\documentclass[11pt,reqno]{amsart}
 \usepackage[foot]{amsaddr}
\usepackage[cm]{fullpage}
\usepackage{mathrsfs,amssymb,graphicx,verbatim,amsmath,amsfonts}
\usepackage{paralist}
\usepackage[breaklinks,pdfstartview=FitH]{hyperref}
\usepackage{upgreek}
\usepackage{tikz}
\usepackage[mathscr]{euscript}
\usepackage[T1]{fontenc}
\usepackage{dutchcal}
\usepackage{fourier}
\usepackage{multirow}
\usepackage{caption}

\renewenvironment{quote}{%
   \list{}{%
     \leftmargin0.597cm   
     \rightmargin\leftmargin
   }
   \item\relax
}
{\endlist}

\addtolength{\footskip}{17pt}

\renewcommand{\le}{\leqslant}
\renewcommand{\ge}{\geqslant}
\renewcommand{\leq}{\leqslant}
\renewcommand{\geq}{\geqslant}
\renewcommand{\setminus}{\smallsetminus}
\allowdisplaybreaks
\newcommand{\ud}[0]{\,\mathrm{d}}
\usepackage{esint}
\usepackage[autostyle]{csquotes}

\newcommand{\n}{\{1,\ldots,n\}}

\renewcommand{\d}{\delta}

\newcommand{\e}{\varepsilon}
\newcommand{\R}{\mathbb R}

\newtheorem{theorem}{Theorem}
\newtheorem{lemma}[theorem]{Lemma}
\newtheorem{proposition}[theorem]{Proposition}

\newtheorem{corollary}[theorem]{Corollary}

\newcommand{\Id}{\mathsf{Id}}
\theoremstyle{remark}
\newtheorem{remark}[theorem]{Remark}

\newtheorem{question}[theorem]{Question}

\renewcommand{\i}{\mathsf{i}}

\renewcommand{\subset}{\subseteq}
\newcommand{\dist}{\mathrm{dist}}

\newcommand{\N}{\mathbb N}
\newcommand{\Z}{\mathbb Z}

\newcommand{\eqdef}{\stackrel{\mathrm{def}}{=}}

\newcommand{\cc}{\mathsf{c}}

\renewcommand{\emptyset}{\varnothing}

\newcommand{\G}{\mathsf{G}}

\newcommand*\diff{\mathop{}\!\mathrm{d}}

\newcommand{\ms}{\mathscr}
\newcommand{\msf}{\mathsf}

\begin{document}

\title{Nonpositive curvature is not coarsely universal}

\author{Alexandros Eskenazis}
\address{(A.E.) Mathematics Department\\ Princeton University\\ Fine Hall, Washington Road, Princeton, NJ 08544-1000, USA}
\email{ae3@math.princeton.edu}

\author{Manor Mendel}
\address{(M.M.) Mathematics and Computer Science Department\\ The Open University of Israel\\ 1 University Road\\ P.O. Box 808\\ Raanana 43107\\ Israel}
\email{manorme@openu.ac.il}

\author{Assaf Naor}
\address{(A.N.) Mathematics Department\\ Princeton University\\ Fine Hall, Washington Road, Princeton, NJ 08544-1000, USA}
\email{naor@math.princeton.edu}

\thanks{A.E. was supported by BSF grant 2010021 and the Simons Foundation. M.M. was supported by BSF grant 2010021. A.N. was supported by NSF grant CCF-1412958, BSF grant 2010021, the Packard Foundation and the Simons Foundation. This work  was carried out under the auspices of the Simons Algorithms and Geometry (A\&G) Think Tank, and was completed while A.N. was a member of the Institute for Advanced Study.}



\begin{abstract}

We prove that not every  metric space embeds coarsely into an Alexandrov space of nonpositive curvature.
This answers a question of Gromov (1993) and is in contrast to the fact that any metric space embeds coarsely into an Alexandrov space of nonnegative curvature, as shown by Andoni, Naor and Neiman (2015). We establish this statement by proving that  a metric space which is $q$-barycentric for some $q\in [1,\infty)$ has metric cotype $q$ with sharp scaling parameter. Our proof utilizes nonlinear (metric space-valued) martingale inequalities and yields sharp bounds even for some classical Banach spaces.
This allows us to evaluate  the  bi-Lipschitz distortion of the $\ell_\infty$ grid $[m]_\infty^n=(\{1,\ldots,m\}^n,\|\cdot\|_\infty)$ into $\ell_q$ for all $q\in (2,\infty)$,  from which we deduce the following discrete converse to the fact that $\ell_\infty^n$ embeds with distortion $O(1)$ into $\ell_q$ for $q=O(\log n)$.
A  rigidity theorem of Ribe (1976) implies that for every $n\in \N$ there exists $m\in \N$ such that if $[m]_\infty^n$ embeds into $\ell_q$ with distortion $O(1)$, then $q$ is necessarily at least a universal constant multiple of $\log n$. Ribe's theorem does not give an explicit upper bound on this $m$, but by the work of Bourgain  (1987) it suffices to take $m=n$, and this was the previously best-known estimate for $m$. We show that the above discretization statement actually holds when $m$ is a universal constant.
\end{abstract}

\maketitle

\setcounter{tocdepth}{5}

\section{Introduction}

A complete geodesic metric space $(X,d_X)$ is  an Alexandrov space of nonpositive curvature if for  any quadruple of points $x,y,z,w\in X$ such that $w$ is a metric midpoint of $x$ and $y$, i.e., $d_X(w,x)=d_X(w,y)=\frac12d_X(x,y)$, we have
\begin{equation}\label{eq:def cat0}
d_X(z,w)^2+\frac14 d_X(x,y)^2 \le \frac12 d_X(z,x)^2+\frac12 d_X(z,y)^2.
\end{equation}
If the reverse inequality to~\eqref{eq:def cat0} holds true for any such quadruple $x,y,z,w\in X$, then $X$ is  an Alexandrov space of nonnegative curvature.\footnote{Strictly speaking, the above definition is of Alexandrov spaces of {\em global} nonpositive curvature, also known as $\mathrm{CAT}(0)$ spaces or Hadamard spaces. See~\cite{BH99} for the local counterpart of this definition; we will not treat it here, and therefore it will be convenient to drop the term "global" throughout the present text, because the ensuing results are vacuously false under the weaker local assumption (since the $1$-dimensional simplicial complex that is associated to any connected combinatorial graph is an Alexandrov space of local nonpositive curvature). For nonnegative curvature, the local and global notions coincide due to  the (metric version of the) Alexandrov--Toponogov theorem; see e.g.~\cite{Pla02}.}
See e.g.~\cite{BGP92,Per95,BH99,Stu99,BBI01,Stu03,Gro07} for  more on these fundamental notions.

A metric space $(Y,d_Y)$  is said to embed coarsely into a metric space $(X,d_X)$ if there exist two nondecreasing moduli $\omega,\Omega:[0,\infty)\to [0,\infty)$ satisfying $\omega\le \Omega$ pointwise and $\lim_{t\to\infty}\omega(t)=\infty$, and a mapping $f:Y\to X$ such that
\begin{equation}\label{eq:def coarse}
\forall\, x,y\in Y,\qquad \omega\big(d_Y(x,y)\big)\le d_X\big(f(x),f(y)\big)\le \Omega\big(d_Y(x,y)\big).
\end{equation}
A mapping $f:Y\to X$ that satisfies~\eqref{eq:def coarse} is called  a {coarse embedding} (with moduli $\omega,\Omega$).

The notion of a coarse embedding was introduced by Gromov in~\cite[\S4]{Gro88}, where such an embedding was called a "placement," and further studied by him in~\cite[Section~7.E]{Gro93}, where such an embedding was called a "uniform embedding." The subsequent change to the currently commonly used term "coarse embedding" is due to the need to avoid conflict with prior terminology in the functional analysis literature; see e.g.~the explanation in~\cite{BL00,Roe03,NY12,Ost13}.

As a special case of a more general result that will be described later, we will prove here the following theorem.

\begin{theorem}\label{thm:cat0 special case}
There is a metric space $Y$ that does not embed coarsely into any nonpositively curved Alexandrov space $X$.
\end{theorem}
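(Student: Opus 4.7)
The plan is to deduce Theorem~\ref{thm:cat0 special case} from the paper's main theorem (announced in the abstract) that every $q$-barycentric metric space has metric cotype $q$ with sharp scaling. Three linked steps are needed: (a) observe that every Alexandrov space of nonpositive curvature is $2$-barycentric; (b) apply the main theorem with $q=2$; (c) extract a coarse non-embedding from sharp metric cotype $2$ via the standard Mendel--Naor obstruction.

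For step (a), note that~\eqref{eq:def cat0} is exactly the midpoint convexity of $z\mapsto d_X(z,\cdot)^2$, and Sturm's variational construction then yields a unique Fr\'echet barycenter $b(\mu)$ for every finitely supported probability measure $\mu$ on $X$ satisfying the variance inequality
\[
\int_X d_X\big(b(\mu),x\big)^2\ud\mu(x)+d_X\big(z,b(\mu)\big)^2\leq \int_X d_X(z,x)^2\ud\mu(x),\qquad \forall\,z\in X,
\]
which is precisely the $2$-barycentric property in the sense used throughout this paper. Step (b) is then immediate from the main result. For step (c), I would take $Y$ to be a suitably scaled and separated disjoint union of the discrete $\ell_\infty$ grids $\{[m_n]_\infty^n\}_{n\geq 1}$ with $m_n\asymp\sqrt n$. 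Assume for contradiction that $f:Y\to X$ is a coarse embedding with moduli $\omega\le \Omega$ into some nonpositively curved Alexandrov space $X$. Restricting $f$ to the $n$-th grid and feeding it into the sharp metric cotype $2$ inequality provided by step (b), a Mendel--Naor-type averaging argument yields an estimate of the form $\omega(1)\lesssim \Omega(\sqrt n)/\sqrt n$; tuning the inter-component distances in $Y$ so that bi-Lipschitz rigidity of the grids forces $\Omega(\sqrt n)$ to grow sub-linearly in $\sqrt n$, one contradicts $\lim_{t\to\infty}\omega(t)=\infty$.

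The main obstacle is step (b): sharpness of the scaling parameter $m\asymp n^{1/q}$ in metric cotype for barycentric spaces. Metric cotype $q$ without sharp scaling has been known for CAT(0) through various prior routes (Markov type $2$, quadratic Poincar\'e inequalities), but sharpness is precisely what distinguishes metric cotype of the strength enjoyed by Hilbert space from merely having some nontrivial metric cotype, and it is only under the sharp scaling that the averaging step in (c) actually forces a contradiction rather than a vacuous inequality. Achieving it requires the nonlinear metric-space-valued martingale inequalities announced in the abstract, which constitute the technical heart of the paper; steps (a) and (c) are, by contrast, respectively classical and a routine adaptation of the Mendel--Naor framework.
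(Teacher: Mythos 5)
Your overall architecture coincides with the paper's: step (a) is the known fact, cited in the paper via~\cite{LPS00,Stu03}, that every Alexandrov space of nonpositive curvature is $2$-barycentric with constant $1$; step (b) is Theorem~\ref{thm:main}; and step (c) is the Mendel--Naor obstruction, which the paper carries out in Proposition~\ref{prop:CUQ}. Steps (a) and (b) are fine, but your execution of (c) contains a genuine gap. The inequality you extract, $\omega(1)\lesssim \Omega(\sqrt n)/\sqrt n$, is not what sharp metric cotype gives, and it evaluates the two moduli at the wrong scales. In the correct argument the upper modulus $\Omega$ is applied only at distances that remain bounded as $n\to\infty$ (the diagonal steps $x\mapsto x+\e$ with $\e\in\{-1,1\}^n$, which have length $O(1)$ in the grid, respectively length at most $\pi$ after the discretized-circle substitution used in the paper), while the lower modulus $\omega$ is applied to the antipodal pairs $x,x+me_i$, whose distance is $\asymp m\asymp\sqrt n\to\infty$. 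With the sharp scaling $m\asymp\sqrt n$, the factor $\Gamma m$ on the right-hand side of~\eqref{eq:cotype} is $\asymp n^{1/2}$ and cancels the $n^{1/2}$ produced by the sum over $i\in\{1,\ldots,n\}$, so one arrives at $\omega(c\sqrt n)\lesssim \Omega(C)$ for universal constants $c,C$ (the analogue of~\eqref{eq:omega bounded}); since the right-hand side is a single fixed number, this already contradicts $\lim_{t\to\infty}\omega(t)=\infty$, with no further input needed.

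By contrast, your proposed repair --- ``tuning the inter-component distances in $Y$ so that bi-Lipschitz rigidity of the grids forces $\Omega(\sqrt n)$ to grow sub-linearly in $\sqrt n$'' --- cannot work: $\Omega$ belongs to the hypothetical embedding, not to the construction of $Y$, and no choice of $Y$ bounds it from above. Worse, each grid $[m_n]_\infty^n$ is $1$-discrete and quasi-geodesic (any two points are joined by a chain of unit steps), so any coarse embedding automatically satisfies an affine upper bound $d_X(f(x),f(y))\le \Omega(1)\big(\|x-y\|_\infty+1\big)$ on each component; thus the best one can say is that $\Omega(\sqrt n)$ may be taken \emph{linear} in $\sqrt n$, at which point your displayed inequality reduces to $\omega(1)\lesssim\Omega(1)$ and yields nothing. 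A further small point your sketch elides: the cotype inequality lives on the torus $\Z_{2m}^n$, so to run it through $\ell_\infty$ grids one must first pass through the $O(1)$-distortion embedding of $\Z_{2m}^n$ into $[m+1]_\infty^{2n}$ (\cite[Lemma~6.12]{MN08}, used in the paper's proof of Corollary~\ref{cor:grid dist}); alternatively one can take $Y$ to be a disjoint union of discretized tori, as in~\eqref{eq:disjoint union}, and dispense with the grids. With step (c) replaced by the argument of Proposition~\ref{prop:CUQ}, your plan becomes exactly the paper's proof.
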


Theorem~\ref{thm:cat0 special case} is the first time that the mere existence of such a metric space $Y$ is established (thus addressing a longstanding question of Gromov~\cite{Gro93}; see below), but we will actually see  that one could take here $Y=\ell_p$ for any $p>2$.

It follows from~\cite{ANN16} that the statement of Theorem~\ref{thm:cat0 special case} is false if one replaces in it the term "nonpositively curved" by "nonnegatively curved." Namely, by~\cite{ANN16} {every} metric space embeds coarsely into some nonnegatively curved Alexandrov space. This  difference between the coarse implications of the "sign" of curvature is discussed further in Section~\ref{sec:hierarchy}  below, where we  also explain how our work answers other open questions that were posed in~\cite{ANN16}. A conceptual contribution  that underlies Theorem~\ref{thm:cat0 special case}  is to specify an invariant which is preserved under  embeddings that may incur very large distortion, such that  this invariant holds when the curvature is nonpositive yet it does not follow from nonnegative curvature. Prior to this, nonnegative curvature was observed to be "better behaved" than nonpositive curvature, in the sense that  all such invariants that were previously computed for Alexandrov spaces   held/failed equally well in the presence of either nonpositive or nonnegative curvature (for different reasons), or they held for spaces of nonnegative curvature and not necessarily for spaces of nonpositive curvature; see Section~\ref{sec:table} below.

The geometric faithfulness that definition~\eqref{eq:def coarse} of a coarse embedding imposes is  weak (much more so than, say, that of a bi-Lipschitz embedding), but it nevertheless has strong implications in topology, $K$-theory and group theory; see e.g.~\cite{Gro93,FRR95,Yu00,GHW05,Yu06,NY12,Ost13}. The fact that the requirement~\eqref{eq:def coarse} is not stringent implies that large classes of metric spaces admit a coarse embedding into "nice" metric spaces (see~\cite{GY00,CCJV01,BG05,GHW05,Yu05} for examples of theorems of this type), and it  raises the question of finding invariants that  serve as obstructions  to the existence of coarse embeddings.

The above (open-ended) question was first raised in~\cite[page~218; Remark (b)]{Gro93}, where it is stated that

\smallskip

\begin{quote}{  \em "There is no known geometric obstruction for uniform embeddings into infinite dimensional spaces. In particular, it is unclear whether every separable metric space can be uniformly embedded into the Hilbert space $\R^\infty$."

\begin{flushright}{\em M.~Gromov, 1993.}\end{flushright}
}\end{quote}
\smallskip
Here, the term "infinite dimensional" alludes  to the fact that Gromov's investigations in~\cite{Gro88,Gro93} dealt with coarse embeddings into nonpositively curved spaces which exhibit a certain (appropriately defined)  finite dimensionality. In that setting, he indeed found examples of the sought-after obstructions (e.g.~relying on a coarse notion of dimension, or proving that the rank of a symmetric space serves as an obstruction for coarse embeddings of one symmetric space into another). The particular case of  embedding into a Hilbert space that is mentioned separately in the above quote is clearly the natural place to start, but it is also important since coarse embeddings  into a  Hilbert space have profound topological and $K$-theoretical implications, as conjectured later by Gromov~\cite{FRR95} and proven by Yu~\cite{Yu00}. Theorem~\ref{thm:cat0 special case} answers the  above question in the  setting in which it was originally posed, namely  ruling out the existence of  a coarse embedding into an arbitrary Alexandrov space of nonpositive curvature.

Gromov's  question influenced the development of several approaches which resolve its  Hilbertian  case (and variants for  embeddings into certain Banach spaces, partially motivated by an application in~\cite{KY06}), starting with the initial solution by Dranishnikov, Gong, Lafforgue, and Yu~\cite{DGLY02},  who were the first to prove that there exists a metric space which does not embed coarsely into a Hilbert space. The main threads in this line of research are

\begin{enumerate}
\item Dranishnikov, Gong, Lafforgue, and Yu~\cite{DGLY02} were the first to answer the Hilbertian case of Gromov's question, by adapting a classical argument of Enflo~\cite{Enf69-smirnov} based on an invariant which he called {\em generalized roundness}.
\item Gromov himself found~\cite{Gro00,Gro03} a different solution, showing that a metric space which contains any sequence of arbitrarily large bounded degree {\em expander graphs} fails to admit a coarse embedding into a Hilbert space.
\item Pestov~\cite{Pes08} proved that the universal Urysohn metric space $\mathbb{U}$ does not admit a coarse embedding into any uniformly convex Banach space, relying on works of Hrushovski~\cite{Hru92}, Solecki~\cite{Sol05} and Vershik~\cite{Ver08}.

\item A Fourier-analytic argument of Khot and Naor~\cite{KN06}  shows that for every $n\in \N$ one can choose a lattice $\Lambda_n\subset \R^n$ such that any metric space that contains the flat tori $\{\R^n/\Lambda_n\}_{n=1}^\infty$ does not embed coarsely into a Hilbert space.

\item  Johnson and Randrianarivony~\cite{JR06} proved that if $p>2$, then $\ell_p$ does not embed coarsely into a Hilbert space, by building on a method of Aharoni, Maurey and Mityagin~\cite{AMM85}. Randrianarivony~\cite{Ran06} proceeded to use this approach to characterize the Banach spaces that admit a coarse embedding into a Hilbert space as those that are linearly isomorphic to a closed subspace of an $L_0(\mu)$ space.
\item Mendel and Naor introduced~\cite{MN08}  an invariant called {\em metric cotype} and showed that it yields an obstruction to coarse embeddings provided that an auxiliary quantity called the "scaling parameter" has a sharp asymptotic behavior; all of the relevant terminology will be recalled in Section~\ref{sec:cotype} below, since this is the strategy of our proof of Theorem~\ref{thm:cat0 special case}. In~\cite{MN08}, such a sharp metric cotype inequality was established for $K$-convex Banach spaces via a vector-valued Fourier-analytic argument; here we take a different route in lieu of Fourier analysis,  due to its unavailability for functions that take values in metric spaces that are not Banach spaces.

\item Kalton investigated coarse embeddings in~\cite{Kal07} where, building in part on classical work of Raynaud~\cite{Ray83}, he introduced an invariant called {\em Kalton's Property $\mathcal{Q}$}, and used it to show that certain Banach spaces (including notably the space $c_0$ of null sequences, the James quasi-reflexive space~\cite{Jam50,Jam51}, and non-reflexive uniformly non-octahedral spaces~\cite{Jam74,JL75,Jam78,PX87}) do not admit a coarse embedding into any reflexive Banach space. Property $\mathcal{Q}$ can also be used to rule out the existence of coarse embeddings of certain Banach spaces into the Schatten--von Neumann trace class $\mathsf{S}_1$; see~\cite[page~172]{GMN11}. In the same work~\cite{Kal07}, Kalton considered a sequence $\{\mathsf{K}_r(\N)\}_{r=1}^\infty$ of infinite connected combinatorial graphs (equipped with their shortest-path metric), which are  called {\em Kalton's interlacing graphs}, and proved that any metric space which contains  $\{\mathsf{K}_r(\N)\}_{r=1}^\infty$ does not admit a coarse embedding into any {\em stable} metric space, hence a fortiori into a Hilbert space (here, the notion of stability of a metric space is in the sense of Garling's definition~\cite{Gar82} which builds on that of Krivine and Maurey~\cite{KM81}; see the survey~\cite{Bau14}, and we will return to this matter in Section~\ref{sec:open} below). This  circle of ideas has been substantially developed in several directions~\cite{KR08,Kal11,BLS18,LPP18,BLMS18}, leading in particular to coarse non-embeddability results also into some  non-classical Banach spaces, including the Tsirelson space~\cite{Tsi74}, the James quasi-reflexive space, and spaces that are reflexive and asymptotically $c_0$~\cite{MMT95}.
\item Lafforgue~\cite{Laf08,Laf09} and  Mendel and Naor~\cite{MN14,MN15} constructed a special type of (a sequence of)  expander graphs (namely, expanders relative to certain metric spaces, and even  {\em super-expanders}), which, via a straightforward generalization of the aforementioned argument of Gromov~\cite{Gro00,Gro03}, do not admit a coarse embedding into families of metric spaces that include Hilbert spaces but are much richer; we will describe this approach later as it relates to important open problems that pertain to a potential strengthening of Theorem~\ref{thm:cat0 special case}.
\item Arzhantseva and Tessera~\cite{AT15} introduced the notion of {\em relative expander graphs}, which is a weakening of the classical notion of expander graph, and showed that a metric space that contains any sequence of relative expanders does not admit a coarse embedding into a Hilbert space. They exhibited examples of such spaces into which no sequence of expander graph embeds coarsely. We will elaborate on this topic in Section~\ref{sec:open} below.
\item    Naor and Schechtman~\cite{NS16-KS} introduced an invariant called a {\em metric $\mathrm{KS}$ inequality}, evaluated it for some spaces (including $\ell_2$), and showed that it is an obstruction to coarse embeddings of powers of hypercubes.
\end{enumerate}

Section~\ref{sec:open} below returns to the above list of previous results/methods, explaining why they do not yield Theorem~\ref{thm:cat0 special case}. Briefly,  in some cases this is so because one can construct an Alexandrov space of nonpositive curvature for which the given approach fails. Some of the other methods in the above list rely so heavily on linear properties of the underlying Banach space that we do not see how to interpret those approaches when the target space is not a Banach space. And, for a couple of items in the above list, their possible applicability to the setting of Theorem~\ref{thm:cat0 special case} requires the (positive) solution of an open question that is of interest beyond its potential  use as a different route to Theorem~\ref{thm:cat0 special case}.

In the special case of embeddings into a simply connected Riemannian manifold $(M,d_M)$ of nonpositive sectional curvature, Gromov's question is (implicitly) answered by an argument of Wang~\cite{Wan98}, which implies that a metric space which contains any sequence of arbitrarily large bounded degree expander graphs fails to admit a coarse embedding into $(M,d_M)$; this also follows from the work of Izeki and  Nayatani~\cite{IN05}, and an explicit exposition of the proof appears in~\cite[pages~1159-1160]{NS11}. The assumption that $(M,d_M)$ is a Riemannian manifold can be relaxed to a technical requirement which restricts its possible singularities; in~\cite[Remark~1.2.C(b)]{Gro03} Gromov calls such Alexandrov spaces of nonpositive curvature "$\mathrm{CAT}(0)$ spaces with bounded singularities." Following this approach, several authors studied~\cite{IN05,Toy10,NS11,FT12,Nao14,Toy16} special cases of spaces $X$ for which the conclusion Theorem~\ref{thm:cat0 special case} holds true;  to indicate just one of several such examples that are available in the literature, Fujiwara and Toyoda~\cite[Corollary~1.9]{FT12} proved that this is so when $X$ is an arbitrary $\mathrm{CAT}(0)$ cube complex. However, in~\cite[page~187]{Gro01} and~\cite[page~117]{Gro03} Gromov himself proposed a construction of Alexandrov spaces of nonpositive curvature for which his approach fails, and this was carried out by Kondo~\cite{Kon12}. By~\cite{MN15},  there are metric spaces (namely, those that contain a  specially-constructed sequence of bounded degree expander graphs) that do not embed coarsely into most such "Gromov--Kondo spaces," but the proof of this statement in~\cite{MN15} relies on particular properties of the specific construction.


\subsection{Barycentric metric spaces} For a set $\Omega$, let $\ms{P}_{\Omega}^{<\infty}$ be the space of all finitely supported probability measures on $\Omega$. A function $\mathfrak{B}:\ms{P}_\Omega^{<\infty}\to \Omega$ is said to be a barycenter map if $\mathfrak{B}(\delta_x)=x$ for every $x\in \Omega$, where $\d_x$ is the point mass at $x$.

Following the terminology of~\cite{MN13}, if $q\in [1,\infty)$, then a metric space $(X,d_X)$ is said to be $q$-barycentric with constant $\beta\in (0,\infty)$ if there exists a barycenter map $\mathfrak{B}:\ms{P}_X^{<\infty}\to X$ such that every $x\in X$ and $\mu\in\ms{P}_X^{<\infty}$ satisfy
\begin{equation} \label{eq:pbarycentric}
d_X\big(\mathfrak{B}(\mu),x\big)^q + \frac{1}{\beta^q} \int_X d_X\big(\mathfrak{B}(\mu),y\big)^q \diff\mu(y) \leq \int_X d_X(x,y)^q\diff\mu(y).
\end{equation}
A metric space is said to be barycentric if it is $q$-barycentric for some $q\in [1,\infty)$. We imposed the restriction $q\ge 1$ above because it is mandated by the barycentric requirement~\eqref{eq:pbarycentric} (unless $X$ is a singleton). Moreover, if $(X,d_X)$ contains a geodesic segment, then~\eqref{eq:pbarycentric} implies that $q\ge 2$, though barycentric metric spaces need not necessarily contain any geodesic segment. See Section~\ref{sec:prelim}  below for a quick justification of these facts and further discussion.

It is well known (see e.g.~\cite[Lemma~4.1]{LPS00} or~\cite[Theorem~6.3]{Stu03}) that any Alexandrov space of nonpositive curvature is $2$-barycentric with constant $\beta=1$. Hence,  Theorem~\ref{thm:cat0 special case} is a special case of the following theorem.

\begin{theorem}\label{thm:main bary}
There exists a metric space $Z$ that does not embed coarsely into any barycentric metric space.
\end{theorem}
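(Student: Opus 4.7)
The plan is to identify a metric invariant shared by all barycentric spaces---sharp metric cotype in the sense of Mendel--Naor~\cite{MN08}---and to exhibit a single metric space $Z$ whose discrete-grid subsets violate this invariant at an infinite sequence of scales. Concretely, I would take $Z$ to be any metric space that contains isometric copies of the $\ell_\infty$-grids $[m]_\infty^n$ for all $m,n\in\N$; for definiteness one may take $Z = c_0$, since $[m]_\infty^n$ sits isometrically inside the first $n$ coordinates of $c_0$.

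The main technical step is to prove the following sharp metric cotype theorem: for every $q \in [1,\infty)$ and $\beta \in (0,\infty)$ there exists $\Gamma = \Gamma(q,\beta)$ such that every $q$-barycentric space $(X,d_X)$ with constant $\beta$ satisfies, for every $n \in \N$, every integer $m \geq \Gamma n^{1/q}$ divisible by $4$, and every $f : \Z_m^n \to X$,
\[
\sum_{j=1}^n \Avg_{x \in \Z_m^n}\bigl[d_X\big(f(x + \tfrac{m}{2}e_j), f(x)\big)^q\bigr] \leq \Gamma^q m^q \Avg_{x,\e}\bigl[d_X\big(f(x+\e), f(x)\big)^q\bigr],
\]
with $\e$ uniform on $\{-1,0,1\}^n$. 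The method is the nonlinear martingale approach foreshadowed in the abstract. Given $f$, I would construct a martingale-like sequence in $X$ by taking barycenters of conditional distributions along a filtration adapted to a lazy random walk (or discrete heat flow) on $\Z_m^n$. A single application of~\eqref{eq:pbarycentric} at each step of the filtration delivers a $q$-variance bound which plays the role of the Doob square-function estimate for martingales in $K$-convex Banach spaces. Telescoping yields a metric Pisier-type martingale cotype inequality in $X$; coupling it with a mixing estimate for the random walk on $\Z_m^n$ upgrades this to a metric cotype inequality with the crucial scaling $m \asymp n^{1/q}$.

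With sharp metric cotype in hand, Theorem~\ref{thm:main bary} follows by a standard obstruction argument. Assume for contradiction that $f : Z \to X$ is a coarse embedding into a $q$-barycentric space with moduli $\omega \leq \Omega$ and $\omega(t)\to\infty$. For each large $n$, set $m = \lceil \Gamma n^{1/q} \rceil$ (adjusted to be divisible by $4$), restrict $f$ to a subset of $Z$ isometric to $[2m]_\infty^n$, and push it forward to a map $g : \Z_{4m}^n \to X$ via reflection across the boundary so that distances are preserved up to a factor of~$2$. Applying the metric cotype inequality to $g$ and using the lower bound $d_X(g(x+2m\, e_j), g(x)) \geq \omega(2m)$ together with the upper bound $d_X(g(x+\e), g(x)) \leq \Omega(2)$ produces
\[
n \cdot \omega(2m)^q \leq C(q,\beta)\, m^q\, \Omega(2)^q.
\]
Since $m \asymp n^{1/q}$, this forces $\omega(2m)$ to be bounded uniformly in $n$, contradicting $\omega(t)\to\infty$ as $m \to \infty$.

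The main obstacle is the proof of the sharp metric cotype inequality for $q$-barycentric spaces. The available proofs of metric cotype for $K$-convex Banach spaces rely on Fourier analysis on $\Z_m^n$, which is unavailable for maps into a general metric space, so the required substitute is an intrinsically nonlinear argument that iterates~\eqref{eq:pbarycentric} along a carefully chosen filtration. The subtle point is to match the sharp scaling $m \asymp n^{1/q}$ (rather than the trivial $m \asymp n$ that a diameter-based Lipschitz extension would yield), which requires carefully balancing the $q$-variance decrement coming from~\eqref{eq:pbarycentric} against the mixing time of the underlying random walk. This sharp scaling is precisely what yields coarse---as opposed to merely bi-Lipschitz---non-embeddability.
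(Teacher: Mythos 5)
Your high-level architecture is exactly the paper's: identify sharp metric cotype as the invariant enjoyed by $q$-barycentric spaces, establish it via a nonlinear Pisier-type martingale cotype inequality, and run the Mendel--Naor obstruction argument to rule out coarse embeddings. The choice $Z=c_0$ is a legitimate substitute for the paper's $\ell_p$ (the paper's concrete witness is the disjoint union of $\ell_3^{m^2}$-grids in \eqref{eq:disjoint union}; any space containing $\ell_\infty$-grids with $O(1)$ distortion works, by the same argument). So the plan is correct in broad outline. There are, however, two concrete gaps.

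First, the reduction from $[2m]_\infty^n$ to $\Z_{4m}^n$ ``via reflection across the boundary'' does not preserve distances up to a factor of $2$. Coordinate-wise folding $k\mapsto\min(k,4m-k)$ collapses antipodal pairs: for instance $(m,0,\ldots,0)$ and $(3m,0,\ldots,0)$ lie at $\Z_{4m}^n$-distance $2m$ but are mapped to the same point of $[2m]_\infty^n$, and these are precisely the pairs $(x,x+2me_j)$ that the cotype inequality needs to keep separated. The standard fix is to double the dimension, encoding each $\Z_{4m}$-coordinate by two $[2m]$-coordinates whose folds are offset by a quarter period so that they cover for each other; this is \cite[Lemma~6.12]{MN08}, which the paper invokes in the proof of Corollary~\ref{cor:grid dist}. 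Alternatively, for the purposes of Theorem~\ref{thm:main bary} one can bypass the grid entirely and embed $\Z_{2m}^n$ into $\ell_p^n$ trigonometrically, $x\mapsto n^{-1/p}m\,(e^{\pi\i x_1/m},\ldots,e^{\pi\i x_n/m})$, as in the paper's proof of Proposition~\ref{prop:CUQ}; this is cleaner and avoids the factor-$2$ slack.

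Second, and more substantively, the mechanism you propose for the sharp cotype inequality --- a filtration adapted to a lazy random walk on $\Z_m^n$, combined with a ``mixing estimate'' to produce the scaling $m\asymp n^{1/q}$ --- is not what makes the paper's proof work, and it is unclear that it could be made to work as stated. The paper's martingale lives on $\{-1,1\}^n$, not on $\Z_m^n$: one freezes a base point $x\in\Z_{4m}^n$ and takes the Doob martingale $\{\mathsf{E}_i f_x\}_{i=0}^n$ of $\e\mapsto f(x+\e)$ with respect to the coordinate filtration of the cube. There is no mixing estimate anywhere. The factor $m$ comes from a length-$m$ telescoping along translates of $x$ by multiples of $2e_i$, and the factor $n^{1/q}$ comes from summing $n$ uniformly controlled $q$th powers. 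The linchpin that couples the cube martingale to the torus translations is the identity $\mathsf{E}_i f_{x-2\e_je_j}(\e)=\mathsf{E}_if_x(\e_1,\ldots,-\e_j,\ldots,\e_i)$ (Lemma~\ref{lem:identityEi}), which lets a shift by $\pm 2e_j$ in $\Z_{4m}^n$ be read off as a coordinate flip in $\{-1,1\}^n$, exactly where Pisier's martingale cotype inequality (Proposition~\ref{prop:pisier}) can be applied. This identity is the nonlinear replacement for the Fourier-side cancellation in the $K$-convex argument of \cite{MN08}, and it is the key idea your sketch is missing; without it, I do not see how ``balancing the $q$-variance decrement against the mixing time'' produces the required scaling.
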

In fact, we will establish the following more refined version of Theorem~\ref{thm:main bary}.


\begin{theorem}\label{thm:main q-bary} If $p,q\in [1,\infty]$ satisfy $p>q$, then  $\ell_p$ does not embed coarsely into any $q$-barycentric metric space.
\end{theorem}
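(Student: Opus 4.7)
The plan is to implement the Mendel--Naor program from~\cite{MN08}: first prove that every $q$-barycentric metric space $(X, d_X)$ with constant $\beta$ satisfies a metric cotype $q$ inequality on the discrete torus $\Z_m^n$ with \emph{sharp} scaling parameter $m \asymp n^{1/q}$, and then run the standard scaling argument against a coarse embedding of $\ell_p$ for $p > q$. The Fourier-analytic proof of sharp metric cotype for $K$-convex Banach spaces in~\cite{MN08} does not survive when the target is a general metric space, so it must be replaced by a nonlinear martingale argument that uses only the barycentric axiom~\eqref{eq:pbarycentric}. Given a filtration $\F_0 \subset \cdots \subset \F_N$ on a probability space (with $\F_0$ trivial) and a simple $f: \Omega \to X$, define the backward barycentric process $M_N := f$ and, on each $\F_{k-1}$-atom $B$, $M_{k-1}|_B := \mathfrak{B}(\nu_k^B)$, where $\nu_k^B$ is the conditional law of $M_k$ given $B$. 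Applying~\eqref{eq:pbarycentric} on $B$ with $\mu = \nu_k^B$ and $x = M_0$, then integrating and telescoping over $k$, gives the master martingale-cotype inequality
\begin{equation*}
\sum_{k=1}^N \mathbb{E}\!\left[d_X(M_{k-1}, M_k)^q\right] \le \beta^q\, \mathbb{E}\!\left[d_X(M_0, f)^q\right] \le (2\beta)^q \inf_{y \in X} \mathbb{E}\!\left[d_X(y, f)^q\right],
\end{equation*}
the last step using a second application of~\eqref{eq:pbarycentric} (iterated along the filtration) to compare $M_0$ to the true $L_q$-center of $f$.

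The next step converts this martingale inequality into the desired sharp metric cotype inequality
\begin{equation*}
\sum_{j=1}^n \mathbb{E}_x\!\left[d_X\!\big(f(x + \tfrac{m}{2} e_j), f(x)\big)^q\right] \le C(\beta, q)\, m^q\, \mathbb{E}_{x, \varepsilon}\!\left[d_X\!\big(f(x + \varepsilon), f(x)\big)^q\right]
\end{equation*}
for all $f: \Z_m^n \to X$ and all even $m \ge c\, n^{1/q}$, with $\varepsilon$ uniform on $\{-1, 0, 1\}^n$. I would run a lazy random walk $(W_t)_{t=0}^T$ on $\Z_m^n$ for $T \asymp n m^2$ steps---long enough that each coordinate mixes, while each increment remains a unit nearest-neighbor step---and feed the backward barycentric process associated to the $X$-valued sequence $f(W_t)$ into the master inequality. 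The left-hand side (the sum of coordinate midpoint displacements) is recovered by a coupling between the walk and its $\tfrac{m}{2} e_j$-translate, in analogy with Pisier's $K$-convex martingale calculation but now entirely metric-space-valued. Balancing the per-step budget against the midpoint cost forces $m \asymp n^{1/q}$. This is the main technical obstacle: executing the coupling/midpoint identification cleanly without any use of Fourier characters, and verifying that $C(\beta, q)$ is independent of $n$.

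To conclude, suppose $\varphi: \ell_p \to X$ is a coarse embedding with moduli $\omega \le \Omega$. Fix $n \in \N$, set $m = 2 \lceil c\, n^{1/q}/2 \rceil$, and apply the sharp metric cotype inequality to the composition $x \mapsto \varphi(s x)$ under $\Z_m^n \hookrightarrow \ell_p$ (coordinate-wise), at a scale $s > 0$. Since $\|s \varepsilon\|_p \le s\, n^{1/p}$ for $p < \infty$ (and $\|s \varepsilon\|_\infty = s$), while the midpoint step has $\ell_p$-norm $sm/2$,
\begin{equation*}
n\, \omega\!\left(\tfrac{s m}{2}\right)^q \le C(\beta, q)\, m^q\, \Omega(s\, n^{1/p})^q,
\end{equation*}
with $n^{1/p}$ replaced by $1$ if $p = \infty$. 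Choosing $s = n^{-1/p}$ (respectively $s = 1$) reduces this to $\omega(c'\, n^{1/q - 1/p}) \le C''$ uniformly in $n$; since $p > q$, the argument $n^{1/q - 1/p}$ tends to infinity with $n$, contradicting $\lim_{t \to \infty} \omega(t) = \infty$ and completing the proof.
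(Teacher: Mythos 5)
Your ``master martingale-cotype inequality'' is essentially \cite[Lemma~2.1]{MN13}, i.e.\ Proposition~\ref{prop:pisier} of the paper with $x=M_0$, and that part is fine. Where the proposal departs from what actually works is the step converting this martingale inequality into the sharp scaling parameter, and there the gap is genuine. The paper does \emph{not} run a random walk on $\Z_m^n$; it builds the martingale on the discrete cube $\{-1,1\}^n$ only, using iterated conditional barycenters of the translated function $f_x(\e)=f(x+\e)$ with respect to the coordinate filtration. The terminal spread of that martingale is then literally the edge-average $\E_\e\,d_X(f(x+\e),f(x))^q$, so Pisier's inequality directly charges the sum of $n$ martingale increments against the right quantity. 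The midpoint displacement $f(x+2me_i)\to f(x)$ is fed into this via a telescoping over $m$ translations by $2\e_ie_i$, and the crucial combinatorial identity (Lemma~\ref{lem:identityEi}) that turns translation by $-2\e_je_j$ on the torus into a sign flip of the $j$-th coordinate on the cube. Without an analogue of that identity you have no mechanism linking martingale increments to midpoint steps, and the coupling you allude to is left entirely unexecuted -- you correctly flag it as the main obstacle, but it is not a technicality; it is the proof.

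A quick accounting also shows the random-walk route cannot give sharpness even in principle. With a lazy nearest-neighbor walk of length $T\asymp nm^2$ the best one can hope to extract from the martingale bound is: (sum of $T$ increment $q$-th powers) $\le\beta^q\cdot$(dispersion of $f(W_T)$), and the dispersion is in turn at most $T$ times the edge-average (by telescoping the $T$ unit steps through the triangle inequality). Balancing this against the midpoint cost yields a cotype constant growing like $T^{1/q}/m\asymp n^{1/q}$ -- which is exactly the trivial bound, not the sharp one $\Gamma\lesssim\beta$. The point of running the martingale over only $n$ steps of $\{-1,1\}^n$, rather than $nm^2$ steps of the torus walk, is precisely to avoid paying the factor $T$.

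Separately, the concluding reduction has a bug: mapping $\Z_m^n$ into $\ell_p$ ``coordinate-wise'' by $x\mapsto sx$ is not compatible with the torus structure. A function on $\Z_m^n$ must be $m$-periodic in each coordinate, and under your parametrization an edge step $\e_j=1$ at $x_j=m-1$ wraps to $x_j=0$, producing an $\ell_p$ displacement of order $sm$ rather than $s$, so the edge-average is not controlled by $\Omega(sn^{1/p})$. The standard fix (used in the paper's proof of Proposition~\ref{prop:CUQ}) is to embed each cyclic factor via $m$-th roots of unity, $x_j\mapsto n^{-1/p}me^{\pi\i x_j/m}\in\C$, which is genuinely periodic and makes the midpoint and edge displacements behave as $n^{-1/p}m$ and $O(1)$ respectively. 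After that repair, the final scaling argument is exactly the paper's.
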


\subsection{Sharp metric cotype}\label{sec:cotype} Following~\cite{MN08}, a metric space $(X,d_X)$ is said to have metric cotype $q\in(0,\infty)$ with constant $\Gamma\in(0,\infty)$ if for every $n\in\N$ there exists some $m=m(n,q,X)\in\N$ such that every function $f:\Z_{2m}^n\to X$ satisfies
\begin{equation} \label{eq:cotype}
\Bigg(\sum_{i=1}^n \sum_{x\in\Z_{2m}^n} d_X\big( f(x+me_i),f(x)\big)^q\Bigg)^{\frac{1}{q}} \leq \Gamma m \Bigg(\frac{1}{2^n} \sum_{\e\in\{-1,1\}^n} \sum_{x\in\Z_{2m}^n} d_X\big(f(x+\e),f(x)\big)^q\Bigg)^{\frac{1}{q}}.
\end{equation}
In~\eqref{eq:cotype} and in what follows, $\Z_{2m}=\Z/(2m\Z)$ and the additions that appear in the arguments of $f$ are modulo $2m$. Also, $e_1=(1,0,\ldots,0),\ldots,e_n=(0,\ldots,0,1)$ is the standard basis of $\Z_{2m}^n$ (or, with a subsequent slight abuse of notation,  of $\R^n$).

See~\cite{MN08} for an explanation of this terminology, as well as its motivation within an extensive long-term research program called the {\em Ribe program}. Explaining this larger context is beyond the scope of the present article, but accessible introductory surveys are available~\cite{Kal08,Nao12,Bal13,Ost13,God17,Nao18}. It suffices to say here that our work is yet another example of an application of the Ribe program in which insights that are inspired by Banach space theory are used to answer a geometric question about objects which a priori have nothing to do with Banach spaces. As we shall see below, in addition to the aforementioned application, revisiting these matters in the fully nonlinear setting of Alexandrov spaces forced us to find a different approach that led to new  results even for  Banach spaces such as $\ell_q$.

\begin{remark}\label{rem:not quite same def}
The above definition of metric cotype $q$ is not identical to the definition of~\cite{MN08}. The difference is that in~\cite{MN08} the average in the right hand side of~\eqref{eq:cotype} is over the $3^n$ points $\e\in \{-1,0,1\}^n$ rather than over the $2^n$ points $\e\in \{-1,1\}^n$.  In the context of~\cite{MN08} it was more natural to average over the $\ell_\infty^n$ edges $\e\in \{-1,0,1\}^n$, while in the present context an average over the sign vectors $\e\in \{-1,1\}^n$ arises naturally. This nuance is irrelevant for the application to coarse embeddings. Nevertheless, in Section~\ref{rem:equivcotype} below we will prove that the above variant of the definition of metric cotype $q$  in fact coincides with the original definition of~\cite{MN08}.
\end{remark}

As formulated above (and in the literature), the notion of metric cotype $q$ suppresses the value of the so-called "scaling parameter" $m=m(n,q,X)$. Nevertheless, it was shown in~\cite{MN08} that obtaining a good upper bound on $m$ is important for certain applications, including as an obstruction to coarse embeddings. As explained in~\cite[Lemma~2.3]{MN08}, if $(X,d_X)$  is any non-singleton metric space that satisfies~\eqref{eq:cotype}, then necessarily $m\ge \frac{1}{\Gamma}n^{1/q}$. So, say that $(X,d_X)$ has \emph{sharp} metric cotype $q$ if $(X,d_X)$ has metric cotype $q$ with $m,\Gamma$ in~\eqref{eq:cotype} satisfying\footnote{In addition to the usual $"O(\cdot),o(\cdot)"$ asymptotic notation, it will be convenient to use throughout this article the following (also standard) asymptotic notation. Given two quantities $Q,Q'>0$, the notations
$Q\lesssim Q'$ and $Q'\gtrsim Q$ mean that $Q\le CQ'$ for some
universal constant $C>0$. The notation $Q\asymp Q'$
stands for $(Q\lesssim Q') \wedge  (Q'\lesssim Q)$. If  we need to allow for dependence on parameters, we indicate this by subscripts. For example, in the presence of  auxiliary objects (e.g.~numbers or spaces) $\phi,\mathfrak{Z}$, the notation $Q\lesssim_{\phi,\mathfrak{Z}} Q'$ means that $Q\le C(\phi,\mathfrak{Z})Q' $, where $C(\phi,\mathfrak{Z}) >0$ is allowed to depend only on $\phi,\mathfrak{Z}$; similarly for the notations $Q\gtrsim_{\phi,\mathfrak{Z}} Q'$ and $Q\asymp_{\phi,\mathfrak{Z}} Q'$.} the bounds $m\lesssim_{q,X} n^{1/q}$ and $\Gamma\lesssim_{q,X}1$.

Prior to the present work, the literature contained only one theorem which establishes that a certain class of metric spaces has sharp metric cotype $q$. Namely, by~\cite{MN08} this is so for $K$-convex Banach spaces of Rademacher cotype $q$ (see e.g.~\cite{Mau03} for the definitions of the relevant Banach space concepts; we will not use them in the ensuing proofs). The question whether every Banach space of Rademacher cotype $q$ has sharp metric cotype $q$ remains a fundamental open problem~\cite{MN08}. On the other hand, in~\cite[Theorem~1.5]{VW10} it was shown that for some $q\in [1,\infty)$, some classes of metric spaces (including ultrametrics) have metric cotype $q$ but do not have sharp metric cotype $q$; another such example appears in Remark~\ref{rem:r3} below. The following theorem yields a new setting in which sharp metric cotype holds.

\begin{theorem}[$q$-barycentric implies sharp metric cotype $q$] \label{thm:main} Fix $q,\beta\in [1,\infty)$ and let $(X,d_X)$ be a $q$-barycentric metric space with constant $\beta$. Then, for every $n\in\N$ and $m\in 2\N$, every function $f:\Z_{2m}^n\to X$ satisfies
\begin{equation} \label{eq:main}
\Bigg(\sum_{i=1}^n \sum_{x\in\Z_{2m}^n} d_X\big( f(x+me_i),f(x)\big)^q\Bigg)^{\frac{1}{q}} \le \Big(4n^{\frac{1}{q}}+\beta m\Big)\Bigg(\frac{1}{2^n} \sum_{\e\in\{-1,1\}^n} \sum_{x\in\Z_{2m}^n} d_X\big(f(x+\e),f(x)\big)^q\Bigg)^{\frac{1}{q}}.
\end{equation}
In particular, if $m\geq \frac{1}{\beta} n^{1/q}$, then $(X,d_X)$ satisfies the metric cotype $q$ inequality~\eqref{eq:cotype} with constant $\Gamma\lesssim \beta$.
\end{theorem}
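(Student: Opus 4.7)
The plan is to establish~\eqref{eq:main} via a nonlinear martingale / iterated-barycenter argument, bypassing the Fourier-analytic method of~\cite{MN08} which is unavailable for metric space-valued functions. Given $f:\Z_{2m}^n\to X$, define inductively
\[
f_0=f,\qquad f_{k+1}(x)=\mathfrak{B}\bigg(\frac{1}{2^n}\sum_{\varepsilon\in\{-1,1\}^n}\delta_{f_k(x+\varepsilon)}\bigg),
\]
which plays the role of a discrete nonlinear heat semigroup on the hypercube and serves as the metric analogue of a conditional expectation. The fundamental one-step estimate is obtained by feeding the measure $\mu_{k,x}=\frac{1}{2^n}\sum_\varepsilon \delta_{f_k(x+\varepsilon)}$ into the $q$-barycentric inequality~\eqref{eq:pbarycentric} with an arbitrary probe point $p\in X$, yielding
\[
d_X(f_{k+1}(x),p)^q+\frac{1}{\beta^q\cdot 2^n}\sum_{\varepsilon}d_X(f_{k+1}(x),f_k(x+\varepsilon))^q\le \frac{1}{2^n}\sum_\varepsilon d_X(p,f_k(x+\varepsilon))^q.
\]

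Specializing $p=f_k(x)$ and summing over $x$ produces a Dirichlet-energy decrement that telescopes over $k=0,\ldots,m-1$. Combined with an $\ell_q$-Minkowski inequality applied to the pointwise martingale-like increments $\bigl(d_X(f_{k+1}(x),f_k(x))\bigr)_{k,x}$, this should yield
\[
\bigg(\sum_{x}d_X(f(x),f_m(x))^q\bigg)^{1/q}\lesssim \beta m\cdot\bigg(\frac{1}{2^n}\sum_{x,\varepsilon}d_X(f(x),f(x+\varepsilon))^q\bigg)^{1/q},
\]
which is the source of the $\beta m$ contribution in~\eqref{eq:main}. The complementary $4n^{1/q}$ contribution arises from the decomposition
\[
d_X(f(x+me_i),f(x))\le d_X(f(x+me_i),f_m(x+me_i))+d_X(f_m(x+me_i),f_m(x))+d_X(f_m(x),f(x))
\]
followed by summation in $\ell_q(i,x)$; the outer terms feed into the martingale bound above, while the middle "smoothed-diagonal" term is bounded by Minkowski summation over the $n$ coordinates, exploiting the strong $\pm1$-symmetric averaging enjoyed by $f_m$.

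The main technical obstacle I anticipate is obtaining the sharp exponent $n^{1/q}$ rather than $n$ in the middle term: naively, each of the $n$ coordinate directions costs a Minkowski factor of $n^{1/q}$, but one must avoid paying an additional coordinate-wise chaining factor of $n^{1-1/q}$ that would ruin the scaling. Controlling this cleanly requires exploiting the full symmetry of the uniform barycenter on $\{-1,1\}^n$, together with the parity condition $m\in 2\N$, which ensures that length-$m$ walks with $\pm1$-increments can reach $\pm m e_i$ exactly and hence admit a coupling-type argument relating smoothed diagonal differences of $f_m$ back to edge differences of the original $f$. Combining the two contributions via $\ell_q$-Minkowski in $(i,x)$ then produces precisely the coefficient $4n^{1/q}+\beta m$ claimed in~\eqref{eq:main}, and the final assertion about metric cotype $q$ with $\Gamma\lesssim\beta$ follows by taking $m$ to be the smallest even integer with $m\ge \frac{1}{\beta}n^{1/q}$ so that $\beta m\asymp n^{1/q}$ dominates the two terms.
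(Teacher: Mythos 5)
Your proposal replaces the paper's construction by a ``nonlinear heat semigroup'' $f_{k+1}(x)=\mathfrak{B}\big(2^{-n}\sum_\e \d_{f_k(x+\e)}\big)$ iterated $m$ times, and both of its key steps have genuine gaps. First, the telescoping bound $\big(\sum_x d_X(f(x),f_m(x))^q\big)^{1/q}\lesssim \beta m\,\big(2^{-n}\sum_{x,\e}d_X(f(x+\e),f(x))^q\big)^{1/q}$ does not follow from the hypotheses: the one-step estimate you derive from~\eqref{eq:pbarycentric} with $p=f_k(x)$ controls $d_X(f_{k+1}(x),f_k(x))^q$ by the local Dirichlet energy of $f_k$, not of $f$, so to telescope you need the energy of $f_k$ to be dominated by that of $f_0=f$. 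That amounts to a Wasserstein-type nonexpansiveness of the barycenter map, which holds for linear averaging and for $\mathrm{CAT}(0)$ barycenters but is \emph{not} implied by the $q$-barycentric condition~\eqref{eq:pbarycentric}, the only assumption available. Nor can you invoke Proposition~\ref{prop:pisier} to sum the increments with a factor $\beta$: the sequence $\{f_k\}$ is a forward smoothing, not a martingale in the sense of Section~\ref{subsec:martingales} (the defining relation~\eqref{eq:martingale condition} fails, since $f_k$ is not the conditional barycenter of $f_{k+1}$ with respect to any filtration). Second, the ``smoothed-diagonal'' term $d_X\big(f_m(x+me_i),f_m(x)\big)$ is exactly the step where the linear proof uses the representation of the smoothed function as an average over endpoints of an $m$-step $\pm1$ walk, coupled from $x$ and $x+me_i$; for iterated barycenters in a general barycentric space no such representation exists (barycenters of barycenters do not collapse to a single barycenter of the $m$-step distribution), so the ``coupling-type argument'' you invoke has no nonlinear meaning under the stated hypotheses. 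You flag this as the main obstacle but do not resolve it, so the claimed coefficient $4n^{1/q}+\beta m$ is not established.

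For comparison, the paper's proof runs the martingale over the $n$ coordinates of an auxiliary discrete cube rather than over $m$ time steps: for each $x$ it forms the conditional barycenters $\msf{E}_if_x$ of $f_x(\e)=f(x+\e)$ along the coordinate filtration of $\{-1,1\}^n$, splits $d_X(f_{x+2m\e_ie_i}(\e),f_x(\e))$ by the triangle inequality through $\msf{E}_if_{x+2m\e_ie_i}$ and $\msf{E}_if_x$, controls the two outer terms by the monotonicity Lemma~\ref{lem:approximation} (this is the source of the $n^{1/q}$ term), and handles the middle term by chaining $m$ translations by $2\e_ie_i$, using the identity of Lemma~\ref{lem:identityEi} to convert each translation into a sign flip of the $i$-th coordinate of $\msf{E}_if_x$, bounding the flip by twice a martingale increment, and then applying Pisier's inequality (Proposition~\ref{prop:pisier}) to the genuine martingale $\{\msf{E}_if_x\}_{i=0}^n$ — this is where the factor $\beta m$ legitimately appears. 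If you wish to salvage your route, you would need to either add a barycenter-contractivity hypothesis (thereby proving a weaker theorem) or find a substitute for the walk-coupling step; as written, the argument does not prove~\eqref{eq:main}.
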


\begin{remark} Beyond metric cotype, dimension-dependent scaling parameters occur (for conceptually distinct reasons) in other metric inequalities that arise in the Ribe program. Determining their asymptotically sharp values  is a major difficulty that pertains to important open problems; see~\cite{MN07,GN10,VW10,GMN11,NS16,Nao16,NS18,EMN18}. The currently best-known general bound~\cite{GMN11}   for the metric cotype $q$ scaling parameter of Banach spaces of Rademacher cotype $q$ is $m\lesssim n^{1+1/q}$.
\end{remark}

The deduction of Theorem~\ref{thm:main q-bary} (hence also its special cases Theorem~\ref{thm:main bary} and Theorem~\ref{thm:cat0 special case}) from Theorem~\ref{thm:main} follows from  an argument of~\cite{MN08}; for completeness, we will provide this derivation in Section~\ref{sec:deduce coarse} below. Beyond this  application, it turns out that Theorem~\ref{thm:main} sometimes yields new information even when the underlying metric space is a Banach space. To explain this, fix $q\in [2,\infty)$ and $\mathscr{K}\in [1,\infty)$. Following the terminology of~\cite{BCL94} (itself inspired by~\cite{Pis75,Fig76}; see also the treatment in~\cite{Pis16}), a Banach space $(X,\|\cdot\|_X)$ is said to be $q$-uniformly convex with constant $\mathscr{K}\in[1,\infty)$ if
\begin{equation} \label{eq:pconvex}
\forall\, x,y\in X,\qquad 2\|x\|_X^q + \frac{2}{\mathscr{K}^q} \|y\|_X^q \leq \|x+y\|_X^q+ \|x-y\|_X^q.
\end{equation}
The minimum $\mathscr{K}$ for which \eqref{eq:pconvex} holds is denoted $\mathscr{K}_q(X)$. As shown in~\cite[Lemma~3.1]{Bal92} (for $q=2$) and~\cite[Lemma~6.5]{MN14} (for general $q\ge 2$), every $q$-uniformly convex Banach space is also $q$-barycentric with constant  $\beta \le2\mathscr{K}_q(X)$. Combining this fact with Theorem \ref{thm:main}, we deduce the following statement
\begin{corollary} \label{cor:pconvex}
Suppose that $q\in[2,\infty)$ and let $(X,\|\cdot\|_X)$ be a $q$-uniformly convex Banach space. Then, for every $n\in\N$ and $m\in 2\N$, every function $f:\Z_{2m}^n\to X$ satisfies
\begin{equation} \label{eq:main-banach}
\Bigg(\sum_{i=1}^n \sum_{x\in\Z_{2m}^n} \big\| f(x+me_i)-f(x)\big\|_X^q\Bigg)^{\frac{1}{q}} \lesssim \Big(n^{\frac{1}{q}}+\mathscr{K}_q(X)m\Big)\Bigg(\frac{1}{2^n} \sum_{\e\in\{-1,1\}^n} \sum_{x\in\Z_{2m}^n} \big\|f(x+\e)-f(x)\big\|_X^q\Bigg)^{\frac{1}{q}}.
\end{equation}
\end{corollary}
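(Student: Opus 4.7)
The proof is essentially an immediate combination of two ingredients that are both available by the time Corollary~\ref{cor:pconvex} is stated, so my plan is little more than a bookkeeping exercise; there is no genuine obstacle.

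The plan is to invoke the comparison between $q$-uniform convexity and the $q$-barycentric property, and then feed the resulting constant into the metric cotype inequality of Theorem~\ref{thm:main}. Concretely, the cited fact (Ball~\cite{Bal92} for $q=2$ and Mendel--Naor~\cite{MN14} for general $q\geq 2$) asserts that if $(X,\|\cdot\|_X)$ is $q$-uniformly convex with constant $\mathscr{K}_q(X)$, then $X$, viewed as a metric space under the metric induced by its norm, is $q$-barycentric in the sense of~\eqref{eq:pbarycentric} with barycenter constant $\beta\leq 2\mathscr{K}_q(X)$. The underlying barycenter map is the usual one, namely $\mathfrak{B}(\mu)$ is the unique minimizer of the (strictly convex, coercive) functional $x\mapsto \int_X\|x-y\|_X^q\diff\mu(y)$, and the cited lemmas verify that this minimizer satisfies~\eqref{eq:pbarycentric} with the claimed $\beta$.

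Given this, I would apply Theorem~\ref{thm:main} to $X$ with $\beta=2\mathscr{K}_q(X)$ and to the same $n\in\N$, $m\in 2\N$ and $f:\Z_{2m}^n\to X$. Substituting the norm metric into the right-hand side of~\eqref{eq:main}, the prefactor becomes $4n^{1/q}+\beta m \leq 4n^{1/q}+2\mathscr{K}_q(X)m \lesssim n^{1/q}+\mathscr{K}_q(X)m$, which is exactly the prefactor appearing in~\eqref{eq:main-banach}. This yields the corollary.

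The only step that deserves comment is the invocation of the $q$-uniformly convex $\Rightarrow$ $q$-barycentric implication, since Theorem~\ref{thm:main} requires genuine $q$-barycentricity (a nonlinear, measure-valued condition) whereas~\eqref{eq:pconvex} is only a two-point inequality on the norm. However, this implication is precisely the content of the references already cited in the paragraph preceding the corollary, so no independent argument is needed here. Accordingly, the whole proof is a two-line derivation: quote the comparison $\beta\leq 2\mathscr{K}_q(X)$, then apply Theorem~\ref{thm:main}.
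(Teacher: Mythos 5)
Your proposal is correct and is exactly the paper's derivation: the corollary is deduced by citing that $q$-uniform convexity implies the $q$-barycentric condition with constant $\beta\le 2\mathscr{K}_q(X)$ (Ball for $q=2$, Mendel--Naor for general $q\ge 2$) and then applying Theorem~\ref{thm:main} with this $\beta$, absorbing $4n^{1/q}+2\mathscr{K}_q(X)m$ into the stated prefactor. No differences worth noting.
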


In~\cite[Section~4]{MN08} a bound that is similar to~\eqref{eq:main-banach} was obtained under the assumption that $X$ is a $K$-convex Banach space of Rademacher cotype $q$, in which case in the   right hand side of~\eqref{eq:main-banach} the quantity $\mathscr{K}_q(X)$ is replaced in~\cite{MN08} by the product of the operator norm of the Rademacher projection on $L_q(\{-1,1\}^n;X)$ and the Rademacher cotype $q$ constant of $X$. These two results are incomparable, in the sense that there are Banach spaces $X$ for which~\eqref{eq:main-banach}  is stronger than the bound of~\cite{MN08}, and vice versa. To examine a concrete example, by the Clarkson inequality~\cite{Cla36}, if $q\in [2,\infty)$ and $X=\ell_q$, then we have $\mathscr{K}_q(X)=1$, so the first term in the right hand side of~\eqref{eq:main-banach} becomes $n^{1/q}+m$, which makes~\eqref{eq:main-banach} sharp in this case, up to the implicit absolute constant factor. In contrast, the Rademacher cotype $q$ constant of $\ell_q$ is equal to $1$  and, for sufficiently large $n=n(q)\in \N$, the  norm of the Rademacher projection on $L_q(\{-1,1\}^n;\ell_q)$ is at least a universal constant multiple of $\sqrt{q}$ (for a justification of the latter statement,  consider e.g.~\cite[Lemma~7.4.11]{Hyt17} with $N\asymp p$). Hence the corresponding term in the bound of~\cite{MN08} is $n^{1/q}+\sqrt{q}m$, which  is significantly weaker than~\eqref{eq:main-banach} if $q$ is large; we will describe in Section~\ref{sec:distrotion} below a geometric consequence of~\eqref{eq:main-banach} that relies on its behavior in the large $q$ regime and does not follow from its counterpart in~\cite{MN08}. On a more conceptual level, the fact that the Rademacher projection appears in the bound of~\cite{MN08} reflects the Fourier-analytic nature of its proof in~\cite{MN08}. In the present setting, we need an argument that works for functions that take values in barycentric metric spaces rather than Banach spaces, in which case we do not know how to interpret the considerations of~\cite{MN08}. The new route that we take here leads to the aforementioned better dependence on $q$ as $q\to \infty$ when $X=\ell_q$, though, as we already mentioned above, it is neither stronger nor weaker than the bound of~\cite{MN08} for general Banach spaces.

\subsection{Non-embeddability} Here we will derive some geometric consequences of Theorem~\ref{thm:main}, including  Theorem~\ref{thm:cat0 special case}.

\subsubsection{Coarse, uniform and quasisymmetric embeddings}\label{sec:deduce coarse} A metric space $(Y,d_Y)$  is said to embed uniformly into a metric space $(X,d_X)$ if there exists a one-to-one mapping $f:Y\to X$ such that both $f$ and $f^{-1}:f(Y)\to Y$ are uniformly continuous.  $(Y,d_Y)$  is said to embed quasisymmetrically into $(X,d_X)$ if there exists a one-to-one mapping $f:Y\to X$ and an increasing modulus $\eta:(0,\infty)\to (0,\infty)$ with $\lim_{t\to 0}\eta(t)=0$ such that for every distinct $x,y,z\in Y$ we have
 $$
 \frac{d_Y\big(f(x),f(y)\big)}{d_Y\big(f(x),f(z)\big)}\le \eta\left(\frac{d_X(x,y)}{d_X(x,z)}\right).
 $$
See~\cite{Isb64,TV80,BL00,Hei01,Ost13} and the references therein for (parts of) the large literature on these topics.

The proof of the following proposition is a straightforward abstraction of the arguments in~\cite{MN08} (for coarse and uniform embeddings) and~\cite{Nao12-quasi} (for quasisymmetric embeddings).

\begin{proposition}\label{prop:CUQ}
Suppose that $p,q\in [2,\infty]$ satisfy $p>q$. Then $\ell_p$ does not admit a coarse, uniform or quasisymmetric embedding into a metric space $(X,d_X)$ that has sharp metric cotype $q$. More generally, if a Banach space $(Y,\|\cdot\|_Y)$ admits a  coarse, uniform or quasisymmetric embedding into such $(X,d_X)$, then $Y$ has Rademacher cotype $q+\e$ for any $\e>0$.
\end{proposition}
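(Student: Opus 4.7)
The plan is to prove the general (Rademacher cotype) statement and then deduce the first assertion by specialization. Recall that for $p\in[2,\infty)$ the space $\ell_p$ has Rademacher cotype exactly $p$ and no smaller exponent~\cite{Mau03}; consequently, if $\ell_p$ admitted a coarse, uniform, or quasisymmetric embedding into such an $X$ with $p>q$, the general statement would force $\ell_p$ to have Rademacher cotype $q+\e$ for every $\e>0$, which violates $p>q$ upon letting $\e\to 0$.

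To prove the general statement, let $f\colon Y\to X$ be an embedding of the given type. Following the template of~\cite{MN08} for coarse and uniform maps and~\cite{Nao12-quasi} for quasisymmetric ones, the first step is to extract from $f$ a pair of nondecreasing moduli $\omega\leq\Omega$ controlling $d_X(f(y),f(y'))$ in terms of $\|y-y'\|_Y$. The Banach-space homogeneity $y\mapsto Ry$ allows the three embedding types to be treated in parallel, with an auxiliary scale parameter $R$ tending to $\infty$ (coarse/quasisymmetric cases) or to $0$ (uniform case).

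The heart of the argument is to apply the sharp metric cotype $q$ inequality~\eqref{eq:cotype}, with the minimal scaling parameter $m\asymp n^{1/q}$, to the function $g\colon\Z_{2m}^n\to X$ given by $g(x)=f\bigl(R\sum_{i=1}^n [x_i]\,v_i\bigr)$, where $v_1,\dots,v_n\in Y$ are test vectors for the Rademacher cotype inequality and $[x_i]\in\{0,1,\dots,2m-1\}$ is the canonical lift of $x_i\in\Z_{2m}$. A key observation is that adding $m$ to any element of $\Z_{2m}$ changes its canonical lift by exactly $\pm m$, so $g(x+me_i)-g(x)$ corresponds in $Y$ to a translation by $\pm Rm\,v_i$ with no wrap-around ambiguity; hence the left-hand side of~\eqref{eq:cotype} is bounded below by $(2m)^n\sum_{i=1}^n\omega(Rm\|v_i\|_Y)^q$. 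On the right-hand side, for the bulk of pairs $(x,\e)\in\Z_{2m}^n\times\{-1,1\}^n$ the increment $g(x+\e)-g(x)$ corresponds in $Y$ to translation by $R\sum_i\e_i v_i$, bounded above by $\Omega\bigl(R\|\sum_i\e_i v_i\|_Y\bigr)$, while the minority of wrap-around terms is bounded separately using $\Omega$ at the larger scale $Rm\max_i\|v_i\|_Y$. After using the sharp cancellation $m^q\asymp n$, this produces a Rademacher-type comparison at scale $R$ of the form $\omega\bigl(Rm\min_i\|v_i\|_Y\bigr)\lesssim_\Gamma \bigl(\Avg_\e\Omega\bigl(R\|\sum_i\e_i v_i\|_Y\bigr)^q\bigr)^{1/q}+\text{error}$.

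Finally, exactly as in~\cite[Section~7]{MN08}, a pigeonhole/Baire-category argument on the relative growth of $\omega$ and $\Omega$ along a suitable sequence of scales $R$ converts this two-sided coarse comparison into a genuine Rademacher cotype $q+\e$ inequality on $Y$, where the loss $\e$ can be made arbitrarily small precisely because the metric cotype scaling is sharp. The principal obstacle is this last extraction: one must control the ratio $\Omega/\omega$ along carefully chosen scales while simultaneously ensuring that the wrap-around error remains subdominant, uniformly in $n$ and in the test vectors $v_i$; all other steps are essentially formal transfers of the metric cotype inequality provided by the sharpness hypothesis on $X$.
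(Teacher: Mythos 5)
Your overall logical architecture (prove the Rademacher-cotype statement for general $Y$, then specialize to $\ell_p$) is admissible, but the construction at its heart has a genuine gap. You build the test function from the canonical lift $[x_i]\in\{0,\ldots,2m-1\}$, i.e.\ $g(x)=f\big(R\sum_i [x_i]v_i\big)$, and claim that wrap-around affects only a minority of pairs $(x,\varepsilon)$. With the sharp scaling $m\asymp n^{1/q}\ll n$, the opposite is true: for a uniformly random $x\in\Z_{2m}^n$ and any fixed $\varepsilon\in\{-1,1\}^n$, each coordinate wraps with probability $\tfrac{1}{2m}$, so the expected number of wrapping coordinates is $\tfrac{n}{2m}\asymp n^{1-1/q}\to\infty$ and the proportion of pairs with no wrap-around is $(1-\tfrac{1}{2m})^n\approx e^{-n/(2m)}\to 0$. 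Each wrapped coordinate shifts the argument in $Y$ by an amount of order $2mR\|v_i\|_Y$, so the ``error'' term is not subdominant; it swamps the right-hand side of~\eqref{eq:cotype} and the claimed comparison $\omega(Rm\min_i\|v_i\|_Y)\lesssim_\Gamma(\Avg_\varepsilon\Omega(R\|\sum_i\varepsilon_iv_i\|_Y)^q)^{1/q}+\mathrm{error}$ does not follow. The paper avoids this entirely by never lifting: it sends $x_j\in\Z_{2m}$ to the circle, $x_j\mapsto n^{-1/p}m\,e^{\pi\i x_j/m}$, so that $\varepsilon$-increments are uniformly $O(1)$ in norm while $me_j$-increments have norm $\asymp n^{-1/p}m$, with no wrap-around issue by construction.

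The second gap is the final extraction step. There is no ``pigeonhole/Baire-category argument'' in~\cite{MN08} that converts a two-sided comparison of the moduli $\omega,\Omega$ into a genuine Rademacher cotype $q+\varepsilon$ inequality for arbitrary vectors $v_1,\ldots,v_n\in Y$; indeed a coarse embedding gives no multiplicative control of distances at any scale, so one cannot hope to recover a homogeneous linear inequality this way. Both~\cite{MN08} and the present paper argue in the reverse order: one first shows directly (via the circle construction above and the sharp scaling $m\lesssim n^{1/q}$, against the lower bound $m\ge n^{1/q}/\Gamma$) that a coarse embedding of $\ell_p$, $p>q$, forces $\omega(\tfrac2\Gamma n^{1/q-1/p})\le C\Omega(\pi)$ for all $n$, contradicting $\omega(t)\to\infty$; the general Banach-space statement is then deduced from this via the Maurey--Pisier theorem, since failure of cotype $q+\varepsilon$ makes $\ell_\infty^n$ (hence the relevant almost-isometric copies of these grids) finitely representable in $Y$, and composing with the coarse embedding reproduces the contradiction. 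So your specialization step is fine, but the general statement cannot be reached by your route without replacing the lifted grid by a cyclic-structure-respecting test map and replacing the moduli-to-cotype extraction by the Maurey--Pisier reduction (or an input of comparable strength).
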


For completeness, we shall now quickly prove Proposition~\ref{prop:CUQ} in the case of coarse embeddings, thus establishing that Theorem~\ref{thm:main} implies Theorem~\ref{thm:main q-bary}.

\begin{proof}[Proof of Proposition~\ref{prop:CUQ} in the case of coarse embeddings]Let $\omega,\Omega:[0,\infty)\to [0,\infty)$ satisfy $\omega\le \Omega$  and $\lim_{t\to \infty}\omega(t)=\infty$. Suppose that $(X,d_X)$ is a non-singleton metric space that satisfies the sharp metric cotype $q$ condition~\eqref{eq:cotype} for every $n\in \N$, some $\Gamma=\Gamma(q,X)>0$ and $m=m(n,q,X)\in \N$ obeying $m\le C n^{1/q}$ for some $C=C(q,X)>0$. By~\cite[Lemma~2.3]{MN08} we have $m\ge \frac{1}{\Gamma} n^{1/q}$.  Suppose also that $\phi:\ell_p^n\to X$ satisfies $\omega(d_X(x,y))\le \|\phi(x)-\phi(y)\|_p\le \Omega(d_X(x,y))$ for all $x,y\in \ell_p^n$.

Consider the function $f:\Z_{2m}^n\to X$ that is given by
$$
\forall\, x=(x_1,\ldots,x_n)\in \Z_{2m}^n,\qquad f(x)\eqdef \phi\left(n^{-\frac{1}{p}}me^{\frac{\pi\i}{m}x_1},\ldots,n^{-\frac{1}{p}}m e^{\frac{\pi\i}{m}x_n}\right).
$$
Then, for every $x\in \Z_{2m}^n$, every $j\in \n$ and every  $\e\in \{-1,1\}^n$ we have
$$
d_X\big(f(x+me_j),f(x)\big)\ge \omega \Big(n^{-\frac{1}{p}}m|e^{\pi \i}-1|\Big)\ge  \omega\Big(\frac{2}{\Gamma} n^{\frac{1}{q}-\frac{1}{p}}\Big) \qquad\mathrm{and}\qquad  d_X\big(f(x+\e),f(x)\big)\le \Omega\Big( m\big|e^{\frac{\pi\i}{m}}-1\big|\Big)\le \Omega(\pi).
$$
Due to these bounds, the assumed sharp metric cotype $q$ inequality  yields the estimate
\begin{equation}\label{eq:omega bounded}
\omega\Big(\frac{2}{\Gamma} n^{\frac{1}{q}-\frac{1}{p}}\Big) n^{\frac{1}{q}}\le \Gamma m\Omega(\pi) \le C \Omega(\pi)n^{\frac{1}{q}} \  \Longrightarrow \  \omega\Big(\frac{2}{\Gamma} n^{\frac{1}{q}-\frac{1}{p}}\Big)\le C\Omega(\pi).
\end{equation}
Since $q<p$, the validity of~\eqref{eq:omega bounded} for every $n\in \N$  contradicts the assumption that $\lim_{t\to \infty}\omega(t)=\infty$. The final statement of Proposition~\ref{prop:CUQ} follows by combining this conclusion with a (deep) classical theorem of Maurey and Pisier~\cite{MP76}.
\end{proof}

To state  one concrete  example of a locally finite metric space $Y$ for which the conclusion of Theorem~\ref{thm:cat0 special case} holds true, by using the above proof of Proposition~\ref{prop:CUQ} with the (arbitrarily chosen) value $p=3$, while recalling that by Theorem~\ref{thm:main}, when $X$ is an Alexandrov space of nonpositive curvature one can work with $m=\sqrt{n}$, shows that one can take
\begin{equation}\label{eq:disjoint union}
(Y,d_Y)=\bigsqcup_{m=1}^\infty \left(\sqrt[3]{m}\left\{e^{\frac{\pi \i}{m}},e^{\frac{2\pi \i}{m}},e^{\frac{3\pi \i}{m}},\ldots,e^{2\pi \i}\right\}^{m^2},\|\cdot\|_{\ell_3^{m^2}}\right).
\end{equation}
Here (and throughout), we use the standard convention that the disjoint union of bounded metric spaces is their set-theoretic disjoint union, equipped with the metric that coincides with the given metric within each "cluster," and the distance between points that belong to two different clusters is the maximum of the diameters of these two clusters.

The metric space in~\eqref{eq:disjoint union} is locally finite, but not of bounded geometry (see~\cite[Definition~1.66]{Ost13}).   We therefore ask

\begin{question}\label{Q:bounded geometry}
Does there exist a metric space of bounded geometry that does not embed coarsely into any Alexandrov space of nonpositive curvature? Does this hold true for coarse embedding into any barycentric space?
\end{question}
Given  that Theorem~\ref{thm:cat0 special case} has now been established, it seems plausible that the answer to Question~\ref{Q:bounded geometry} is positive. However, it isn't clear  whether the expander-based approach~\cite{Gro00,Gro03} that was previously used to address this matter for embeddings into a Hilbert space could be be applied here (certainly, by~\cite{Kon12}, one cannot use arbitrary expanders, as was done before. A tailor-made expander would be needed); see Section~\ref{sec:open} for more on this very interesting issue.

\subsubsection{Bi-Lipschitz distortion}\label{sec:distrotion} Suppose that $(U,d_U)$ and $(V,d_V)$ are non-singleton metric spaces, and that $f:U\to V$ is a one-to-one mapping. The (bi-Lipschitz) distortion of $f$, denoted $\dist(f)\in [1,\infty]$, is  the quantity
$$
\dist(f)\eqdef \sup_{\substack{x,y\in U\\ x\neq y}}\frac{d_V\big(f(x),f(y)\big)}{d_U(x,y)}\cdot  \sup_{\substack{a,b\in U\\ a\neq b}}\frac{d_U(a,b)}{d_V\big(f(a),f(b)\big)}.
$$
We shall also use the convention that $\dist(f)=\infty$ if $f$ is not one-to-one. The distortion of $U$ in  $V$, denoted $\cc_V(U)$, is  the infimum of $\dist(f)$ over all possible $f:U\to V$. In particular, by re-scaling we see that the distortion of a  metric space $(M,d_M)$ in a Banach space $(X,\|\cdot\|_X)$  is the infimum over those $D\in [1,\infty]$ for which there is $f:M\to X$ that satisfies
\begin{equation}\label{eq:def distortion}
\forall\, x,y\in M,\qquad \frac{d_M(x,y)}{D}\le \|f(x)-f(y)\|_X\le  d_M(x,y).
\end{equation}
For $p\in [1,\infty]$ and a finite metric space $(\Phi,d_\Phi)$, we will use below the simpler notation $\cc_{\ell_p}(\Phi)=\cc_p(\Phi)$.

Given $m,n\in \N$, let $[m]_\infty^n\subset \ell_\infty^n$ denote the finite grid $\{1,\ldots,m\}^n\subset \R^n$, equipped with the $\ell_\infty$ metric on $\R^n$. For each $q\in [1,\infty]$ we consider two "trivial" embeddings of $[m]_\infty^n$ into $\ell_q$. The first is the restriction of the identity mapping from $\ell_\infty^n$ to $\ell_q^n\subset \ell_q$, which we denote below by $\Id_{[m]_\infty^n\to \ell_q}$. The distortion of $\Id_{[m]_\infty^n\to \ell_q}$ equals $n^{1/q}$. For the second embedding, fix an arbitrary enumeration $\{x_1,\ldots,x_{m^n}\}$ of the $m^n$ elements of $[m]_\infty^n$, and let $\mathsf{Forget}_{[m]_\infty^n\to \ell_q}$ be the mapping
$$
\forall\, j\in \big\{1,\ldots,m^n\big\},\qquad \mathsf{Forget}_{[m]_\infty^n\to \ell_q}(x_j)=e_j\in \ell_q^{m^n}.
$$
The reason for the above notation/terminology is that this mapping  "forgets"  the metric structure of the grid $[m]_\infty^n$ altogether, as it is an arbitrary bijection of $[m]_\infty^n$ and the vertices of a simplex of $m^n$ vertices, on which the $\ell_q$ metric is trivial (equilateral). Since the diameter of $[m]_\infty^n$ equals $m$, the distortion of the embedding $\mathsf{Forget}_{[m]_\infty^n\to \ell_q}$ equals $m$.

The following consequence of Corollary~\ref{cor:pconvex} shows that when $q\ge 2$, the better of the  two trivial embeddings $\Id_{[m]_\infty^n\to \ell_q}$ and $\mathsf{Forget}_{[m]_\infty^n\to \ell_q}$ yields the smallest-possible (up to universal constant factors) distortion of the grid $[m]_\infty^n$ into $\ell_q$.

\begin{corollary}\label{cor:grid dist} For every $m,n\in \N$ and $q\in [2,\infty)$ we have
\begin{equation}\label{eq:identity forget}
\cc_q\big([m]_\infty^n\big)\asymp \min\Big\{ \dist\big(\Id_{[m]_\infty^n\to \ell_q}\big), \dist\big(\mathsf{Forget}_{[m]_\infty^n\to \ell_q}\big)\Big\}=\min\Big\{n^{\frac{1}{q}},m\Big\}.
\end{equation}
\end{corollary}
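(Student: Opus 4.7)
The equality on the right of~\eqref{eq:identity forget} is already the content of the two computations done just before the statement, so the upper bound $\cc_q([m]_\infty^n)\le \min\{n^{1/q},m\}$ comes for free from the trivial embeddings $\Id_{[m]_\infty^n\to\ell_q}$ and $\mathsf{Forget}_{[m]_\infty^n\to\ell_q}$. All of the work is in the lower bound $\cc_q([m]_\infty^n)\gtrsim \min\{n^{1/q},m\}$, and the plan is to feed an arbitrary embedding $f:[m]_\infty^n\to\ell_q$ of distortion $D$ into the sharp metric cotype inequality of Corollary~\ref{cor:pconvex} via a tent-map transport to the discrete torus. The crucial quantitative feature is that Clarkson's inequality gives $\mathscr{K}_q(\ell_q)=1$, so when $X=\ell_q$ the right-hand side of~\eqref{eq:main-banach} carries no implicit $q$-dependence, exactly as flagged in the discussion after Corollary~\ref{cor:pconvex}.

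I would proceed as follows. Assuming $m\ge 3$ (the cases $m\in\{1,2\}$ are trivial since then $\min\{n^{1/q},m\}$ and $\cc_q([m]_\infty^n)$ are both $O(1)$), let $k\in 2\N$ be the largest even integer with $k+1\le m$, so that $k\asymp m$, and define the tent map $\tau:\Z_{2k}\to\{1,\ldots,k+1\}\subset\{1,\ldots,m\}$ by $\tau(y)\eqdef 1+\min(y,2k-y)$ for representatives $y\in\{0,\ldots,2k-1\}$. Then $|\tau(y+1)-\tau(y)|\le 1$ for every $y\in\Z_{2k}$, and a direct summation (based on $|\tau(y+k)-\tau(y)|=|k-2y|$ on $y\in\{0,\ldots,k\}$ and the symmetric formula on the other half) yields the key identity
\[
\sum_{y\in\Z_{2k}}|\tau(y+k)-\tau(y)|^q \;\asymp\; k^{q+1}.
\]
Setting $g\eqdef f\circ\tau^n:\Z_{2k}^n\to\ell_q$, with $\tau^n$ acting coordinatewise, and assuming after rescaling that $\|a-b\|_\infty/D\le\|f(a)-f(b)\|_q\le\|a-b\|_\infty$ for all $a,b\in[m]_\infty^n$, the Lipschitz bound on $\tau$ gives $\|g(x+\e)-g(x)\|_q\le 1$ for every $\e\in\{-1,1\}^n$, while the antipodal identity above yields
\[
\sum_{i=1}^n\sum_{x\in\Z_{2k}^n}\|g(x+ke_i)-g(x)\|_q^q \;\ge\; \frac{1}{D^q}\sum_{i=1}^n\sum_{x\in\Z_{2k}^n}|\tau(x_i+k)-\tau(x_i)|^q \;\asymp\; \frac{n\,k^q\,(2k)^n}{D^q}.
\]
Plugging these two estimates into~\eqref{eq:main-banach} with $\mathscr{K}_q(\ell_q)=1$ produces $n^{1/q}k/D \lesssim n^{1/q}+k$, whence $D\gtrsim n^{1/q}k/(n^{1/q}+k)\asymp\min\{n^{1/q},k\}\asymp\min\{n^{1/q},m\}$.

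The main obstacle is really just engineering the auxiliary map $\tau$ correctly: one needs a single-coordinate function on $\Z_{2k}$ whose nearest-neighbor increments are uniformly $O(1)$ but whose antipodal displacement $|\tau(y+k)-\tau(y)|$ is of order $k$ in $\ell_q$-average over $y$. The tent map does both jobs simultaneously, and once this is in place the estimate drops out in one line from Corollary~\ref{cor:pconvex}. What makes the resulting lower bound sharp (as opposed to, say, $\min\{n^{1/q}/\sqrt{q},m\}$) is precisely the absence of any hidden $q$-dependence in~\eqref{eq:main-banach} when $X=\ell_q$, a feature that the Fourier-analytic bound of~\cite{MN08} does not share and that the nonlinear martingale route behind Corollary~\ref{cor:pconvex} is specifically designed to recover.
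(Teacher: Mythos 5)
Your proof is correct and takes a genuinely different route from the paper's. The paper invokes~\cite[Lemma~6.12]{MN08}, which furnishes a bi-Lipschitz ($O(1)$-distortion) embedding $\psi\colon\Z_{2m}^n\to[m+1]_\infty^{2n}$ (this is why the paper works with $[m+1]_\infty^{2n}$ and $2n$ coordinates --- the lemma doubles the dimension in order to achieve bi-Lipschitz control), and then applies Corollary~\ref{cor:pconvex} to $f\circ\psi$. You instead feed the coordinatewise tent map $\tau^n\colon\Z_{2k}^n\to[m]_\infty^n$ into Corollary~\ref{cor:pconvex}. This map is not even injective, let alone bi-Lipschitz, but you correctly observe that the cotype inequality only requires two one-sided estimates: a pointwise Lipschitz bound on unit-step increments (which the tent map satisfies trivially) and a lower bound on the antipodal displacement \emph{in $\ell_q$-average} (which the tent map satisfies because $|\tau(y+k)-\tau(y)|=|k-2y|$ is $\gtrsim k$ for a constant fraction of $y$). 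This removes the need for dimension-doubling and the external embedding lemma, and works directly with $[m]_\infty^n$. So the student route is more self-contained; the paper route is more modular. The underlying mechanism --- a tent-map transport from the discrete torus to the grid --- is shared, since~\cite[Lemma~6.12]{MN08} is itself built from tent maps.

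One small imprecision: the line $\sum_{y\in\Z_{2k}}|\tau(y+k)-\tau(y)|^q\asymp k^{q+1}$ is not correct with $q$-independent implicit constants, since the sum is in fact $\asymp k^{q+1}/q$. This does not harm the argument, because what actually enters the cotype inequality is the $q$-th root, and $q^{1/q}\asymp1$ uniformly in $q\ge 2$; a cleaner intermediate bound is simply $\sum_{y\in\Z_{2k}}|\tau(y+k)-\tau(y)|^q\ge k(k/2)^q$ (at least $k$ values of $y$ have $|k-2y|\ge k/2$), whose $q$-th root is $k^{1/q}\cdot k/2\ge k/2$. Also, for completeness, the case of small $m$ should be disposed of as you indicate for $m\in\{1,2\}$, but the same trivial bound $1\le\cc_q([m]_\infty^n)\le\min\{n^{1/q},m\}\le m$ handles all $m$ below any fixed universal constant, so only $m$ large enough to produce an even $k\asymp m$ needs the main argument.
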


\begin{proof} It will be notationally convenient  to  show that $\cc_q([m+1]_\infty^{2n})\gtrsim \min\{n^{1/q},m\}$, which is of course equivalent to the assertion of Corollary~\ref{cor:grid dist}, up to a possible modification of the implicit universal constant factor in~\eqref{eq:identity forget}. So, suppose that $f:[m+1]_\infty^{2n}\to \ell_q$ satisfies~\eqref{eq:def distortion} with $M=[m+1]_\infty^{2n}$ and $X=\ell_q$. The task  is to deduce that $D\gtrsim \min\{n^{1/q},m\}$.

 Let $d_{\Z_{2m}^n}:\Z_{2m}^n\times \Z_{2m}^n\to \N\cup\{0\}$ be the shortest-path metric on the Cayley graph of $\Z_{2m}^n$ induced by  $\{-1,0,1\}^n\subset \Z_{2m}^n$. By~\cite[Lemma~6.12]{MN08}, there is an embedding $\psi:\Z_{2m}^n\to \{1,\ldots,m+1\}^{2n}$  whose distortion as a mapping from $(\Z_{2m}^n,d_{\Z_{2m}^n})$ to $[m+1]_\infty^{2n}$ is $O(1)$. So, there are $\alpha,\beta>0$ with $\beta/\alpha\lesssim 1$ such that $\alpha d_{\Z_{2m}^n}(x,y)\le\|\psi(x)-\psi(y)\|_\infty\le \beta d_{\Z_{2m}^n}(x,y)$ for every $x,y\in \Z_{2m}^n$. Consider the function $h=f\circ \psi:\Z_{2m}^n\to \ell_q$. Then  $\|h(x+me_i)-h(x)\|_q\ge \alpha d_{\Z_{2m}^n}(x+me_i,x)/D =\alpha m/D$ and $\|h(x+\e)-h(x)\|_q\le \beta d_{\Z_{2m}^n}(x+\e,x)\le\beta$ for all $x\in \Z_{2m}^n$, $i\in \n$ and $\e\in \{-1,1\}^n$. Therefore,  recalling that $\mathscr{K}_q(\ell_q)=1$, an application of Corollary~\ref{cor:pconvex} to $h$ gives the estimate $\alpha n^{1/q}m/D\lesssim \beta(n^{1/q}+m)$. Since $\beta/\alpha\lesssim 1$, this means that
\begin{equation*}
D\gtrsim \frac{n^{\frac{1}{q}}m}{n^{\frac{1}{q}}+m}\asymp \min\Big\{n^{\frac{1}{q}},m\Big\}.\qedhere
\end{equation*}
 \end{proof}

In the setting of Corollary~\ref{cor:grid dist}, the previous lower bound on $\cc_q([m]_\infty^n)$ is due to~\cite{MN08}, and  is smaller than the sharp estimate~\eqref{eq:identity forget} by a factor of $\sqrt{q}$. We will next see applications for which large values of $q$ are needed and using the  bound of~\cite{MN08} leads to asymptotically suboptimal results. Note that one could analogously fix $p\in (q,\infty)$ and investigate the asymptotic behavior of $\cc_q([m]_p^n)$, where $[m]_p^n$ denotes the  grid $\{1,\ldots,m\}^n\subset \R^n$ equipped with the $\ell_p$ metric on $\R^n$. In this more general setting, using Corollary~\ref{cor:grid dist} we again obtain a lower bound that is better than that of~\cite{MN08} by a factor of $\sqrt{q}$, however we no longer know that it is sharp up to universal constant factors as in~\eqref{eq:identity forget} (the best-known upper bound on $\cc_q([m]_p^n)$ follows from an argument of~\cite{MN06}). This is due to a subtlety that relates to a longstanding open question in metric embeddings; see the discussion of this intriguing issue in~\cite[Remark~1.13]{NS16} and~\cite{EN18}.

By a (special case of a) theorem of Ribe~\cite{Rib76}, for every $n\in \N$ there is $m\in \N$ such that if $\cc_p([m]_\infty^n)= O(1)$ for some $p\in [2,\infty)$, then necessarily $p\gtrsim \log n$.\footnote{A shorter proof of this fact, using  an ultrapower and differentiation argument, follows form~\cite{HM82}. Because for this particular case of Ribe's theorem the target space is $\ell_q$, a further simplification of the differentiation step is possible; see also~\cite{BL00,LN13}}  The fact that Ribe's theorem does not provide any estimate on $m$ was addressed  by Bourgain~\cite{Bou87}, who found a different proof which yields an explicit estimate (Bourgain's discretization theorem), albeit it gives here the weak bound $m=\exp(\exp(O(n)))$. However, since we are dealing with embeddings of $\ell_\infty^n$ rather than a general $n$-dimensional normed space, in the same paper he showed that it suffices to take $m=n$ in our setting (specifically, this follows from of~\cite[Proposition~5]{Bou87}); note that since $[m]_\infty^n$ contains an isometric copy of $[m]_\infty^m$ for every $m\in \n$, this implies formally that one can also take $m$ to be at most any fixed positive power of $n$, by adjusting the implicit constant in the conclusion $p\gtrsim \log n$.  Different proofs of this fact were found in~\cite{MN08,GNS12}, but it remained the best-known bound on $m$ to date. Corollary~\ref{cor:grid dist} implies that actually it suffices to take  $m=O(1)$.

To justify the latter statement, observe that if $\cc_p([m]_\infty^n)= O(1)$, then by~\eqref{eq:identity forget} we have $\min\{n^{1/p},m\}=O(1)$. Hence, provided that $m$ is bigger than an appropriate universal constant (specifically, it needs to be bigger than the implicit constant in the  $O(1)$ notation), it follows that $n^{1/p}=O(1)$, thus implying the desired lower bound $p\gtrsim \log n$. One should note the importance of having the sharp bound~\eqref{eq:identity forget} at our disposal here, because  the aforementioned weaker bound $\cc_p([m]_\infty^n)\gtrsim \min \{n^{1/p}/\sqrt{p},m\}$ of~\cite{MN08} does not imply the  lower bound $p\gtrsim \log n$ for any $m\in \N$ whatsoever.

Another perspective on the above reasoning arises by examining a parameter $p(X)\in [2,\infty)$ that was defined for a finite metric space $(X,d_X)$ in~\cite[Section~1.1]{Nao14} as the infimum over those $p\in [2,\infty)$ for which $\cc_p(X)< 10$. The  value $10$ in this definition was chosen arbitrarily in~\cite{Nao14} for notational simplicity, and clearly given $\alpha>1$ one could consider an analogous quantity $p_\alpha(X)$ by defining it to be the infimum over those $p\in [2,\infty)$ for which $\cc_p(X)<\alpha$. In addition to its intrinsic interest, the study of the quantity $p(X)$ is motivated by its algorithmic significance to approximate nearest neighbor search;  see~\cite[Remark~4.12]{Nao14} as well as~\cite{NR06,BG18} and a more recent improvement in~\cite{ANNRW18}. Due to Corollary~\ref{cor:grid dist}, we can now evaluate these parameters up to universal constant factors for $X=[m]_\infty^n$.

\begin{corollary}\label{coro:Lp index} There exists $m_0\in \N$ such that $p([m]_\infty^n)\asymp \log n$ for every $m,n\in \N$ with $m\ge m_0$. More generally, there exists a universal constant $C\ge 1$ such that $p_\alpha([m]_\infty^n)\asymp (\log n)/\log \alpha$ for every $\alpha\ge 2$ and $m,n\in \N$ with $m\ge C\alpha$.
\end{corollary}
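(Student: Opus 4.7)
The plan is to deduce the second, more general, statement directly from the two-sided distortion bound $\cc_q([m]_\infty^n)\asymp \min\{n^{1/q},m\}$ supplied by Corollary~\ref{cor:grid dist}; the first assertion is then the special case $\alpha=10$ (with $m_0=10C$). Throughout, fix $\alpha\ge 2$ and let $A\in(0,1]$ denote the absolute constant implicit in the lower bound $\cc_q([m]_\infty^n)\ge A\min\{n^{1/q},m\}$ of Corollary~\ref{cor:grid dist}. The argument consists of solving, in $q$, the inequality $\min\{n^{1/q},m\}\lesssim \alpha$, once in each direction.

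For the upper bound on $p_\alpha([m]_\infty^n)$, I would take $q=2\log n/\log\alpha$, for which the trivial identity embedding $\Id_{[m]_\infty^n\to\ell_q}$ already witnesses $\cc_q([m]_\infty^n)\le n^{1/q}=\sqrt{\alpha}<\alpha$; only the elementary upper bound $\cc_q([m]_\infty^n)\le n^{1/q}$ is used here, and the lower threshold on $m$ plays no role. Hence $p_\alpha([m]_\infty^n)\le 2\log n/\log\alpha\lesssim \log n/\log\alpha$. For the matching lower bound, suppose that $q\in[2,\infty)$ satisfies $\cc_q([m]_\infty^n)<\alpha$. By Corollary~\ref{cor:grid dist} this forces $\min\{n^{1/q},m\}<\alpha/A$. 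Setting $C\eqdef 2/A$, the hypothesis $m\ge C\alpha$ yields $m\ge 2\alpha/A>\alpha/A$, so the minimum must be attained at $n^{1/q}$, and consequently $n^{1/q}<\alpha/A$. Rearranging gives $q>\log n/\log(\alpha/A)$; since $\log(\alpha/A)=\log\alpha+\log(1/A)\asymp \log\alpha$ for all $\alpha\ge 2$ (as $\log\alpha\ge \log 2>0$ and $A$ is a universal constant), this simplifies to $q\gtrsim \log n/\log\alpha$. Taking the infimum over all such $q$ concludes that $p_\alpha([m]_\infty^n)\gtrsim \log n/\log\alpha$.

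No genuine obstacle is anticipated, since the entire reduction is a routine inversion of the explicit quantity $\min\{n^{1/q},m\}$ once the sharp distortion estimate of Corollary~\ref{cor:grid dist} is in hand. The only point that deserves mention is the necessity of the assumption $m\ge C\alpha$: it is exactly this lower threshold that rules out the regime in which the minimum is attained by $m$ rather than by $n^{1/q}$, a regime in which $[m]_\infty^n$ would collapse to a quasi-equilateral set under the $\mathsf{Forget}_{[m]_\infty^n\to\ell_q}$ map, with distortion $\lesssim m$ independent of $q$, and the desired logarithmic scaling in $n$ would fail.
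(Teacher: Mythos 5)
Your proposal is correct and follows essentially the same route as the paper: the upper bound comes from the identity embedding's distortion $n^{1/q}$, and the lower bound comes from inverting $\min\{n^{1/q},m\}\lesssim\alpha$ via Corollary~\ref{cor:grid dist}, with the hypothesis $m\ge C\alpha$ serving precisely to force the minimum to be attained at $n^{1/q}$. The only cosmetic differences (choosing $q=2\log n/\log\alpha$ instead of any $q$ slightly above $\log n/\log\alpha$, and making the implicit constant $A$ explicit) do not change the argument.
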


\begin{proof} Fix $p,\alpha\ge 2$ and $m,n\in \N$. Since $\cc_p([m]_\infty^n)\le \dist(\Id_{[m]_\infty^n\to \ell_p})=n^{1/p}$, we trivially have $p_\alpha([m]_\infty^n)\le (\log n)/\log \alpha$. In the reverse direction, suppose that $\cc_p(X)<\alpha$. Therefore  $\min\{n^{1/p},m\}\lesssim \alpha$, by Corollary~\ref{cor:grid dist}. Provided that $m\ge C\alpha$ for a sufficiently large universal constant $C\ge 1$, it follows from this that $n^{1/p}\lesssim \alpha$, and hence $p\gtrsim (\log n)/\log \alpha$.
\end{proof}
Using the bound of~\cite{MN08} in place of Corollary~\ref{cor:grid dist},  one gets  $p([m]_\infty^n)\gtrsim (\log n)/\log\log n$ for $m\ge m_0$. So, Corollary~\ref{coro:Lp index} improves over the available bound  by the modest term $\log\log n$, but the resulting estimate is now sharp up to a universal constant factor. Having established this fact, one could hope for the following even more precise  understanding. Fixing $\alpha\in \{2,3,\ldots\}$ (e.g. focus here on~$\alpha=10$ as in~\cite{Nao14}), since $[m]_\infty^n$ has distortion $m$ from an $m^n$-simplex, we have $p_\alpha([m]_\infty^n)=2$ if $m<\alpha$. At the same time, Corollary~\ref{coro:Lp index} asserts that $p_\alpha([m]_\infty^n)\asymp_\alpha \log n$ if $m\ge C\alpha$, so the asymptotic behavior of $p_\alpha([m]_\infty^n)$ changes markedly as $m$ ranges over the interval $ [\alpha-1,C\alpha]$. This might occur abruptly rather than obeying intermediate asymptotics within the bounded interval; the following natural question seems accessible.

\begin{question} Fix $\alpha\in \{2,3,\ldots\}$. Do there exist $m_0=m_0(\alpha)\in \N$ and $n_0=n_0(\alpha)\in \N$ such that for every $n\in \{n_0,n_0+1,\ldots\}$ we have  $p_\alpha([m]_\infty^n)=2$ if $m\in \{1,\ldots,m_0-1\}$, yet $p_\alpha([m]_\infty^n)\gtrsim_\alpha \log n$ for $m\in \{m_0,m_0+1,\ldots\}$? If so, perhaps $m_0=\alpha$?
\end{question}

\subsection{Quadratic metric inequalities}\label{sec:hierarchy}
Here we will briefly explain how the present work answers further questions  that were posed in the  article~\cite{ANN16} of Andoni, Naor and Neiman, which investigates  issues related to the fundamental (still wide open) problem of obtaining an intrinsic characterization of those metric spaces that admit a bi-Lipschitz embedding into some Alexandrov space of nonpositive or nonnegative curvature.
This topic falls under the intriguing general question that Gromov calls the "curvature problem"~\cite[Section~$1.19_{+}$]{Gro07} (see also his earlier formulation\footnote{Specifically, in~\cite[\S15(b)]{Gro01} Gromov wrote "The geodesic property is one logical level up from concentration inequalities as it involves the {\em existential
quantifier}.
It is unclear if there is a simple $\exists$-free description of (nongeodesic!) {\em subspaces} in $\mathrm{CAT}(\kappa)$-spaces." The term "concentration inequalities" is defined in~\cite[\S15(a)]{Gro01} to be the same inequalities as the quadratic metric inequalities that we consider in~\eqref{eq:quadratic}, except that in~\cite{Gro01} they are allowed to involve arbitrary powers of the pairwise distances. However, due to~\cite{ANN16} it suffices to consider only quadratic inequalities for the purpose of the simple intrinsic description that Gromov hopes to obtain (though, as he indicates, it may not exist).} in~\cite[\S15]{Gro01}, the overall discussion of this topic in~\cite{ANN16,Bac18}, and~\cite{Stu99,FLS07,BN08,Sat09,LP10,BN18} for progress on the isometric setting).

Following~\cite{ANN16}, given $n\in \N$ and two $n$-by-$n$ matrices with nonnegative entries $\mathsf{A}=(a_{ij}),\mathsf{B}=(b_{ij})\in \mathsf{M}_n\big([0,\infty)\big)$, a metric space $(X,d_X)$ is said to satisfy the $(\mathsf{A},\mathsf{B})$-quadratic metric inequality if
\begin{equation}\label{eq:quadratic}
\forall\, x_1,\ldots,x_n\in X,\qquad \sum_{i=1}^n\sum_{j=1}^n a_{ij}d_X(x_i,x_j)^2\le \sum_{i=1}^n\sum_{j=1}^n b_{ij}d_X(x_i,x_j)^2.
\end{equation}

As explained in~\cite[Section~4]{ANN16}, there exist two collections of pairs of matrices with nonnegative entries
$$
\mathscr{A}^{\le 0},\mathscr{A}^{\ge 0} \subset \bigcup_{n=1}^\infty \Big(\mathsf{M}_n\big([0,\infty)\big)\times \mathsf{M}_n\big([0,\infty)\big)\Big),
$$
such that bi-Lipschitz embeddability into some Alexandrov space of nonpositive or nonnegative curvature is characterized by the quadratic metric inequalities that are associated to $\mathscr{A}^{\le 0}$ and $\mathscr{A}^{\ge 0}$, respectively.  Namely, a metric space $(Y,d_Y)$ admits an Alexandrov space of nonpositive (respectively, nonnegative) curvature $(X,d_X)=(X(Y),d_{X(Y)})$ for which  $\cc_X(Y)=O(1)$ if and only if $(Y,d_Y)$ satisfies the $(\mathsf{A},\mathsf{B})$-quadratic metric inequality for every $(\mathsf{A},\mathsf{B})\in \mathscr{A}^{\le 0}$ (respectively, for every $(\mathsf{A},\mathsf{B})\in \mathscr{A}^{\ge 0}$). See~\cite[Proposition~3]{ANN16} for a more refined formulation of this fact that spells out the dependence on the implicit constant in the  $O(1)$ notation. One possible (and desirable) way to obtain an intrinsic characterization of those metric spaces that admit a bi-Lipschitz embedding into some Alexandrov space of nonpositive or nonnegative curvature would be to specify concrete
families $\mathscr{A}^{\le 0},\mathscr{A}^{\ge 0}$ as above. Note that one could describe such $\mathscr{A}^{\le 0},\mathscr{A}^{\ge 0}$ by considering {\em all the possible} quadratic metric inequalities that {\em every} such Alexandrov spaces satisfies, so this question necessarily has some vagueness built into it, depending on what is considered to be "concrete" here.

A concrete candidate for $\mathscr{A}^{\ge 0}$ has not yet been proposed, and it would be very interesting to investigate this further. In~\cite[Section~5]{ANN16} a (quite complicated) candidate for $\mathscr{A}^{\le 0}$ was derived, and the question whether or not it satisfies the desired property remains an interesting open problem that is perhaps tractable using currently available methods.

It was shown in~\cite{ANN16} that whatever $\mathscr{A}^{\ge 0}$ may be, the corresponding family of inequalities "trivializes" if the distances are not squared, i.e., if $\big(\mathsf{A}=(a_{ij}),\mathsf{B}=(b_{ij})\big)\in \big(\mathsf{M}_n(\R)\times \mathsf{M}_n(\R)\big)\cap \mathscr{A}^{\ge 0}$, then for {\em every} metric space $(M,d_M)$ we have
$$
\forall\, x_1,\ldots,x_n\in M,\qquad \sum_{i=1}^n\sum_{j=1}^n a_{ij}d_M(x_i,x_j)\lesssim\sum_{i=1}^n\sum_{j=1}^n b_{ij}d_M(x_i,x_j).
$$
In light of this result, \cite[Section~1.4.1]{ANN16} naturally raised the question whether the same "trivialization" property holds for $\mathscr{A}^{\le 0}$.  Theorem~\ref{thm:main} resolves this question, as exhibited by the sharp metric cotype $2$ inequality itself, which is a quadratic metric inequality that we now know holds in any Alexandrov space of nonpositive curvature, but if one raises all the distances that occur in it to power $1$ rather than squaring them, then it is straightforward to verify that the resulting distance  inequality fails  for the metric space $(\Z_{2m}^n,d_{\Z_{2m}^n})$, even allowing for any loss of a constant factor.

An inspection of the proof of Theorem~\ref{thm:main} that appears below reveals that the sharp metric cotype $2$ inequality for Alexandrov spaces of nonpositive curvature belongs to the family of quadratic metric inequalities that were derived in~\cite[Section~5.2]{ANN16}. Checking  this assertion is somewhat tedious but entirely  mechanical. Specifically, our proof of Theorem~\ref{thm:main}  consists only of (many) applications of  the triangle inequality and (many) applications of an appropriate variant of Pisier's martingale inequality for barycentric metric spaces (see Proposition~\ref{prop:pisier} below). The latter inequality is due to~\cite{MN13}, where its proof is an iterative application of the barycentric condition~\eqref{eq:pbarycentric}. As such, the ensuing derivation of the sharp metric cotype $2$ inequality for  Alexandrov spaces of nonpositive curvature can be recast as falling into the hierarchical framework of~\cite[Section~5.2]{ANN16}. Hence, the above lack of "trivialization" of $\mathscr{A}^{\le 0}$ when the squares are removed already occurs within this hierarchy, thus answering another question that was raised in~\cite{ANN16}.

\subsubsection{Wasserstein spaces}
Given $p\ge 1$ and a  separable complete metric space $(X,d_X)$, let $\mathscr{P}_{\!p}(X)$ denote the space of all Borel probability measures on $X$ of finite $p$'th moment, equipped with the Wasserstein-$p$ metric; see e.g.~\cite{Vil09} for all of the relevant terminology and background (which will not be used in any of the ensuing proofs).

It was proved in~\cite{ANN16} that if $p>1$ and $\theta\in (0,1/p]$, then for every finite metric space $(M,d_M)$, its $\theta$-snowflake, i.e., the metric space $(M,d_M^{\theta})$, embeds with distortion arbitrarily close to $1$ into $\mathscr{P}_{\!p}(\R^3)$. Hence, by basic facts (see e.g.~\cite[Section~2.4]{KL97}) about ultrapowers of metric spaces,  $(Y,d_Y^{\theta})$ embeds isometrically into an ultrapower of $\mathscr{P}_{\!p}(\R^3)$ for every (not necessarily finite) metric space $(Y,d_Y)$. In particular, since by~\cite{Ott01,Stu06,LV09} an ultrapower of $\mathscr{P}_{2}(\R^3)$ is an Alexandrov space of nonnegative curvature, it follows that any metric space embeds via an embedding which is simultaneously coarse, uniform and quasisymmetric into some Alexandrov space of nonnegative curvature.

Initially, namely prior to the present work, the validity of the above universality result led some researchers to suspect that the conclusion of Theorem~\ref{thm:cat0 special case} might actually be false. Indeed, a main open question of~\cite{ANN16} was whether for every metric space $(Y,d_Y)$ the metric space  $(Y,\sqrt{d_Y})$ admits a bi-Lipschitz embedding into some Alexandrov space of nonpositive curvature; Theorem~\ref{thm:cat0 special case} resolves this question.

Since the proof of Theorem~\ref{thm:main bary} relies only on the sharp metric cotype of the target space, which involves only its finite subsets, the following  corollary is a  consequence of a combination of the aforementioned result of~\cite{ANN16} and Theorem~\ref{thm:main bary}.

\begin{corollary}\label{cor:wasserstein} If $p>1$, then $\mathscr{P}_{\!p}(\R^3)$ does not admit a coarse, uniform or quasisymmetric embedding into any barycentric metric space, and  hence, in particular, it admits no such embedding into any Alexandrov space of nonpositive curvature.
\end{corollary}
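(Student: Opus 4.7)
The strategy is to run the coarse-case argument of Proposition~\ref{prop:CUQ} on the composition $\ell_r\hookrightarrow \mathscr{P}_{\!p}(\R^3)\hookrightarrow X$, where $\ell_r$ is first snowflaked by $\theta=1/p$; the uniform and quasisymmetric variants follow by the same adaptations from~\cite{MN08,Nao12-quasi} that are invoked in Proposition~\ref{prop:CUQ}. Suppose, for contradiction, that $(X,d_X)$ is $q$-barycentric with constant $\beta$ and admits a coarse embedding $F:\mathscr{P}_{\!p}(\R^3)\to X$ with moduli $\omega\le\Omega$ and $\omega(t)\to\infty$. By Theorem~\ref{thm:main}, $(X,d_X)$ satisfies the sharp metric cotype $q$ inequality~\eqref{eq:cotype} with some $\Gamma=\Gamma(q,\beta)>0$ and some $m\in 2\N$ obeying $m\le Cn^{1/q}$ with $C=C(q,\beta)$.

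Fix any $r>q$ and set $\theta\eqdef 1/p$. For each admissible $n,m$, consider the finite grid
\[
A_{n,m}\eqdef \Big(n^{-1/r}m\big\{e^{\pi\i/m},e^{2\pi\i/m},\ldots,e^{2\pi\i}\big\}\Big)^n\subset \ell_r^n,
\]
equipped with the $\ell_r^n$ metric $d_{A_{n,m}}$. Exactly as in the proof of Proposition~\ref{prop:CUQ}, the natural identification of $A_{n,m}$ with $\Z_{2m}^n$ sends a diagonal shift $x\mapsto x+me_j$ to a vector of $\ell_r^n$-length equal to $2n^{-1/r}m$, and each $\e$-shift with $\e\in\{-1,1\}^n$ to a vector of $\ell_r^n$-length at most $\pi$. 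The result of~\cite{ANN16} quoted before the statement of Corollary~\ref{cor:wasserstein} supplies, for every $(n,m)$, a $2$-bi-Lipschitz embedding $g_{n,m}:(A_{n,m},d_{A_{n,m}}^\theta)\to \mathscr{P}_{\!p}(\R^3)$; note that only a uniform-in-$(n,m)$ constant is required, whereas a distortion arbitrarily close to $1$ is actually available for each fixed finite metric space.

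Setting $f_{n,m}\eqdef F\circ g_{n,m}:\Z_{2m}^n\to X$, we obtain
\[
d_X\big(f_{n,m}(x+me_j),f_{n,m}(x)\big)\ge \omega\Big(\tfrac{1}{2}(2n^{-1/r}m)^\theta\Big),\qquad d_X\big(f_{n,m}(x+\e),f_{n,m}(x)\big)\le \Omega(\pi^\theta),
\]
valid for every $x\in\Z_{2m}^n$, $j\in\n$ and $\e\in\{-1,1\}^n$. Plugging $f_{n,m}$ into~\eqref{eq:cotype} and choosing $m\asymp n^{1/q}$ makes the combinatorial prefactors on the two sides cancel, leaving
\[
\omega\Big((\mathrm{const})\cdot n^{\theta(1/q-1/r)}\Big)\lesssim_{q,\beta} \Omega(\pi^\theta).
\]
Since $r>q$ makes the exponent $\theta(1/q-1/r)$ strictly positive, letting $n\to\infty$ contradicts $\omega(t)\to\infty$. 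The only step beyond the template of Proposition~\ref{prop:CUQ} is the use of the snowflake embedding of~\cite{ANN16}, which here plays the role of the identity on $\ell_r$ used there; no conceptual difficulty beyond bookkeeping of moduli is anticipated.
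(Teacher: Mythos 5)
Your proposal is correct and follows essentially the same route as the paper: the paper deduces the corollary precisely by combining the ANN16 snowflake embeddings of finite metric spaces into $\mathscr{P}_{\!p}(\R^3)$ with the sharp metric cotype obstruction of Theorem~\ref{thm:main} via the argument of Proposition~\ref{prop:CUQ}, which only ever tests finite grids. Your write-up merely makes explicit the bookkeeping (choice of $r>q$, $\theta=1/p$, uniform distortion $2$ for the snowflaked grids, $m\asymp n^{1/q}$) that the paper leaves to the reader.
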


Because $\mathscr{P}_2(\R^3)$ is an Alexandrov space of nonnegative curvature, the following corollary is nothing more than a special case of Corollary~\ref{cor:wasserstein}, but  it seems  worthwhile to state separately.

\begin{corollary}
There exists an Alexandrov space of nonnegative curvature that does not embed coarsely into any Alexandrov space of nonpositive curvature.
\end{corollary}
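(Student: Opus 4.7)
The plan is to observe that this corollary is an immediate specialization of Corollary~\ref{cor:wasserstein} once one records the geometric status of the specific Wasserstein space that appears there. Specifically, I would take $p=2$ in Corollary~\ref{cor:wasserstein}, which already gives that the metric space $\mathscr{P}_{2}(\R^3)$ admits no coarse (in fact, no uniform or quasisymmetric) embedding into any barycentric metric space. Since every Alexandrov space of nonpositive curvature is $2$-barycentric (with constant $\beta=1$), by the Lafontaine--Pansu--Sturm type inequality cited in the excerpt (e.g.~\cite[Lemma~4.1]{LPS00} or~\cite[Theorem~6.3]{Stu03}), this in particular rules out the existence of a coarse embedding of $\mathscr{P}_{2}(\R^3)$ into any nonpositively curved Alexandrov space.

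The only additional ingredient is that $\mathscr{P}_{2}(\R^3)$ is itself an Alexandrov space of nonnegative curvature. This is exactly the content of the works of Otto, Sturm, and Lott--Villani cited earlier in the excerpt~\cite{Ott01,Stu06,LV09}: the Wasserstein-$2$ space over a nonnegatively curved Alexandrov space (here, $\R^3$ with its Euclidean metric, which is flat and hence trivially nonnegatively curved) is again a nonnegatively curved Alexandrov space. With this identification in hand, the witness space $X=\mathscr{P}_{2}(\R^3)$ produced by Corollary~\ref{cor:wasserstein} belongs to the class of Alexandrov spaces of nonnegative curvature, which is precisely what the statement demands.

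There is essentially no obstacle: the whole substance lies upstream, in Corollary~\ref{cor:wasserstein} (and, behind it, in Theorem~\ref{thm:main bary} via sharp metric cotype), together with the externally cited fact that the Wasserstein-$2$ space over a nonnegatively curved Alexandrov space has nonnegative curvature. The proof I would write is therefore a two-line argument that (i) invokes Corollary~\ref{cor:wasserstein} with $p=2$ to prohibit a coarse embedding of $\mathscr{P}_{2}(\R^3)$ into any Alexandrov space of nonpositive curvature, and (ii) cites~\cite{Ott01,Stu06,LV09} to certify that $\mathscr{P}_{2}(\R^3)$ is itself an Alexandrov space of nonnegative curvature, thereby exhibiting the required example.
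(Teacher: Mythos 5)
Your argument is exactly the paper's: the authors note just before the corollary that $\mathscr{P}_2(\R^3)$ is an Alexandrov space of nonnegative curvature (citing the same references), and therefore the statement is simply the $p=2$ case of Corollary~\ref{cor:wasserstein}. The proposal is correct and takes the same two-step route.
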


 The potential validity of Corollary~\ref{cor:wasserstein} (and the underlying universality phenomenon  that was used to deduce it) for $p=1$ remains a very interesting open question; see~\cite{ANN16} for a thorough discussion of this matter, which is closely related to an old open question of Bourgain~\cite{Bou86}. Also,  the conclusion of Corollary~\ref{cor:wasserstein} with $\mathscr{P}_{\!p}(\R^3)$ replaced by $\mathscr{P}_{\!p}(\R^2)$ remains intriguingly open; see~\cite{AN17} for partial information in this direction. In relation to the latter question, observe that due to Proposition~\ref{prop:CUQ}, a positive answer to the following question  would imply that the snowflake universality phenomenon that was established in~\cite{ANN16} for $\mathscr{P}_{2}(\R^3)$ does not hold true for $\mathscr{P}_{2}(\R^2)$.

 \begin{question}\label{Q:sharp cotype wasserstein}
 Does $\mathscr{P}_{2}(\R^2)$ have sharp metric cotype $q$ for any $q\in [2,\infty)$?
 \end{question}


\begin{remark}\label{rem:r3}
Answering a question that was posed in~\cite{ANN16}, our forthcoming work~\cite{EMN18} establishes that any Alexandrov space of nonnegative curvature has metric cotype $2$. In particular, both $\mathscr{P}_{2}(\R^2)$ and  $\mathscr{P}_{2}(\R^3)$ have metric cotype $2$, but by combining the aforementioned snowflake universality of $\mathscr{P}_{2}(\R^3)$ that was proved in~\cite{ANN16} with Proposition~\ref{prop:CUQ},
we see that $\mathscr{P}_{2}(\R^3)$ fails to have metric cotype $q$ with sharp scaling parameter  for any $q\in [2,\infty)$. This yields another example of a metric space whose metric cotype is not sharp, which is yet another indication that the phenomenon of Theorem~\ref{thm:main}  is a rare occurrence. This also shows that the answer to Question~\ref{Q:sharp cotype wasserstein} with $\mathscr{P}_{2}(\R^2)$ replaced by  $\mathscr{P}_{2}(\R^3)$ is negative.
\end{remark}

The following question on understanding the nonnegative-curvature counterpart of Theorem~\ref{thm:main}  seems accessible.


\begin{question}\label{Q:cotype scaling nonnegative}
For $n\in \N$, what is the asymptotic behavior of the smallest $m(n)\in \N$ such that any Alexandrov space of nonnegative curvature has metric cotype $2$ with scaling parameter $m=m(n)$, namely~\eqref{eq:cotype} holds for $q=2$ and $\Gamma=O(1)$?
\end{question}

As stated in Remark~\ref{rem:r3}, the fact that in Question~\ref{Q:cotype scaling nonnegative} the desired $m(n)\in \N$ exists is due to our forthcoming work~\cite{EMN18}, but the bound that follows from the proof of~\cite{EMN18} is probably far from being asymptotically sharp.

\subsection{Implications of the sign of curvature under large deformations}\label{sec:table}
The present work shows that any Alexandrov space of nonpositive curvature has metric cotype $2$, and~\cite{EMN18} shows that also any Alexandrov space of nonnegative curvature has metric cotype $2$. So, on a qualitative level both "signs of curvature" imply the same (best possible, because of the requirement that the space is geodesic) metric cotype. In terms of this specific quadratic metric inequality (recall~\eqref{eq:quadratic} for the general form of a quadratic metric inequality), the difference between the two possible signs of curvature turns out to be more nuanced, namely it manifests itself in the asymptotic behavior of the scaling parameter $m$  in~\eqref{eq:cotype} (with $q=2$) as $n\to \infty$. Specifically, Theorem~\ref{thm:main q-bary} shows that $m\le\sqrt{n}$ suffices when $(X,d_X)$ is an Alexandrov space of nonpositive curvature, while by~\cite{ANN16} we know that $m/\sqrt{n}$ must tend to $\infty$ as $n\to \infty$ if $(X,d_X)$ is the Alexandrov space of nonnegative curvature $\mathscr{P}_{2}(\R^3)$ (recall Question~\ref{Q:cotype scaling nonnegative}).\footnote{This demonstrates that the role of the scaling parameter in the definition of metric cotype is not as subsidiary as it may seem from~\cite{MN08}, where it did not have a crucial role in the metric characterization of Rademacher cotype or the nonlinear Maurey--Pisier theorem. While sharp metric cotype was shown in~\cite{MN08} to have implications to coarse and uniform embeddings, there is a definite possibility that metric cotype $q$ and sharp metric cotype $q$ coincide for Banach spaces (though this is a major  open problem); here we see that this is markedly not so in the setting of Alexandrov geometry, leading to formidable qualitative differences between the coarse implications of the sign of curvature.} If one puts aside this deeper (and important) subtlety,    Alexandrov spaces of nonnegative curvature are actually "better behaved" than Alexandrov spaces of nonpositive curvature in terms of those quadratic invariants that have been computed thus far; see Table~\ref{table:sign}.

\begin{table}[h]
  \caption{Influence of the sign of curvature of Alexandrov spaces on  quadratic bi-Lipschitz invariants. }\label{table:sign}
\begin{center}
  \begin{tabular}{ | c || c | c | c |c|c|c|}
    \hline
    sign of  curvature & Enflo type 2 & Markov type 2 & metric cotype 2 & Markov convexity 2 & nonlinear spectral gap \\ \hline
    $\ge 0$ & yes & yes & yes &yes &no \\ \hline
    $\le 0$ & yes & no & yes & no & no \\
    \hline
  \end{tabular}
\end{center}
\end{table}

In addition to metric cotype, the Ribe program produced several quadratic metric invariants that have a variety of applications (at the same time, questions on how to formulate certain other invariants that should capture in a metrical way key Banach space  properties remain important "missing steps" of the Ribe program; see e.g.~\cite{Nao12}). Those include Enflo type~\cite{Enf69,Enf78} (which, in its quadratic special case, coincides with invariants that were considered by Gromov~\cite{Gro83-filling} and Bourgain, Milman and Wolfson~\cite{BMW86}), K.~Ball's Markov type~\cite{Bal92}, and Markov convexity~\cite{LNP09}. Table~\ref{table:sign} explains how the sign of the curvature of an Alexandrov space influences the validity of such invariants; e.g.~its second column indicates that Enflo type $2$ is implied by both nonnegative and nonpositive curvature (this follows by a variant of an inductive argument that essentially goes back to Enflo; see e.g.~\cite{Oht09}), whereas its third column says that, due to~\cite{Oht09}, any Alexandrov space of nonnegative curvature has Markov type $2$, while, due to~\cite{Kon12}, there exist Alexandrov spaces of nonpositive curvature that do not have Markov type $2$ (even any nontrivial Markov type). The fourth column of Table~\ref{table:sign} was already discussed above, and the top entry of its fifth column is due to~\cite{AN17} (see also~\cite{Li16,EMN18} for variants of the argument in~\cite{AN17}), while the bottom entry of the fifth column is an immediate consequence of the fact that (the $1$-dimensional simplicial complex of) a tree is an Alexandrov spaces of nonpositive curvature.

The last column of Table~\ref{table:sign} deals with the validity of the nonlinear spectral gap inequality~\eqref{def:metric spectral gap} below for {\em every} classical expander $\{\G(n)\}_{n=1}^\infty$. While, as we discussed earlier, this phenomenon holds for a variety of spaces, Table~\ref{table:sign} indicates that it fails for Alexandrov spaces of either nonpositive or nonnegative curvature, due to~\cite{Kon12} and~\cite{ANN16}, respectively.\footnote{In terms of bi-Lipschitz distortion of arbitrary expanders, Table~\ref{table:sign} does not fully indicate the extent to which nonnegative curvature behaves better than nonpositive curvature.  The fact~\cite{Oht09} that an Alexandrov space of nonnegative curvature has Markov type $2$ implies (by examining the standard random walk on the graph) that if $\mathcal{G}=\{\G(n)\}_{n=1}^\infty$ is an expander, then $\cc_X(\G(n))\gtrsim_{\mathcal{G}} \sqrt{\log |\G(n)|}$ for any nonnegatively curved Alexandrov space $X$. In contrast, there is~\cite{Kon12} an Alexandrov space of nonpositive curvature which contains an expander with $O(1)$ distortion.}

Table~\ref{table:sign} is not exhaustive. For example, our forthcoming work~\cite{EMN18} formulates a new metric invariant (inspired by~\cite{LN04}) called {\em diamond convexity}, and proves that Alexandrov spaces of either nonpositive or nonnegative curvature have diamond convexity $2$. Other metric invariants that arose from the Ribe program~\cite{NS16-KS,NS16,NS18} have not yet been computed for Alexandrov spaces, and this is so also for  invariants that arose in other (related) contexts~\cite{KN06,ANT13,LN14,NY18}.

While the literature contains several  geometric inequalities that distinguish between the sign of curvature of Alexandrov spaces (quadrilateral inequality~\cite{BN08}, tripod inequality~\cite{LP10}, Ptolemy inequality~\cite{FLS07} etc.), these had initially been relevant only for the isometric (or almost-isometric) regime, in the sense that if one considers the same inequalities up to a sufficiently large universal constant factor (for the above examples, factor $4$ suffices), then {\em any} metric space satisfies the resulting invariant. So, these initial investigations did not rule out embeddings of very large distortion into Alexandrov spaces. In recent years, some invariants that do retain their non-triviality after arbitrarily large deformations were found, but the crux of the above comparison is that they turned out to always behave either the same or better in the setting of nonnegative curvature than in the setting of nonpositive curvature. This is so even for metric cotype $2$, but the present work establishes that the asymptotic behavior of the scaling parameter in the cotype $2$ inequality provides an invariant with respect to which nonpositive curvature is in fact better behaved than nonnegative curvature. The search for such an  invariant was one of the  motivations of the present investigation, but there clearly remains much to be done in order to understand the coarse/large distortion implications of the sign of curvature in Alexandrov geometry (this general theme is in the spirit of the Riemannian counterpart  that is presented in~\cite{Gro91}).

\section{Limitations of previous approaches and further open questions}\label{sec:open}

The main purpose of this section is to explain why Theorem~\ref{thm:cat0 special case} does not follow from the previously available results (which we listed in the Introduction) on obstructions to coarse embeddings. The ensuing synthesis of the literature and examination of the conceptual limitations of existing tools to address Gromov's question  also naturally leads us to state  several open problems; we believe that these are rich, fertile and important directions for future research.

While the present section is helpful for understanding the significance of our results as well as the challenges that remain, it will not be used in the subsequent proofs and therefore it could be skipped on first reading.

\smallskip

We will next elaborate on all of the pointers to the literature that were listed at the start of the Introduction, but it will be  instructive to do so thematically rather than following the chronological order that we used previously.

\subsection{Metric cotype} The direct precursor (and inspiration) of the present work is~\cite{MN08}, where the validity of sharp metric cotype $q$ is demonstrated for certain Banach spaces. The proof of~\cite{MN08} uses the underlying linear structure through an appeal to the boundedness of an oscillatory convolution operator (the Rademacher projection), which relies on cancellations.  We do not see how to interpret that strategy of~\cite{MN08} for targets that are not Banach spaces.

\subsection{Negative definite kernels} The works of Johnson and Randrianarivony~\cite{JR06} and Randrianarivony~\cite{Ran06} proceed via a reduction to the linear theory through an influential approach of Aharoni, Maurey and Mitaygin~\cite{AMM85}, which uses negative definite kernels and was invented to treat uniform embeddings. This strategy relies heavily on the underlying linear (even Hilbertian) structure and we doubt that it could be interpreted  for targets that are not Banach spaces. If such an interpretation were possible, then it would be a very interesting achievement, likely of value elsewhere.

\subsection{The Urysohn space} Parts of Pestov's proof~\cite{Pes08} that the Urysohn space $\mathbb{U}$ does not admit a coarse embedding into any uniformly convex Banach space $(X,\|\cdot\|_X)$ rely on linear considerations (e.g.~using the Hahn--Banach theorem), so at the outset its relevance to the fully nonlinear setting of Theorem~\ref{thm:cat0 special case} is questionable. But, even putting this matter aside, there is the following  conceptual reason why the approach of~\cite{Pes08} is ill-suited to treating nonpositively curved targets. The crux of the argument in~\cite{Pes08} is the recursive use of   a strong self-similarity property~\cite{Hru92,Sol05,Ver08} of  $\mathbb{U}$ to demonstrate that if $\mathbb{U}$  embeds coarsely into  $X$, then an ultrapower of $\ell_2(X)$ contains arbitrarily large complete binary trees in which all pairs of vertices that are either siblings or form an edge are distorted by at most a $O(1)$-factor (see~\cite[Section~6]{Pes08} for a precise formulation of this statement). By a classical result in Banach space theory (see e.g.~\cite[Chapter~9]{FHPMPZ01}), this conclusion contradicts  the premise that $X$ is uniformly convex. However, (the $1$-dimensional simplicial complex of) a connected tree is  an Alexandrov space of nonpositive curvature, so the above approach  detects a geometric structure that is compatible with nonpositive curvature. Note that, in hindsight, due to universality properties of $\mathbb{U}$,  Theorem~\ref{thm:cat0 special case}  implies formally that $\mathbb{U}$ does not embed coarsely into any Alexandrov space of nonpositive curvature, but the reason that we obtain here for this fact is entirely different from the strategy of~\cite{Pes08}.

\subsection{Arbitrary expanders}\label{sec:any expander}

Throughout  what follows, graphs are tacitly assumed to be finite, connected and regular.  The vertex set of a graph $\G$ is denoted  $V_\G$ and its edge set is denoted $E_\G$. The shorted-path metric that $\G$  induces  on $V_\G$  is denoted $d_{\G}$. When we consider a graph $\G$ as a metric space, it is always understood to be $(V_\G,d_\G)$.

A sequence of graphs $\{\G(n)\}_{n=1}^\infty$ is an expander with respect to a metric space $(X,d_X)$ if $\lim_{n\to \infty }|V_{\G(n)}|=\infty$, the degree of $\G(n)$ is bounded above independently of $n\in \N$, and there exists $\gamma\in (0,\infty)$ such that for any $n\in \N$ we have
\begin{equation}\label{def:metric spectral gap}
\forall \{x_u\}_{u\in V_{\G(n)}}\subset X,\qquad \frac{1}{|V_{\G(n)}|^2}\sum_{(u,v)\in V_{\G(n)}\times V_{\G(n)}} d_X(x_u,x_v)^2\le \frac{\gamma}{|E_{\G(n)}|} \sum_{\{u,v\}\in E_{\G(n)}} d_X(x_u,x_v)^2.
\end{equation}

An extensive  discussion of  expanders with respect to metric spaces can be found in~\cite{MN-SODA,MN14,MN15}. In the special case when $\{\G(n)\}_{n=1}^\infty$  is an expander with respect to a Hilbert space (equivalently with respect to $\R$, or even, by an application of Cheeger's inequality~\cite{Che70,Fie73,Dod84,AM85}, with respect to $\{0,1\}$), the common simpler terminology is that $\{\G(n)\}_{n=1}^\infty$  is an expander (in the classical sense); detailed background on this important notion appears in e.g.~\cite{HLW06}.

 As noted in~\cite{Gro00}, by a short  argument (that is also implicit in works of Linial, London and Rabinovich~\cite{LLR95} and Matou\v{s}ek~\cite{Mat97}, both of which considered bi-Lipschitz embeddings), one shows that if $\{\G(n)\}_{n=1}^\infty$ is  an expander with respect to $(X,d_X)$, then any metric space $(Y,d_Y)$ that contains an isometric copy of each of the finite metric spaces $\{(V(\G_n),d_{\G(n)})\}_{n=1}^\infty$ (e.g.~their disjoint union or their Pythagorean product) does not embed coarsely into $(X,d_X)$. In particular, since (classical) expanders are known to exist (by now, via a wealth of different constructions; see~\cite{HLW06}), we thus obtain Gromov's proof~\cite{Gro00,Gro03} that there exists a metric space that does not embed coarsely into a Hilbert space.

As indicated by Gromov~\cite{Gro01,Gro03} and proven in detail by Kondo~\cite{Kon12}, there exists an Alexandrov space of nonpositive curvature that contains with $O(1)$ distortion a classical expander; for an even more "wild" example of such an Alexandrov space (with respect to which random regular graphs are almost surely {\em not} expanders), see~\cite{MN15}. In particular, it follows that some Alexandrov spaces of nonpositive curvature do not admit a coarse embedding into a Hilbert space. It also follows that the aforementioned important realization of~\cite{Gro00,Gro03} that the presence of {\em any expander whatsoever} implies coarse non-embeddability into a Hilbert space is irrelevant for proving Theorem~\ref{thm:main}.

\subsection{Arbitrary relative expanders} The notion of a {\em relative expander}, or an expander relative to a partition,  was introduced  by  Arzhantseva and Tessera~\cite{AT15} for studying aspects of coarse embeddings into a Hilbert space; it was also used implicitly in the earlier work~\cite{ALW01}, and was studied for algorithmic purposes (clustering) in~\cite{ACKS16}. We will now present the obvious generalization  of the definition of~\cite{AT15}  to a notion of a relative expander with respect to a metric space.

Say that a sequence of graphs $\{\G(n)\}_{n=1}^\infty$ is a relative expander with respect to a metric space $(X,d_X)$ if the degree of $\G(n)$ is bounded above independently of $n\in \N$, and  there exists a constant $\rho>0$ such that  for every $n\in \N$ there is a partition $\mathcal{P}(n)=\{C_1(n),C_2(n),\ldots,C_{k(n)}(n)\}$ of $V_{\G(n)}$ for which $\lim_{n\to \infty}\min_{j\in \{1,\ldots,k(n)\}}|C_j(n)|=\infty$, and
\begin{equation}\label{eq:def relative expander}
\forall \{x_u\}_{u\in V_{\G(n)}}\subset X,\qquad  \frac{1}{|V_{\G(n)}|}\sum_{j=1}^{k(n)} \frac{1}{|C_j(n)|}\sum_{u,v\in C_j(n)} d_X(x_u,x_v)^2\le \frac{\rho }{|E_{\G(n)}|}\sum_{\{u,v\}\in E_{\G(n)}} d_X(x_u,x_v)^2.
\end{equation}
When $(X,d_X)$ is a Hilbert space one simply says that $\{\G(n)\}_{n=1}^\infty$ is a relative expander. This is a weakening of the definition~\ref{def:metric spectral gap} of an expander with respect to $(X,d_X)$, which corresponds to the special case when the partition $\mathcal{P}(n)$ is the trivial partition $\{V_{\G(n)}\}$ for every $n\in \N$. It is, in fact, a very substantial weakening, because by~\cite{AT15} there is a relative expander $\{\G(n)\}_{n=1}^\infty$ such that $\bigsqcup_{n=1}^\infty \mathsf{H}(n)$ does not embed coarsely into $\bigsqcup_{n=1}^\infty \mathsf{G}(n)$ for {\em any} expander $\{\mathsf{H}(n)\}_{n=1}^\infty$. Despite this, the deduction that~\eqref{def:metric spectral gap} implies that  $\bigsqcup_{n=1}^\infty \mathsf{G}(n)$ does not embed coarsely into $(X,d_X)$ carries over effortlessly\footnote{The only additional observation  that is needed for this is that, because the degree of $\G(n)$ is $O(1)$, a quick and standard counting argument (see e.g.~the justification of equation (36) in~\cite{NR17}) shows that $d_{\G(n)}(u,v)\gtrsim \log |C_j(n)|\to \infty$ for a constant fraction of $(u,v)\in C_j(n)\times C_j(n)$.} to show that this remains valid  under the weaker hypothesis that $\{\G(n)\}_{n=1}^\infty$ is a relative expander with respect to $(X,d_X)$.

Because, as we  discussed above, one cannot use arbitrary expanders to prove Theorem~\ref{thm:cat0 special case}, also arbitrary relative expanders cannot be used for this purpose, since the latter is an even larger family of (sequences of) graphs. When in Theorem~\ref{thm:cat0 special case} the target space $X$ is a simply connected Riemannian manifold of nonpositive sectional curvature rather than a general (potentially singular) Alexandrov space of nonpositive curvature, by~\cite{Wan98,IN05} one can take $Y$ to be $\bigsqcup_{n=1}^\infty \mathsf{G}(n)$ for any expander $\{\G(n)\}_{n=1}^\infty$. So, arbitrary expanders  suffice for targets that are not singular (or have "bounded singularities"~\cite{Gro03,Toy10}), but the proof of this in~\cite{Wan98,IN05,NS11}  does not carry  over to relative expanders.

\begin{question}\label{Q:rel ex} Does there exist a relative expander $\{\G(n)\}_{n=1}^\infty$ and a simply connected Riemannian manifold $(M,d_M)$ of nonpositive sectional curvature such that $\bigsqcup_{n=1}^\infty \mathsf{G}(n)$  embeds coarsely into $(M,d_M)$?
\end{question}
Despite the fact that Question~\ref{Q:rel ex} is inherently not a route towards a different proof of Theorem~\ref{thm:cat0 special case},  it is a natural (and perhaps quite accessible) question in Riemannian geometry that arises from the present considerations.

\subsection{Specially-crafted expanders} In order to apply the idea that we recalled in Section~\ref{sec:any expander} to show that there exists a metric space that does not embed coarsely into some non-Hilbertian metric space $(X,d_X)$, one only needs to show that $(X,d_X)$ admits {\em some } expander rather than to show that  {\em any} (classical) expander whatsoever is also an expander with respect to $(X,d_X)$. See~\cite{Mat97,Oza04,IN05,Toy10,NS11,Pis10,Nao14,Mim15,Che16,Toy16,Nao17} for theorems that provide a variety of spaces $(X,d_X)$ that do satisfy the latter stronger requirement for any possible expander. In general, however, it could be that no expander with respect to $(X,d_X)$  exists (certainly not with respect to, say,  $\ell_\infty$). This is so even for (seemingly) "nice" spaces; e.g.~by~\cite{ANN16} there is an Alexandrov space of nonnegative curvature, namely $\mathscr{P}_2(\R^3)$, with respect to which no sequence of graphs is an expander. Obtaining a useful/workable intrinsic  characterization of  those metric spaces with respect to which some expander exists is an important (likely difficult, perhaps intractable) open question.

 This leaves the  possibility, as a potential alternative route towards a proof  Theorem~\ref{thm:cat0 special case}, that one could somehow  come up with a special  sequence of graphs $\{\G(n)\}_{n=1}^\infty$  which is simultaneously an expander with respect to {\em every} Alexandrov space of nonpositive curvature. If this were indeed possible, then it would require constructing a specially-crafted expander, and finding a way to prove the quadratic distance inequality~\eqref{def:metric spectral gap} that relies on geometric considerations (based solely on nonpositive curvature) rather than the straightforward linear algebra/spectral consideration that underlies its Euclidean counterpart (recall that by~\cite{ANN16} this cannot be accomplished for nonnegative curvature).

\begin{question}\label{Q:do expanders exist}
Is there a sequence of graphs $\{\G(n)\}_{n=1}^\infty$ which is an expander with respect to every Alexandrov space of nonpositive curvature $X$? More modestly, is there such an expander  that does not embed coarsely into any such $X$?
\end{question}

Question~\ref{Q:do expanders exist} does not originate in the present work, and is in fact  a well-known  open problem that has been broached  several times (see e.g.~\cite{IN05,Toy10,NS11,FT12,Kon12,MN14,AT15,MN15,ANN16,Toy16}); we restated it above due to its relevance to the present discussion (and importance). Note that if Question~\ref{Q:do expanders exist} had a positive answer, then, since expanders have bounded degree, the disjoint union of the resulting expander $\{\G(n)\}_{n=1}^\infty$  would be a space of bounded geometry  that fails to admit a coarse embedding into any Alexandrov space of nonpositive curvature;  thus answering Question~\ref{Q:bounded geometry}.

V.~Lafforgue devised~\cite{Laf08,Laf09} a brilliant method to prove that certain special graph sequences are expanders with respect to every Banach space of nontrivial Rademacher type; see also~\cite{Lia14,LS15,dlS16} for further perspectives on such {\em super-expanders}. Lafforgue's approach relies heavily on the linear structure of the underlying Banach space through the use of Fourier-analytic considerations, including appeal to  the   substantial  work~\cite{Bou82}. As such, we do not see how these methods could be relevant to Question~\ref{Q:do expanders exist}, though it would be very interesting if they could be implemented for metric spaces that are not Banach spaces. Further examples of such expanders~\cite{Saw17,dLV17,FNvL17} were more recently obtained using Roe's warped cones~\cite{Roe05} and a discretization procedure of Vigolo~\cite{Vig16}, but these take Lafforgue's expanders as input and therefore they suffer from the same deficiency vis-\`a-vis Question~\ref{Q:do expanders exist}.

An entirely different strategy to construct super-expanders was found by Mendel and Naor~\cite{MN14}, using  the zigzag graph product of Reingold, Vadhan and Wigderson~\cite{RVW02}. This strategy does apply to targets that are not Banach spaces (see e.g.~its implementation in~\cite{MN15}), but its potential applicability to Question~\ref{Q:do expanders exist} remains unclear because it relies on an iterative construction and at present we lack a "base graph" to start the induction. Due to~\cite{MN13}, finding such a base graph is the {\em only} issue that remains to be overcome in order to answer Question~\ref{Q:do expanders exist} (positively) via the zigzag strategy of~\cite{MN14,MN15}. This seems to be a difficult question; see Section~\ref{sec:quotient cube} for a candidate base graph.

A more ambitious strengthening of  Question~\ref{Q:bounded geometry} (which is also a well-known problem)  is the following question.

\begin{question}\label{eq:group}
Does there exist a finitely generated group (equipped with a word metric associated to any finite symmetric set of generators) that does not embed coarsely into any Alexandrov space of nonpositive curvature?
\end{question}

As explained in~\cite{NS11}, by applying the graphical random group construction of~\cite{Gro03} (see also~\cite{AD08,Osa14}), a positive answer to Question~\ref{Q:girth} below would imply a positive  answer to Question~\ref{eq:group}. By~\cite{NS11}, this would also yield a group $G$ such that any isometric action of $G$ on an Alexandrov space of nonpositive curvature has a fixed point; the existence of such a group is itself a major open problem, in addition to Question~\ref{eq:group} and Question~\ref{Q:girth}.

\begin{question}\label{Q:girth}
 Does there exist a sequence of graphs $\{\G(n)\}_{n=1}^\infty$ which is an expander with respect to every Alexandrov space of nonpositive curvature, and also for every $n\in \N$ the girth of $\G(n)$ is at least $c\log |V_{\G(n)}|$ for some $c\in (0,\infty)$?
\end{question}

Unfortunately, the approaches of~\cite{Laf08,Laf09,MN14} for constructing super-expanders seem inherently ill-suited for producing the high-girth graphs that Question~\ref{Q:girth} aims to find.

\subsection{Quotients of the Hamming cube}\label{sec:quotient cube} Following~\cite{MN14},
 a candidate for the  base graph in the inductive construction of~\cite{MN14} arises from the work~\cite{KN06} of Khot and Naor, where it is shown that for every $n\in \N$ there is a linear subspace  $V_n\subset \mathbb{F}_2^n=\{0,1\}^n$, namely the polar of an "asymptotically good linear code,"  such that for every $f:\mathbb{F}_2^n/V_n\to \ell_2$ satisfies
\begin{equation}\label{eq:code l2}
\frac{1}{4^n}\sum_{(x,y)\in \mathbb{F}_2^n\times \mathbb{F}_2^n}\|f(x+V_n)-f(y+V_n)\|_2^2\lesssim \frac{1}{n2^n}\sum_{j=1}^n \sum_{x\in \mathbb{F}_2^n}\|f(x+e_j+V_n)-f(x+V_n)\|_2^2.
\end{equation}
The estimate~\eqref{eq:code l2} is close to the assertion that the quotient Hamming graphs $\{\mathbb{F}_2^n/V_n\}_{n=1}^\infty$ (namely, the Cayley graphs of the Abelian groups $\mathbb{F}_2^n/V_n$ with respect to the generators $\{e_1,\ldots,e_n\}$) form an expander, except that they do not have bounded degrees. Nonetheless, it follows from~\eqref{eq:code l2},  similarly to the aforementioned application of~\eqref{def:metric spectral gap}, that any metric space that contains these quotient Hamming graphs does not embed coarsely into a Hilbert space (for this, one only needs to estimate the average distance in these graphs, as done in~\cite{KN06}). In the same vein, a positive answer to Question~\ref{Q:CAT quotient} below would yield a different proof of Theorem~\ref{thm:cat0 special case}. Much more significantly, by~\cite{MN13,MN14,MN15} this would yield the aforementioned desired base graph so as to answer Question~\ref{Q:do expanders exist} positively; see~\cite[Section~7]{MN14} for a variant of this approach using the heat semigroup on $\mathbb{F}_2^n$ which would also yield such a base graph, as well as a closely related harmonic-analytic question in~\cite[Section~5]{MN14} that remains open even for uniformly convex Banach spaces.

\begin{question}\label{Q:CAT quotient} Let $V_n\subset \mathbb{F}_2^n$ be linear subspaces (namely, polars of asymptotically good codes) as in~\cite{KN06} and above. Is it true that for every Alexandrov space of nonpositive curvature $(X,d_X)$, for every $f:\mathbb{F}_2^n/V_n\to X$ we have
\begin{equation}\label{eq:code CAT0}
\frac{1}{4^n}\sum_{(x,y)\in \mathbb{F}_2^n\times \mathbb{F}_2^n}d_X\big(f(x+V_n),f(y+V_n)\big)^2\lesssim \frac{1}{n2^n}\sum_{j=1}^n \sum_{x\in \mathbb{F}_2^n}d_X\big(f(x+e_j+V_n),f(x+V_n)\big)^2.
\end{equation}
\end{question}
Despite the fact that the proof of~\eqref{eq:code l2}  in~\cite{KN06} is Fourier-analytic (this is so also for the variants in~\cite{MN14}), we believe that establishing its analogue~\eqref{eq:code CAT0} for Alexandrov spaces of nonpositive curvature  is  currently the most viable approach towards Question~\ref{Q:do expanders exist}. Nevertheless, it seems that a substantial new idea is required here.

It should be noted that while~\cite{KN06} shows that quotient Hamming graphs $\{\mathbb{F}_2^n/V_n\}_{n=1}^\infty$ yield a solution of the Hilbertian case of Gromov's question that we quoted earlier, we do not know if this inherently does not follow from a reduction  to the case of classical expanders, as formulated in the following natural geometric question.

\begin{question}\label{Q:does quotient contain expander} Continuing with the above notation, does there exist a (bounded degree, classical) expander $\{\G(n)\}_{n=1}^\infty$ such that $\bigsqcup_{n=1}^\infty \G(n)$ embeds coarsely into $\bigsqcup_{n=1}^\infty (\mathbb{F}_2^n/V_n)$?
\end{question}

\subsection{Generalized roundness} The first proof that there exists a metric space which does not embed coarsely into a Hilbert space was obtained by  Dranishnikov, Gong, Lafforgue, and Yu~\cite{DGLY02}. They showed that this is so for coarse embeddings into any metric space that satisfies a classical condition, called generalized roundness $p\in (0,\infty)$, which was introduced and used by Enflo~\cite{Enf69-smirnov} to answer an  old question of Smirnov~\cite{Gor59} whether every metric space embeds uniformly into a Hilbert space (strictly speaking, \cite{DGLY02} considered only the case $p=2$, but the argument works mutatis mutandis for any $p>0$). A unified treatment of these approaches, so as to rule out coarse and uniform embeddings simultaneously, appears in~\cite{KMOW12}. As noted by several authors~\cite{Rob98,LP06,PW09,KMOW12}, a "vanilla" application of the approach of~\cite{DGLY02} fails to apply to targets that are  general Alexandrov spaces of nonpositive curvature, i.e., to prove Theorem~\ref{thm:cat0 special case}. Indeed, a combination of the characterization of generalized roundness in~\cite{LTW97} with the work~\cite{FH74} shows that there are Alexandrov spaces of nonpositive curvature that do not have positive generalized roundness; specifically, this is so for the quaternionic hyperbolic space (and not so for the real and complex hyperbolic spaces~\cite{Gan67,FH74}). However, the quaternionic hyperbolic space embeds coarsely into a Hilbert space (e.g.~by~\cite{Sel92}), so the conclusion of Theorem~\ref{thm:cat0 special case} does hold for it by a direct reduction to the case of Hilbertian targets. In other words, the correct setting of the method of~\cite{DGLY02} is those spaces that embed coarsely into some metric space of positive generalized roundness rather than those spaces that have positive generalized roundness themselves. A  decisive demonstration that the approach of~\cite{DGLY02} is inherently ill-suited for proving Theorem~\ref{thm:cat0 special case} is that, by combining~\cite{DGLY02} with the classical Schoenberg embedding criterion~\cite{Sch38}, one sees that if a metric space has positive generalized roundness, then it embeds coarsely into a Hilbert space. Hence the aforementioned Gromov--Kondo spaces are  Alexandrov spaces of nonpositive curvature that do not embed coarsely into any metric space of positive generalized roundness. (Incidently, this answers questions that were posed in~\cite[page~155]{LP06} and~\cite[page~10]{PW09}, but it seems to have gone  unnoticed that~\cite{Kon12} resolves them.)

\subsection{Flat tori} For every $n\in \N$ let $\langle\cdot,\cdot\rangle:\R^n\times \R^n\to \R$ be the standard scalar product on $\R^n$, which induces the Hilbert space $\ell_2^n$. For each $n\in \N$ fix a lattice $\Lambda_n\subset \R^n$ for which the dual lattice $\Lambda_n^*=\bigcap_{y\in \Lambda_n}\{x\in \R^n:\ \langle x,y\rangle\in \Z\}$ is a $O(\sqrt{n})$-net of $\ell_2^n$, namely $\|x-y\|_2\gtrsim \sqrt{n}$ for all distinct $x,y\in \Lambda_n^*$, and for every $z\in \R^n$ there exists $w\in \Lambda_n^*$ such that $\|z-w\|_2\lesssim \sqrt{n}$. The fact that such lattices exist is a classical theorem of Rogers~\cite{Rog50}; see also~\cite{But72,Bou87b} for other proofs.

The reasoning in~\cite[Section~4]{KN06} implies that if we endow the Abelian group $\R^n/\Lambda_n$ with the  (flat torus) quotient  Riemannian metric $d_{\R^n/\Lambda_n}:(\R^n/\Lambda_n)\times (\R^n/\Lambda_n)\to [0,\infty)$ that is induced by the $\ell_2^n$ metric, then the Pythagorean product
\begin{equation}\label{eq:def torus product}
\mathfrak{T}\eqdef\Bigg(\bigoplus_{n=1}^\infty \big(\R^n/\Lambda_n,d_{\R^n/\Lambda_n}\big)\Bigg)_2
\end{equation}
does not embed coarsely into $L_1(\R)$, and hence also it does not embed coarsely into a Hilbert space. This is so because by (a change of variable in)~\cite[Lemma~9]{KN06}, if we let $\mu_n$ denote the normalized Riemannian volume measure on the torus $\R^n/\Lambda_n$ and $\gamma_n$ denote the standard Gaussian measure on $\R^n$, then every measurable $\phi:\R^n/\Lambda_n\to L_1(\R)$ satisfies
\begin{equation}\label{eq:quote torus}
\iint_{(\R^n/\Lambda_n)\times (\R^n/\Lambda_n)}\|\phi(u)-\phi(v)\|_1\ud\mu_n(u)\ud\mu_n(v)\lesssim \iint_{(\R^n/\Lambda_n)\times \R^n} \Big\|\phi\Big(w+\frac{z}{\sqrt{n}}+\Lambda_n\Big)-\phi(w)\Big\|_1\ud\mu_n(w)\ud\gamma_n(z).
\end{equation}
If in addition we have $\omega(d_{\R^n/\Lambda_n}(u,v))\le \|\phi(u)-\phi(v)\|_1\le \Omega(d_{\R^n/\Lambda_n}(u,v))$ for all $u,v\in \R^n/\Lambda_n$ and some nondecreasing moduli $\omega,\Omega:[0,\infty)\to [0,\infty)$, then it follows from~\eqref{eq:quote torus} that $\omega(c\sqrt{n})\lesssim \Omega(1)$, where $c>0$ is a universal constant. For this computation, use the proof of~\cite[Lemma~10]{KN06}  to deduce that $d_{\R^n/\Lambda_n}(u,v)\gtrsim \sqrt{n}$ for a constant proportion (with respect to $\mu_n\times \mu_n$) of $(u,v)\in (\R^n/\Lambda_n)\times (\R^n/\Lambda_n)$, combined with the fact that $\|\phi(a)-\phi(b)\|_1\le \Omega(1)(d_{\R^n/\Lambda_n}(a,b)+1)$ for every $(a,b)\in (\R^n/\Lambda_n)\times (\R^n/\Lambda_n)$, which follows from a standard application of the triangle inequality.

The proof of~\eqref{eq:quote torus} in~\cite{KN06} is Fourier-analytic, and as such it does not apply when $L_1(\R)$ is replaced by a target which is not a Banach space. It would be very interesting if one could find a way to reason analogously about embeddings of $\mathfrak{T}$ into an Alexandrov space of nonpositive curvature.  Investigating the geometry of  $\mathfrak{T}$ is interesting in its own right, and some aspects of this were discussed in~\cite{Nao14,ANN16}. It follows immediately from~\eqref{eq:def torus product} that $\mathfrak{T}$ is an Alexandrov space of nonnegative curvature, and we do not know if every (classical) expander is also an expander with respect to $\mathfrak{T}$, or  even if $\mathfrak{T}$ admits some expander. Determining whether $\mathfrak{T}$ has sharp metric cotype $q$ for any $q\ge 2$ would also be worthwhile.

\subsection{Metric Kwapie\'n--Sch\"utt} Metric $\mathrm{KS}$ inequalities  are metric invariants that were introduced in~\cite{NS16-KS} (the nomenclature refers to the works~\cite{KS85,KS89} of Kwapie\'n and Sch\"utt). The reasoning of~\cite[Section~3]{NS16-KS} shows that if  a metric space satisfies such an inequality, then it does not admit a coarse embedding of $\ell_2(\ell_p)$ for any $p\in [1,2)$. Thus, $\ell_2(\ell_p)$ fails to satisfy any metric $\mathrm{KS}$ inequality despite the fact that its $2$-barycentric constant tends to $1$ as $p\to 2^-$. Also, it was shown in~\cite[Section~3]{NS16-KS} that if $p\in (2,\infty)$, then $\ell_p$ fails to satisfy   any metric $\mathrm{KS}$ inequality even though it is $p$-barycentric with constant  $1$. We do not know how to interpret for spaces that are not Banach spaces the Fourier-analytic proof of~\cite[Theorem~2.1]{NS16-KS}  that a Hilbert space satisfies the quadratic metric $\mathrm{KS}$ inequality. In particular, we do not know if  Alexandrov spaces of nonpositive curvature satisfy the quadratic metric $\mathrm{KS}$ inequality; the above examples show that in the (in our opinion unlikely) event that this were true, then its proof must use the fact that such spaces are $2$-barycentric, and moreover that their $2$-barycentric constant equals $1$, i.e., for any $\e>0$ it fails for spaces that are either $2$-barycentric with constant $1+\e$, or are $(2+\e)$-barycentric with constant $1$.

\subsection{Stable metrics and interlacing graphs} Following Garling's definition~\cite{Gar82}, which extends the fundamental contribution of Krivine and Maurey~\cite{KM81} in the setting of Banach spaces,  a metric space $(X,d_X)$ is said to be stable if
\begin{equation}\label{eq:def stable}
\lim_{m\to \infty}\lim_{n\to \infty} d_X(x_m,y_n)=\lim_{n\to \infty} \lim_{m\to \infty} d_X(x_m,y_n)
\end{equation}
for any bounded sequences $\{x_m\}_{m=1}^\infty,\{y_n\}_{n=1}^\infty\subset X$ for which both iterated limits in~\eqref{eq:def stable} exist.  It is simple to check that locally compact metric spaces and Hilbert spaces are stable; see the survey~\cite{Bau14} for more examples and non-examples.

In~\cite{Kal07}, Kalton investigated the use of stability of metrics to rule out coarse embeddings, profoundly building on (and inspired by) a classical work of Raynaud~\cite{Ray83} which studied stable metrics in the context of uniform embeddings. Among the results of~\cite{Kal07}, Kalton considered for each $r\in \N$ the following infinite graph, which we denote by $\mathsf{K}_r(\N)$. The vertices of $\mathsf{K}_r(\N)$ are the subsets of $\N$ of cardinality $r$. Two vertices $\sigma,\tau$ of $\mathsf{K}_r(\N)$ are declared to form an edge if they interlace, i.e., if one could write $\sigma=\{m_1,\ldots,m_r\}$ and $\tau=\{n_1,\ldots,n_r\}$ such that either $m_1\le n_1\le m_2\le n_2\le\ldots\le m_r\le n_r$ or $n_1\le m_1\le n_2\le m_2\le\ldots\le n_r\le m_r$. Below, $\mathsf{K}_r(\N)$  will always be understood to be equipped with the shortest-path metric that this graph structure induces. By a short iterative application of the  definition of stability of a metric space (see~\cite[Lemma~9.19]{BL00}), it was shown in~\cite{Kal07} that  $\bigsqcup_{r=1}^\infty \mathsf{K}_r(\N)$  does not embed coarsely into any stable metric space.
\begin{question}\label{Q:embed into stable}
Does every Alexandrov space of nonpositive curvature embed coarsely  into some stable metric space?
\end{question}
By the aforementioned result from~\cite{Kal07}, a positive answer to Question~\ref{Q:embed into stable} would imply that $\bigsqcup_{r=1}^\infty \mathsf{K}_r(\N)$ does not embed coarsely into any Alexandrov space of nonpositive curvature. This would yield a way to prove  Theorem~\ref{thm:cat0 special case} that is entirely different from how we proceed here, though note that if $r\in \{2,3,\ldots\}$, then $\mathsf{K}_r(\N)$ is not locally finite, and this is an inherent attribute of this approach because locally compact metric spaces are stable. Beyond its mere applicability to potentially proving Theorem~\ref{thm:cat0 special case}, a positive answer to Question~\ref{Q:embed into stable} would be important in its own right, as a nontrivial structural consequence of nonpositive curvature (which,  by~\cite{ANN16}, fails for nonnegative curvature). In our opinion, if true, then this would be a fundamental result that will likely have applications elsewhere, though we suspect that the answer to Question~\ref{Q:embed into stable} is negative, and that proving this might be quite accessible. Regardless, it would be interesting to determine if $\bigsqcup_{r=1}^\infty \mathsf{K}_r(\N)$ can embed coarsely into some Alexandrov space of nonpositive curvature.

The above idea of~\cite{Kal07} (partially building on~\cite{Ray83})  inspired a series of investigations~\cite{Kal07,KR08,Kal11,BLS18,LPP18,BLMS18} over recent years that led  to major coarse non-embeddability results for certain Banach spaces, starting with Kalton's incorporation~\cite{Kal07} of Ramsey-theoretic  reasoning which led to (among other things) his proof in~\cite{Kal07} that $c_0$ does not embed coarsely into any reflexive Banach space. We will not survey these ideas here, and only state that they rely on the linear theory in multiple ways, so their relevance to the setting of Theorem~\ref{thm:cat0 special case} is questionable.

\section{Proof of Theorem \ref{thm:main}}

Prior to carrying out the proof of Theorem~\ref{thm:main}, we will quickly present a probabilistic tool on which it relies.

\subsection{Nonlinear martingales} \label{subsec:martingales}  We will next describe  basic facts about martingales that don't necessarily take values in a Banach space. There are multiple ways to extend the linear theory of martingales, but we will only discuss one such approach, namely following that of~\cite[Section~2]{MN13},  on which we will rely in the proof of Theorem~\ref{thm:main} . There is substantial literature on martingales in metric spaces (including~\cite{Dos62,Eme89,EH99,Sturm02,CS08}), but because our sole purpose here is to use nonlinear martingales to prove a purely geometric result rather than the (independently interesting) foundational probabilistic perspective, we will not delve into the general theory and comparison of different approaches.

Let $\Omega$ be a finite set and let $\ms{F}\subseteq2^\Omega$ be a $\sigma$-algebra. For every $\omega\in \Omega$, define $\ms{F}(\omega)\subseteq\Omega$ to be the unique atom of $\ms{F}$ for which $\omega\in\ms{F}(\omega)$. Suppose that $\mu:2^\Omega\to [0,1]$ is a probability measure of full support, i.e., $\mu(\{\omega\})>0$ for every $\omega\in \Omega$. Let $X$ be a set equipped with a barycenter map $\mathfrak{B}:\ms{P}_X^{<\infty}\to X$. For a function  $Z:\Omega\to X$, define its $\mu$-conditional barycenter $\mathfrak{B}_\mu(Z|\ms{F}):\Omega\to X$ by setting
\begin{equation} \label{eq:conditionalbarycenter}
\forall\, \omega\in \Omega,\qquad \mathfrak{B}_\mu\big(Z|\ms{F}\big)(\omega) = \mathfrak{B} \Bigg(\frac{1}{\mu\big(\ms{F}(\omega)\big)} \sum_{a\in\ms{F}(\omega)} \mu(a) \delta_{Z(a)} \Bigg).
\end{equation}
Fix $n\in \N$ and  $\sigma$-algebras $\ms{F}_0\subset \ms{F}_1\subset \ldots\subset \ms{F}_{n}\subset 2^\Omega$ (a filtration). Say that a sequence of  mappings $\{Z_i:\Omega\to X\}_{i=0}^n$ ($X$-valued random variables)   is a $\mu$-martingale with respect to the filtration $\{\ms{F}_i\}_{i=0}^n$ if
\begin{equation}\label{eq:martingale condition}
\forall\, i\in\{1,\ldots,n\},\qquad \mathfrak{B}_\mu(Z_i|\ms{F}_{i-1})=Z_{i-1}.
\end{equation}
We will use below the following simple monotonicity property for martingales in barycentric metric space.

\begin{lemma} \label{lem:approximation} Fix $q\ge 1$ and $n\in \N$.  Let  $(X,d_X)$ be a $q$-barycentric metric space. Suppose that $\mu$ is a probability measure of full support on a finite set $\Omega$ and that  $\{Z_i:\Omega\to X\}_{i=0}^n$ is a $\mu$-martingale with respect to a filtration $\{\ms{F}_i\}_{i=0}^n$. Then,
\begin{equation} \label{eq:approximation}
\forall\, x\in X,\qquad \int_\Omega d_X(Z_0,x)^q\ud \mu\le \int_\Omega d_X(Z_1,x)^q\ud \mu\le \ldots\le  \int_\Omega d_X(Z_n,x)^q \ud\mu.
\end{equation}
\end{lemma}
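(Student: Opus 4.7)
The plan is to prove the chain of inequalities by establishing the single-step monotonicity $\int_\Omega d_X(Z_{i-1},x)^q \ud\mu \le \int_\Omega d_X(Z_i,x)^q \ud\mu$ for each $i\in\{1,\ldots,n\}$, and then concatenating the resulting bounds. Since the filtration decomposes $\Omega$ into the atoms of $\ms{F}_{i-1}$ and both sides of this inequality are additive over such a decomposition, it suffices to fix an atom $A\in \ms{F}_{i-1}$ and to prove the local inequality
\begin{equation*}
\mu(A)\, d_X\big(Z_{i-1}(\omega),x\big)^q \;\le\; \sum_{a\in A} \mu(a)\, d_X\big(Z_i(a),x\big)^q
\end{equation*}
for any chosen $\omega\in A$; note that $Z_{i-1}$ is constant on $A$ because the right-hand side of~\eqref{eq:conditionalbarycenter} depends only on the atom $\ms{F}(\omega)$, and the martingale condition~\eqref{eq:martingale condition} identifies $Z_{i-1}$ with $\mathfrak{B}_\mu(Z_i|\ms{F}_{i-1})$.

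To prove this local inequality, introduce the finitely-supported probability measure on $X$ given by $\nu_A \eqdef \frac{1}{\mu(A)}\sum_{a\in A}\mu(a)\d_{Z_i(a)}$, so that by~\eqref{eq:conditionalbarycenter} and~\eqref{eq:martingale condition} one has $Z_{i-1}(\omega)=\mathfrak{B}(\nu_A)$ for every $\omega\in A$. Apply the $q$-barycentric inequality~\eqref{eq:pbarycentric} to the measure $\nu_A$ and the point $x\in X$ to obtain
\begin{equation*}
d_X\big(\mathfrak{B}(\nu_A),x\big)^q + \frac{1}{\beta^q}\int_X d_X\big(\mathfrak{B}(\nu_A),y\big)^q \ud\nu_A(y) \;\le\; \int_X d_X(x,y)^q \ud\nu_A(y).
\end{equation*}
Discarding the nonnegative middle term on the left-hand side (which is permissible since $\beta\in[1,\infty)$ and the integrand is $\ge 0$) and unfolding the definition of $\nu_A$ on the right-hand side yields exactly the local inequality after multiplying through by $\mu(A)$.

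Summing over all atoms $A$ of $\ms{F}_{i-1}$ yields the one-step monotonicity, and iterating for $i=1,\ldots,n$ produces the full chain~\eqref{eq:approximation}. There is no serious obstacle: the only point that requires care is to observe that the definition~\eqref{eq:conditionalbarycenter} of the conditional barycenter combined with~\eqref{eq:martingale condition} forces $Z_{i-1}$ to be constant on each atom of $\ms{F}_{i-1}$, which is precisely what legitimizes viewing $Z_{i-1}(\omega)$ as the (fixed) $\mathfrak{B}$-barycenter to which the $q$-barycentric inequality~\eqref{eq:pbarycentric} is applied. The key conceptual content of the argument is thus a straightforward localization of~\eqref{eq:pbarycentric} to atoms; the nonnegative term $\frac{1}{\beta^q}\int d_X(\mathfrak{B}(\nu_A),y)^q\ud\nu_A$ that we throw away encodes the "variance contraction" that will be genuinely used elsewhere (e.g.~in the nonlinear Pisier martingale inequality of~\cite{MN13} invoked in the proof of Theorem~\ref{thm:main}), but is unnecessary for the pointwise monotonicity asserted here.
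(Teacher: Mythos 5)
Your proof is correct and follows essentially the same route as the paper: apply the $q$-barycentric inequality \eqref{eq:pbarycentric} on each atom of $\ms{F}_{i-1}$ to the conditional-barycenter measure, discard the nonnegative "variance" term, and sum over atoms to get one-step monotonicity. The only cosmetic difference is that the paper does not bother to name the measure $\nu_A$; the justification for dropping the middle term is simply its nonnegativity (no need to invoke $\beta\ge 1$).
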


\begin{proof} Fix $x\in X$ and $i\in\{0,1,\ldots,n-1\}$. Suppose that $\{A_j\}_{j=1}^k$ are the atoms of $\ms{F}_i$. For each $j\in \{1,\ldots,k\}$ and $\omega\in A_j$,
\begin{equation*} \label{eq:usepbarycinlemma}
\begin{split}
d_X(Z_i(\omega),x)^q  \stackrel{\eqref{eq:conditionalbarycenter}\wedge \eqref{eq:martingale condition}}{=} d_X\bigg(\mathfrak{B}\Big(\frac{1}{\mu(A_j)}  \sum_{a\in A_j}  \mu(a)\delta_{Z_{i+1}(a)} \Big),x \bigg)^q  \stackrel{\eqref{eq:pbarycentric}}{\leq} \frac{1}{\mu(A_j)} \sum_{a\in A_j} \mu(a) d_X(Z_{i+1}(a),x)^q.
\end{split}
\end{equation*}
Consequently,
\begin{equation*}\label{eq:on each atom}
\forall\, j\in \{1,\ldots,k\},\qquad \int_{A_j} d_X(Z_i,x)^q\ud \mu \le \sum_{a\in A_j} \mu(a) d_X(Z_{i+1}(a),x)^q.
\end{equation*}
As $\{A_j\}_{j=1}^k$ is a partition of $\Omega$, by summing this over $j\in \{1,\ldots,k\}$ we get that $\int_\Omega d_X(Z_i,x)^q\ud\mu\le \int_\Omega d_X(Z_{i+1},x)^q\ud\mu$.
\end{proof}

A key ingredient of the proof of Theorem~\ref{thm:main} is the following proposition, which is due to~\cite[Lemma~2.1]{MN13}; it  is analogous to an important inequality of Pisier~\cite{Pis75} for martingales with values in uniformly convex Banach spaces.

\begin{proposition} [Pisier's martingale cotype inequality in $q$-barycentric metric spaces] \label{prop:pisier} Fix $q>0$ and $n\in \N$. Let $(X,d_X)$ be a $q$-barycentric metric space with constant $\beta>0$. Suppose that $\mu$ is a probability measure of full support on a finite set $\Omega$ and that  $\{Z_i:\Omega\to X\}_{i=0}^n$ is a $\mu$-martingale with respect to a filtration $\{\emptyset,\Omega\}=\ms{F}_0\subset\ms{F}_1\subset\ldots\subset \ms{F}_n\subset 2^\Omega$. Then,
\begin{equation} \label{eq:pisier}
\forall\, x\in X,\qquad \int_\Omega d_X(Z_0,x)^q\ud\mu + \frac{1}{\beta^p} \sum_{i=1}^n \int_\Omega d_X(Z_i, Z_{i-1})^q\ud\mu \leq \int_\Omega d_X(Z_n,x)^q\ud\mu.
\end{equation}
\end{proposition}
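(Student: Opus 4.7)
The plan is to reduce the $n$-step inequality to a one-step conditional inequality and then telescope. Specifically, for each $i\in\{1,\ldots,n\}$ I will prove
\begin{equation}\label{eq:one step}
\int_\Omega d_X(Z_{i-1},x)^q\ud\mu+\frac{1}{\beta^q}\int_\Omega d_X(Z_i,Z_{i-1})^q\ud\mu\le \int_\Omega d_X(Z_i,x)^q\ud\mu,
\end{equation}
for every $x\in X$. Granted \eqref{eq:one step}, the proposition follows immediately: summing \eqref{eq:one step} over $i\in\{1,\ldots,n\}$ makes the terms $\int_\Omega d_X(Z_i,x)^q\ud\mu$ for $i\in\{1,\ldots,n-1\}$ cancel between the two sides, leaving exactly the bound \eqref{eq:pisier}.

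To establish \eqref{eq:one step}, I would fix $i$ and work atom by atom on $\ms{F}_{i-1}$. Let $\{A_j\}_{j=1}^k$ be the atoms of $\ms{F}_{i-1}$, and note that $Z_{i-1}$ is constant on each $A_j$; call the common value $z_j$. The martingale condition \eqref{eq:martingale condition} together with \eqref{eq:conditionalbarycenter} asserts
\[
z_j=\mathfrak{B}(\mu_j),\qquad\text{where}\quad \mu_j\eqdef \frac{1}{\mu(A_j)}\sum_{a\in A_j}\mu(a)\delta_{Z_i(a)}\in \ms{P}_X^{<\infty}.
\]
Now I would plug this specific $\mu_j$ and the fixed point $x$ into the $q$-barycentric inequality \eqref{eq:pbarycentric}, obtaining
\[
d_X(z_j,x)^q+\frac{1}{\beta^q}\cdot \frac{1}{\mu(A_j)}\sum_{a\in A_j}\mu(a)\,d_X(z_j,Z_i(a))^q\le \frac{1}{\mu(A_j)}\sum_{a\in A_j}\mu(a)\,d_X(x,Z_i(a))^q.
\]
Because $z_j=Z_{i-1}(a)$ for every $a\in A_j$, multiplying through by $\mu(A_j)$ rewrites this as
\[
\int_{A_j}d_X(Z_{i-1},x)^q\ud\mu+\frac{1}{\beta^q}\int_{A_j}d_X(Z_i,Z_{i-1})^q\ud\mu\le \int_{A_j}d_X(Z_i,x)^q\ud\mu.
\]
Summing over $j\in\{1,\ldots,k\}$ (a partition of $\Omega$) yields \eqref{eq:one step}.

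The main and essentially only substantive step is the correct identification of the conditional distribution $\mu_j$ so that the barycentric inequality can be applied at each atom; everything else is bookkeeping. There is no genuine obstacle here beyond keeping track of how the probability weights $\mu(a)$ interact with the averages $1/\mu(A_j)$, and the initial condition $\ms{F}_0=\{\emptyset,\Omega\}$ is used only to ensure that $Z_0$ is a single point so that $\int_\Omega d_X(Z_0,x)^q\ud\mu=d_X(Z_0,x)^q$ is a constant (this plays no role in the argument itself, but is needed for the statement to match the usual linear counterpart). It is worth noting that Lemma~\ref{lem:approximation} is actually a consequence of dropping the $\beta^{-q}$ term in \eqref{eq:one step}, so no additional monotonicity input is needed.
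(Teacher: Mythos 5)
Your proof is correct, and it follows the same approach that the paper attributes to the cited source: the paper states that Proposition~\ref{prop:pisier} is due to~\cite[Lemma~2.1]{MN13} and describes that proof as "an iterative application of the barycentric condition~\eqref{eq:pbarycentric}," which is precisely what your atom-by-atom one-step bound, summed over atoms and then telescoped over $i$, accomplishes. Two minor remarks: the telescoping works exactly as you say (writing $a_i=\int d_X(Z_i,x)^q\,d\mu$, the one-step estimate is $a_{i-1}+\beta^{-q}\int d_X(Z_i,Z_{i-1})^q\,d\mu\le a_i$, and summing leaves only $a_0$ and $a_n$), and your observation that Lemma~\ref{lem:approximation} is the special case of your one-step inequality with the $\beta^{-q}$ term dropped is accurate — indeed the paper's proof of Lemma~\ref{lem:approximation} runs the same atom-by-atom argument but discards the second term of~\eqref{eq:pbarycentric}. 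The hypothesis $\ms{F}_0=\{\emptyset,\Omega\}$ is, as you note, not actually needed for the argument; it just makes $Z_0$ constant so that the left-hand side has the usual form.
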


\subsection{Cotype}

Fix $n\in\N$. Denote the uniform probability measure on $\{-1,1\}^n$ by $\mu$. For $i\in \{1,\ldots,n\}$ let  $\ms{F}_i\subset 2^{\{-1,1\}^n}$ be the $\sigma$-algebra that is generated by  the coordinate functions $\e_1,\ldots,\e_i:\{-1,1\}^n\to\{-1,1\}$. Write also $\ms{F}_0=\{\emptyset,\{-1,1\}^n\}$.

Suppose that $(X,d_X)$ is a metric space equipped with a barycenter map $\mathfrak{B}:\mathscr{P}_X^{<\infty}\to X$. For each $h:\{-1,1\}^n\to X$ we recursively construct a sequence of functions $\{\mathsf{E}_ih:\{-1,1\}^n\to X\}_{i=0}^n$ by setting $\mathsf{E}_n h=h$ and for every $i\in\{0,1,\ldots,n-1\}$,
\begin{equation} \label{eq:defineEi}
\forall\, \e\in \{-1,1\}^n,\qquad (\mathsf{E}_ih)(\e) \eqdef \mathfrak{B}_\mu\big( \mathsf{E}_{i+1}h \big| \mathscr{F}_i\big)(\e).
\end{equation}
By definition, $\{\mathsf{E}_ih\}_{i=0}^n$ is a $\mu$-martingale with respect to the filtration $\{\mathscr{F}_i\}_{i=0}^n$. In particular, for each $\e\in\{-1,1\}^n$ the value $(\mathsf{E}_i h)(\e)$ depends only on $\e_1,\ldots,\e_i$. We will therefore sometimes write $(\msf{E}_ih)(\e_1,\ldots,\e_i)$ instead of $(\msf{E}_ih)(\e)$.

Fix $m,n\in\N$ and $x\in\Z_m^n$. For every function $f:\Z_{m}^n \to X$, denote by $f_x:\{-1,1\}^n \to X$ the function that is given by
\begin{equation}\label{eq:translated f}
\forall\, \e\in\{-1,1\}^n,\qquad f_x(\e) = f(x+\e).
\end{equation}
 We record here the following simple identity for ease of later reference.

\begin{lemma} \label{lem:identityEi}
Fix $m,n\in\N$, $x\in\Z_m^n$ and a fuction $f:\Z_m^n\to X$. For every $i\in\{1,\ldots,n\}$, $j\in\{1,\ldots,i\}$ and $\e\in\{-1,1\}^n$, we have
\begin{equation} \label{eq:identityEi}
\big( \mathsf{E}_i f_{x-2\e_je_j}\big)(\e) = \big( \mathsf{E}_i f_x\big) (\e_1,\ldots,\e_{j-1},-\e_j,\e_{j+1},\ldots,\e_i).
\end{equation}
\end{lemma}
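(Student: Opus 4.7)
The plan is to establish \eqref{eq:identityEi} by downward induction on $i$, starting from the base case $i=n$ and working down to $i=j$. Throughout, the key structural observation is that if one flips the $j$-th coordinate of $\e$, the point $x+\e\in\Z_m^n$ changes in precisely the same coordinate by $-2\e_j$, so swapping $x$ to $x-2\e_j e_j$ and $\e_j$ to $-\e_j$ leaves the argument of $f$ unchanged. This symmetry is exactly the content of the identity at the level of $f$ itself, and the induction will propagate it through the conditional barycenter operators $\mathsf{E}_i$.

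For the base case $i=n$, since $\mathsf{E}_n h=h$ for every $h:\{-1,1\}^n\to X$, the claim reduces to $f_{x-2\e_je_j}(\e)=f_x(\e_1,\ldots,-\e_j,\ldots,\e_n)$, which is immediate from the definition~\eqref{eq:translated f}: both sides equal $f(y)$ where $y_k=x_k+\e_k$ for $k\neq j$ and $y_j=x_j-\e_j$. For the inductive step, assume the identity holds at level $i+1$ (with $i+1\le n$ and $i\ge j$) and consider level $i$. Write $A=\mathscr{F}_i(\e)$ and $A'=\mathscr{F}_i(\e_1,\ldots,-\e_j,\ldots,\e_n)$; explicitly, $A=\{a\in\{-1,1\}^n:a_k=\e_k\ \forall\,k\le i\}$ while $A'$ is defined analogously but with $a_j=-\e_j$. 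Because $j\le i$, the map $a\mapsto a^{(j)}\eqdef(a_1,\ldots,-a_j,\ldots,a_n)$ is a bijection $A\to A'$ that preserves $\mu$ (as $\mu$ is uniform). Invoking the inductive hypothesis pointwise yields $(\mathsf{E}_{i+1}f_{x-2\e_je_j})(a)=(\mathsf{E}_{i+1}f_x)(a^{(j)})$ for every $a\in A$, so the two empirical measures to which $\mathfrak{B}$ is applied in computing $(\mathsf{E}_if_{x-2\e_je_j})(\e)$ and $(\mathsf{E}_if_x)(\e_1,\ldots,-\e_j,\ldots,\e_i)$ via~\eqref{eq:defineEi} and~\eqref{eq:conditionalbarycenter} are identical, completing the induction.

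There is no genuine obstacle here: the statement is a bookkeeping lemma asserting that the symmetry $(x,\e)\mapsto(x-2\e_je_j, \e^{(j)})$ of the argument of $f$ commutes with the conditional-barycenter operators $\mathsf{E}_i$ for all $i\ge j$. The commutation is forced by two features that both hold precisely when $j\le i$: first, the uniform measure $\mu$ makes coordinate-flipping measure-preserving on every atom of $\mathscr{F}_i$; second, the coordinate $j$ is already resolved by the conditioning $\sigma$-algebra $\mathscr{F}_i$, so the flip acts compatibly at the level of atoms rather than within atoms. The only care required is to ensure the induction indices are tracked correctly.
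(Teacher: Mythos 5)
Your proof is correct and follows essentially the same route as the paper's: reverse induction on $i$, with the base case $i=n$ checked directly from~\eqref{eq:translated f} and the inductive step showing that the two finitely supported measures fed to $\mathfrak{B}$ in~\eqref{eq:defineEi} coincide. The only cosmetic difference is that you work with the full atoms of $\mathscr{F}_i$ and the measure-preserving coordinate-flip bijection between them, whereas the paper first collapses the conditional barycenter to a two-point measure in the $i$-th coordinate (using that $\mathsf{E}_i f_x$ depends only on the first $i$ coordinates); both versions are fine.
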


\begin{proof}
The proof proceeds  by reverse induction on $i\in \{0,\ldots,n\}$. For $i=n$, we have
\begin{equation*}
\big( \mathsf{E}_n f_{x-2\e_je_j}\big)(\e)  = f_{x-2\e_je_j}(\e) =f(x-2\e_je_j+\e)  = f_x(-2\e_je_j+\e)  = \big( \mathsf{E}_n f_x\big) (\e_1,\ldots,\e_{j-1},-\e_j,\e_{j+1},\ldots,\e_n).
\end{equation*}
For the inductive step, fix $i\in\{1,\ldots,n\}$ and notice that for every  $j\in\{1,\ldots,i-1\}$, we have
\begin{multline*}
\big( \mathsf{E}_{i-1}  f_{x-2\e_je_j}\big)(\e_1,\ldots,\e_{i-1})  = \mathfrak{B}\left(\frac{1}{2} \delta_{(\msf{E}_if_{x-2\e_je_j})(\e_1,\ldots,\e_{i-1},-1)}+\frac{1}{2} \delta_{(\msf{E}_if_{x-2\e_je_j})(\e_1,\ldots,\e_{i-1},1)} \right) \\  = \mathfrak{B}\left( \frac{1}{2}\delta_{(\msf{E}_if_x)(\e_1,\ldots,\e_{j-1},-\e_j,\e_{j+1},\ldots,\e_{i-1},-1)} + \frac{1}{2}\delta_{(\msf{E}_if_x)(\e_1,\ldots,\e_{j-1},-\e_j,\e_{j+1},\ldots,\e_{i-1},1)} \right) = (\msf{E}_{i-1}f_x)(\e_1,\ldots,\e_{j-1},-\e_j,\e_{j+1},\ldots,\e_{i-1}),
\end{multline*}
where the first and last equalities use the definition \eqref{eq:defineEi} and the middle equality uses the inductive hypothesis.
\end{proof}

\begin{proof}[Proof of Theorem \ref{thm:main}] Fix $m,n\in\N$ and a function  $f:\Z_{4m}^n\to X$. It will be notationally convenient to  prove the desired estimate~\eqref{eq:main} with $m$ replaced by $2m$, i.e., our goal is now to show that the following inequality holds true.
\begin{equation} \label{eq:main'}
\Bigg(\sum_{i=1}^n \sum_{x\in\Z_{4m}^n} d_X\big( f(x+2me_i),f(x)\big)^q\Bigg)^{\frac{1}{q}} \le \Big(4n^{\frac{1}{q}}+2\beta m\Big)\Bigg(\frac{1}{2^n} \sum_{\e\in\{-1,1\}^n} \sum_{x\in\Z_{4m}^n} d_X\big(f(x+\e),f(x)\big)^q\Bigg)^{\frac{1}{q}}.
\end{equation}

Recalling the notation~\eqref{eq:translated f} and using translation invariance on $\Z_{4m}^n$, for every $i\in \n$ and $\e\in \{-1,1\}^n$ we have
$$
\sum_{x\in\Z_{4m}^n} d_X\big( f_{x+2m\e_i e_i}(\e), f_x(\e)\big)^q=\sum_{x\in\Z_{4m}^n} d_X\big( f_{x+2me_i}(\e), f_x(\e)\big)^q=\sum_{x\in\Z_{4m}^n}  d_X\big( f(x+2me_i),f(x)\big)^q.
$$
Hence,
\begin{equation}\label{eq:use translatation invariance}
\Bigg(\sum_{i=1}^n\sum_{x\in\Z_{4m}^n}  d_X\big( f(x+2me_i),f(x)\big)^q\Bigg)^{\frac{1}{q}}=\Bigg(\frac{1}{2^n} \sum_{i=1}^n\sum_{\e\in\{-1,1\}^n}\sum_{x\in\Z_{4m}^n} d_X\big( f_{x+2m\e_i e_i}(\e), f_x(\e)\big)^q\Bigg)^{\frac{1}{q}}.
\end{equation}
By the triangle inequality in $(X,d_X)$, each of the summands in the right hand side of~\eqref{eq:use translatation invariance} can be bounded as follows.
\begin{equation}\label{eq:use triangle Ei}
d_X\big( f_{x+2m\e_i e_i}(\e), f_x(\e)\big)\le d_X\big( f_{x+2m\e_i e_i}(\e), (\msf{E}_if_{x+2m\e_i e_i})(\e) \big)  + d_X\big( (\msf{E}_i f_{x+2m\e_i e_i})(\e), (\msf{E}_if_x)(\e) \big) + d_X\big( f_{x}(\e), (\msf{E}_if_{x})(\e) \big).
\end{equation}
A substitution of~\eqref{eq:use triangle Ei} into~\eqref{eq:use translatation invariance} in combination with the triangle inequality in $L_q$ gives  the bound
\begin{equation}\label{eq:use triangle in Lq}
\begin{split}
\Bigg(\sum_{i=1}^n&\sum_{x\in\Z_{4m}^n}  d_X\big( f(x+2me_i),f(x)\big)^q\Bigg)^{\frac{1}{q}}\\  &\le 2\Bigg(\frac{1}{2^n} \sum_{i=1}^n\sum_{\e\in\{-1,1\}^n}\sum_{x\in\Z_{4m}^n} d_X\big( f_{x}(\e), (\msf{E}_if_{x})(\e) \big)^q\Bigg)^{\frac{1}{q}}+\Bigg(\frac{1}{2^n} \sum_{i=1}^n\sum_{\e\in\{-1,1\}^n}\sum_{x\in\Z_{4m}^n} d_X\big( (\msf{E}_i f_{x+2m\e_i e_i})(\e), (\msf{E}_if_x)(\e) \big)^q\Bigg)^{\frac{1}{q}},
\end{split}
\end{equation}
where we used the fact that, by translation invariance on $\Z_{4m}^n$ once more, we have
\begin{equation*}
\sum_{x\in \Z_{4m}^n} d_X\big(f_{x+2m\e_i e_i}(\e),(\msf{E}_if_{x+2m\e_i e_i})(\e)\big)^q = \sum_{x\in\Z_{4m}^n} d_X\big( f_x(\e), (\msf{E}_if_x)(\e)\big)^q.
\end{equation*}

To bound the first term on the right hand side of \eqref{eq:use triangle in Lq}, use the triangle inequality in $(X,d_X)$ to deduce that for every $i\in \n$, every $x\in\Z_{4m}^n$ and every $\e\in\{-1,1\}^n$, we have
\begin{equation*} \label{eq:boundfirst1}
d_X\big( f_{x}(\e), (\msf{E}_i f_x)(\e)\big)\leq  d_X\big( f(x+\e),f(x)\big) + d_X\big( (\msf{E}_i f_x)(\e), f(x) \big).
\end{equation*}
Hence, using the triangle inequality in $L_q$ we see that
\begin{equation}\label{eq:triangle tw term first term}
\begin{split}
\Bigg(\frac{1}{2^n} \sum_{i=1}^n&\sum_{\e\in\{-1,1\}^n}\sum_{x\in\Z_{4m}^n} d_X\big( f_{x}(\e), (\msf{E}_if_{x})(\e) \big)^q\Bigg)^{\frac{1}{q}}\\&\le
\Bigg(\frac{1}{2^n} \sum_{i=1}^n\sum_{\e\in\{-1,1\}^n}\sum_{x\in\Z_{4m}^n} d_X\big( f(x+\e),f(x)\big) ^q\Bigg)^{\frac{1}{q}}+\Bigg(\frac{1}{2^n} \sum_{i=1}^n \sum_{\e\in\{-1,1\}^n}\sum_{x\in\Z_{4m}^n} d_X\big( (\msf{E}_i f_x)(\e), f(x) \big)^q\Bigg)^{\frac{1}{q}}\\
&=n^{\frac{1}{q}}\Bigg(\frac{1}{2^n} \sum_{\e\in\{-1,1\}^n}\sum_{x\in\Z_{4m}^n} d_X\big( f(x+\e),f(x)\big) ^q\Bigg)^{\frac{1}{q}}+\Bigg(\frac{1}{2^n} \sum_{i=1}^n \sum_{\e\in\{-1,1\}^n}\sum_{x\in\Z_{4m}^n} d_X\big( (\msf{E}_i f_x)(\e), f(x) \big)^q\Bigg)^{\frac{1}{q}}.
\end{split}
\end{equation}
By Lemma \ref{lem:approximation}, for each fixed $i\in \n$ and  $x\in \Z_{4m}^n$,  we have
\begin{equation} \label{eq:boundfirst2}
\begin{split}
\frac{1}{2^n}\sum_{\e\in\{-1,1\}^n} d_X\big( (\msf{E}_i f_x)(\e), f(x)\big)^q \leq  \frac{1}{2^n}\sum_{\e\in\{-1,1\}^n} d_X  \big( (\msf{E}_nf_x)(\e), f(x)\big)^q  = \frac{1}{2^n}\sum_{\e\in\{-1,1\}^n} d_X\big( f(x+\e),f(x)\big)^q.
\end{split}
\end{equation}
By combining~\eqref{eq:triangle tw term first term} and~\eqref{eq:boundfirst2} we therefore conclude that
\begin{equation}\label{eq:first term done}
\Bigg(\frac{1}{2^n} \sum_{i=1}^n\sum_{\e\in\{-1,1\}^n}\sum_{x\in\Z_{4m}^n} d_X\big( f_{x}(\e), (\msf{E}_if_{x})(\e) \big)^q\Bigg)^{\frac{1}{q}}\le 2n^{\frac{1}{q}}\Bigg(\frac{1}{2^n} \sum_{\e\in\{-1,1\}^n}\sum_{x\in\Z_{4m}^n} d_X\big( f(x+\e),f(x)\big) ^q\Bigg)^{\frac{1}{q}}.
\end{equation}

To bound  the second term on the right hand side of~\eqref{eq:use triangle in Lq}, use the triangle inequality in $(X,d_X)$ to deduce that  for every $i\in \n$, every $\e\in \{-1,1\}^n$ and every $x\in \Z_{4m}^n$, we have
$$
d_X \big( (\msf{E}_i f_{x+2m\e_i e_i})(\e), (\msf{E}_if_x)(\e)\big) \leq  \sum_{s=1}^{m} d_X \big( (\msf{E}_i f_{x-2s\e_ie_i})(\e), (\msf{E}_if_{x-2(s-1)\e_ie_i})(\e) \big).
$$
By combining this with the triangle inequality in $L_q$, we see that
\begin{equation}\label{eq:m triangle}
\begin{split}
\Bigg(\frac{1}{2^n}\sum_{i=1}^n&\sum_{\e\in\{-1,1\}^n}\sum_{x\in\Z_{4m}^n} d_X\big( (\msf{E}_i f_{x+2m\e_i e_i})(\e), (\msf{E}_if_x)(\e) \big)^q\Bigg)^{\frac{1}{q}}\\&\le \sum_{s=1}^m \Bigg(\frac{1}{2^n}\sum_{i=1}^n\sum_{\e\in\{-1,1\}^n}\sum_{x\in\Z_{4m}^n} d_X \big( (\msf{E}_i f_{x-2s\e_ie_i})(\e), (\msf{E}_if_{x-2(s-1)\e_ie_i})(\e) \big)^q\Bigg)^{\frac{1}{q}}\\
&=m\Bigg(\frac{1}{2^n} \sum_{i=1}^n\sum_{\e\in\{-1,1\}^n}\sum_{x\in\Z_{4m}^n} d_X \big( (\msf{E}_i f_{x-2\e_ie_i})(\e), (\msf{E}_if_{x})(\e) \big)^q\Bigg)^{\frac{1}{q}}.
\end{split}
\end{equation}
where in the last step we used translation invariance on $\Z_{4m}^n$. Due to Lemma~\ref{lem:identityEi}, the estimate~\eqref{eq:m triangle} is the same as
\begin{equation}\label{eq:use lemma}
\begin{split}
\Bigg(\frac{1}{2^n}\sum_{i=1}^n&\sum_{\e\in\{-1,1\}^n}\sum_{x\in\Z_{4m}^n} d_X\big( (\msf{E}_i f_{x+2m\e_i e_i})(\e), (\msf{E}_if_x)(\e) \big)^q\Bigg)^{\frac{1}{q}}\\&\le m\Bigg(\frac{1}{2^n}\sum_{i=1}^n\sum_{\e\in\{-1,1\}^n}\sum_{x\in\Z_{4m}^n} d_X \big( (\msf{E}_i f_{x})(\e_1,\ldots,\e_{i-1},-\e_i), (\msf{E}_i f_x)(\e)\big)^q\Bigg)^{\frac{1}{q}}.
\end{split}
\end{equation}
Each summand in the right hand side of~\eqref{eq:use lemma} can be bounded using the triangle inequality in $(X,d_X)$ as follows.
\begin{equation}\label{use independent of i}
\begin{split}
d_X \big( (\msf{E}_i f_{x})(\e_1,\ldots,\e_{i-1},-\e_i),& (\msf{E}_i f_x)(\e)\big)\le d_X\big((\msf{E}_if_x)(\e_1,\ldots,\e_{i-1},-\e_i),(\msf{E}_{i-1}f_x)(\e)\big)+d_X\big((\msf{E}_{i-1}f_x)(\e),(\msf{E}_if_x)(\e)\big)\\
&= d_X\big((\msf{E}_if_x)(\e_1,\ldots,\e_{i-1},-\e_i),(\msf{E}_{i-1}f_x)(\e_1,\ldots,\e_{i-1},-\e_i)\big)
+d_X\big((\msf{E}_{i-1}f_x)(\e),(\msf{E}_if_x)(\e)\big),
\end{split}
\end{equation}
where the final step of~\eqref{use independent of i} holds because $(\msf{E}_{i-1}f_x)(\e)$ depends only on the variables $\e_1,\ldots,\e_{i-1}$. A substitution of~\eqref{use independent of i} into~\eqref{eq:use lemma}  together with an application of the triangle inequality in $L_q$ gives that
\begin{equation}\label{eq:use lemma + triangle}
\Bigg(\frac{1}{2^n}\sum_{i=1}^n\sum_{\e\in\{-1,1\}^n}\sum_{x\in\Z_{4m}^n} d_X\big( (\msf{E}_i f_{x+2m\e_i e_i})(\e), (\msf{E}_if_x)(\e) \big)^q\Bigg)^{\frac{1}{q}}\le 2m\Bigg(\frac{1}{2^n}\sum_{i=1}^n\sum_{\e\in\{-1,1\}^n}\sum_{x\in\Z_{4m}^n} d_X\big((\msf{E}_{i-1}f_x)(\e),(\msf{E}_if_x)(\e)\big)^q\Bigg)^{\frac{1}{q}}.
\end{equation}

For a fixed $x\in\Z_{4m}^n$, Proposition \ref{prop:pisier} applied to the martingale $\{\msf{E}_i f_x\}_{i=0}^n$ (on $\{-1,1\}^n$) gives the estimate
$$
\frac{1}{2^n}\sum_{i=1}^n\sum_{\e\in\{-1,1\}^n} d_X\big((\msf{E}_{i-1}f_x)(\e),(\msf{E}_if_x)(\e)\big)^q\le \frac{\beta^q}{2^n}\sum_{\e\in\{-1,1\}^n} d_X\big( f(x+\e), f(x)\big)^q.
$$
Due to~\eqref{eq:use lemma + triangle}, we therefore have
\begin{equation}\label{eq:second term all done}
\Bigg(\frac{1}{2^n}\sum_{i=1}^n\sum_{\e\in\{-1,1\}^n}\sum_{x\in\Z_{4m}^n} d_X\big( (\msf{E}_i f_{x+2m\e_i e_i})(\e), (\msf{E}_if_x)(\e) \big)^q\Bigg)^{\frac{1}{q}}\le 2\beta m \Bigg(\frac{1}{2^n}\sum_{\e\in\{-1,1\}^n}\sum_{x\in\Z_{4m}^n} d_X\big( f(x+\e), f(x)\big)^q\Bigg)^{\frac{1}{q}}.
\end{equation}
The desired estimate~\eqref{eq:main'} now follows by substituting~\eqref{eq:first term done} and~\eqref{eq:second term all done} into~\eqref{eq:use triangle in Lq}.
\end{proof}

\section{The definitions of metric cotype with $\ell_\infty$ edges and sign edges coincide}\label{rem:equivcotype}

Let $(X,d_X)$ be an arbitrary metric space. For every $q\in [1,\infty)$ and $m,n\in \N$ denote by  $\mathcal{C}_{q,m,n}^{\{-1,0,1\}}(X,d_X)$ the infimum over those $\mathcal{C}\in (0,\infty)$ such that for every $f:\Z_{2m}^n\to X$ we have
\begin{equation}\label{eq:MN13 def}
\Bigg(\sum_{i=1}^n\sum_{x\in \Z_{2m}^n} d_X\big(f(x+me_i),f(x)\big)^q \Bigg)^{\frac{1}{q}}\le \mathcal{C} m\Bigg(\frac{1}{3^n}\sum_{\e\in \{-1,0,1\}^n}\sum_{x\in \Z_{2m}^n} d_X\big(f(x+\e),f(x)\big)^q \Bigg)^{\frac{1}{q}}.
\end{equation}
Analogously, denote by $\mathcal{C}_{q,m,n}^{\{-1,1\}}(X,d_X)$ the infimum over those $\mathcal{C}_*\in (0,\infty)$ such that for every $f:\Z_{2m}^n\to X$ we have
\begin{equation}\label{eq:pm1 def}
\Bigg(\sum_{i=1}^n\sum_{x\in \Z_{2m}^n} d_X\big(f(x+me_i),f(x)\big)^q \Bigg)^{\frac{1}{q}}\le \mathcal{C}_* m \Bigg(\frac{1}{2^n}\sum_{\e\in \{-1,1\}^n}\sum_{x\in \Z_{2m}^n} d_X\big(f(x+\e),f(x)\big)^q \Bigg)^{\frac{1}{q}}.
\end{equation}

The estimate~\eqref{eq:MN13 def} corresponds to the original definition of metric cotype in~\cite{MN08}, while here we considered the variant~\eqref{eq:pm1 def}, in which the averaging in the right hand side is over the $2^n$ possible sign vectors in $\{-1,1\}^n$ rather than over the $3^n$ possible $\ell_\infty$-edges in $\{-1,0,1\}^n$. We will now show that these definitions are essentially equivalent, up to universal constant factors, by establishing the two estimates in~\eqref{eq:cotype equivalence} below, which hold for every metric space $(X,d_X)$, every $q\in [1,\infty)$ and every $m,n\in \N$. This confirms a prediction of~\cite[Section~5.2]{NS16}, where a special case was treated. Despite this fact, one should note that~\eqref{eq:MN13 def} is to some extent a more natural inequality than~\eqref{eq:pm1 def}, because the vectors $\{-1,0,1\}^n$ generate $\Z_{2m}^n$ while the vectors $\{-1,1\}^n$  do not. Therefore, unlike the right hand side of~\eqref{eq:pm1 def}, the right hand side of~\eqref{eq:MN13 def} corresponds to an "$L_q$ metric gradient" on a connected (Cayley) graph. We shall next establish that
\begin{equation}\label{eq:cotype equivalence}
\mathcal{C}_{q,m,n}^{\{-1,0,1\}}(X,d_X)\le 6^{\frac{1}{q}}\max_{k\in \n}\mathcal{C}_{q,m,k}^{\{-1,1\}}(X,d_X)\qquad\mathrm{and}\qquad \mathcal{C}_{q,2m,n}^{\{-1,1\}}(X,d_X)\le 2\mathcal{C}_{q,m,n}^{\{-1,0,1\}}(X,d_X).
\end{equation}
Thus, using the assumptions and notation of Theorem~\ref{thm:main}, its conclusion implies that also $\mathcal{C}_{q,2m,n}^{\{-1,0,1\}}(X,d_X)\lesssim n^{1/q}+\beta m$.

The proof of the first inequality in~\eqref{eq:cotype equivalence} is via the following simple reasoning; see also \cite[Lemma~2.7]{MN08}. Denote
\begin{equation}\label{eq:choose our C}
\mathcal{C}_*\eqdef \max_{k\in \n}\mathcal{C}_{q,m,k}^{\{-1,1\}}(X,d_X).
\end{equation}
If $A\subset \n$, then consider $\Z_{2m}^A$ to be a subset of $\Z_{2m}^n$, by identifying $y\in \Z_{2m}^A$ with $\sum_{i\in A} y_i e_i\in \Z_{2m}^n$. For $A\subset \n$ and $w\in \Z_{2m}^{\n\setminus A}$, define an auxiliary function $f_{A,w}:\Z_{2m}^A\to X$ by setting $f_{A,w}(y)=f(y+w)$ for every $y\in \Z_{2m}^A$. Recalling~\eqref{eq:choose our C}, an application of~\eqref{eq:pm1 def} to $f_{A,w}$ (with $n$ replaced by $|A|$)  gives the following estimate.
\begin{equation}\label{eq:for fixed A,w}
\begin{split}
\sum_{i\in A} \sum_{y\in \Z_{2m}^A}& d_X\big(f(y+w+me_i),f(y+w)\big)^q=\sum_{i\in A} \sum_{y\in \Z_{2m}^A} d_X\big(f_{A,w}(y+me_i),f_{A,w}(y)\big)^q\\&\le \frac{\mathcal{C}_*^q}{2^{|A|}} m^q \sum_{\d\in \{-1,1\}^A} \sum_{y\in \Z_{2m}^A} d_X\big(f_{A,w}(y+\d),f_{A,w}(y)\big)^q=\frac{\mathcal{C}_*^q}{2^{|A|}} m^q
\sum_{\d\in \{-1,1\}^A} \sum_{y\in \Z_{2m}^A} d_X\big(f(y+w+\d),f(y+w)\big)^q.
\end{split}
\end{equation}
It remains to observe that
\begin{equation}\label{eq:3n to 2n}
\begin{split}
\sum_{A\subset \n}\sum_{w\in \Z_{2m}^{\n\setminus A}}&2^{|A|}\sum_{i\in A} \sum_{y\in \Z_{2m}^A} d_X\big(f(y+w+me_i),f(y+w)\big)^q=
\sum_{i=1}^n \Bigg( \sum_{\substack{A\subseteq\{1,\ldots,n\}\\ i \in A}} 2^{|A|} \Bigg)\sum_{x\in \Z_{2m}^n} d_X\big(f(x+me_i),f(x)\big)^q\\&
=\sum_{i=1}^n \left( \sum_{k=1}^n {n-1\choose k-1}2^k\right)\sum_{x\in \Z_{2m}^n} d_X\big(f(x+me_i),f(x)\big)^q=
\frac{3^{n-1}}{2}\sum_{i=1}^n \sum_{x\in \Z_{2m}^n} d_X\big(f(x+me_i),f(x)\big)^q,
\end{split}
\end{equation}
and
\begin{equation}\label{eq:sum according to support}
\sum_{A\subset \n}\sum_{w\in \Z_{2m}^{\n\setminus A}}\sum_{\d\in \{-1,1\}^A} \sum_{y\in \Z_{2m}^A} d_X\big(f(y+w+\d),f(y+w)\big)^q=\sum_{\e\in \{-1,0,1\}^n}\sum_{x\in \Z_{2m}^n} d_X\big(f(x+\e),f(x)\big)^q.
\end{equation}
By multiplying~\eqref{eq:for fixed A,w} by $2^{|A|}$, summing the resulting bound over all $A\subset \n$ and $w\in \Z_{2m}^{\n\setminus A}$, and using the identities~\eqref{eq:3n to 2n} and~\eqref{eq:sum according to support}, we thus obtain the first inequality in~\eqref{eq:cotype equivalence}. Note that this deduction was entirely combinatorial and did not use the triangle inequality, but the proof of the second inequality in~\eqref{eq:cotype equivalence} does use (a modicum of) geometry.

For the second inequality in~\eqref{eq:cotype equivalence}, write $\mathcal{C}\eqdef\mathcal{C}_{q,m,n}^{\{-1,0,1\}}(X,d_X)$. Fix $f:\Z_{4m}^n\to X$. For each $\eta\in \{0,1\}^n$ define an auxiliary  function $\phi_\eta:\Z_{2m}^n\to X$ by setting $\phi_\eta(y)=f(2y+\eta)$ for every $y\in \Z_{2m}^n$ (for this, one should note that the mapping $y\mapsto 2y$ is well-defined as a mapping from $\Z_{2m}^n$ to $\Z_{4m}^n$). An application of~\eqref{eq:MN13 def} to each of the $2^n$ functions $\{\phi_{\eta}\}_{\eta\in \{0,1\}}$ gives
\begin{equation}\label{eq:to average eta}
\begin{split}
\sum_{i=1}^n \sum_{y\in \Z_{2m}^n}&d_X\big(f(2y+\eta+2me_i),f(2y+\eta)\big)^q=\sum_{i=1}^n \sum_{y\in \Z_{2m}^n}d_X\big(\phi_\eta(y+me_i),\phi_\eta(y)\big)^q\\&\le \frac{\mathcal{C}^q}{3^n} m^q \sum_{\e\in \{-1,0,1\}^n} \sum_{y\in \Z_{2m}^n} d_X\big(\phi_\eta(y+\e),\phi_\eta(y)\big)^q=\frac{\mathcal{C}^q}{3^n} m^q \sum_{\e\in \{-1,0,1\}^n} \sum_{y\in \Z_{2m}^n} d_X\big(f(2y+\eta+2\e),f(2y+\eta)\big)^q.
\end{split}
\end{equation}
The mapping $(y,\eta)\mapsto 2y +\eta$ is a bijection between
$\Z_{2m}^n \times \{0,1\}^n$ and $\Z_{4m}^n$.
So, by summing~\eqref{eq:to average eta} over $\eta\in \{0,1\}^n$ we get
\begin{equation}\label{eq:averaged cal C}
\sum_{i=1}^n \sum_{x\in \Z_{4m}^n}d_X\big(f(x+2me_i),f(x)\big)^q
\le \frac{\mathcal{C}^q}{3^n}\sum_{\e\in \{-1,0,1\}^n} \sum_{x\in \Z_{4m}^n}d_X\big(f(x+2\e),f(x)\big)^q.
\end{equation}

For every $A\subset \n$ and $v\in \Z_{2m}^n$, denote its restriction to $\Z_{2m}^A$ by $v_A=\sum_{i\in A} v_i e_i$. Observe that
\begin{equation}\label{eq:rewrite with restriction}
\sum_{\e\in \{-1,0,1\}^n} \sum_{x\in \Z_{4m}^n}d_X\big(f(x+2\e),f(x)\big)^q=\sum_{A\subset \n} \frac{1}{2^{n-|A|}}\sum_{\d\in \{-1,1\}^n}\sum_{x\in \Z_{4m}^n} d_X\big(f(x+2\d_A),f(x)\big)^q.
\end{equation}
For every fixed $(x,\d)\in \Z_{4m}^n\times \{-1,1\}^n$ and every fixed $A\subset \n$, the triangle inequality in $(X,d_X)$ combined with the  convexity of the mapping $t\mapsto t^q$ on $[0,\infty)$ gives that
\begin{equation}\label{eq:triangle on and off A}
\begin{split}
d_X\big(f(x+2\d_A),f(x)\big)^q&\le 2^{q-1}d_X\big(f(x+2\d_A),f(x+\d)\big)^q+2^{q-1}d_X\big(f(x+\d),f(x)\big)^q\\&= 2^{q-1}d_X\big(f(x+\d),f(x+\d+\d_A-\d_{\n\setminus A})\big)^q+2^{q-1}d_X\big(f(x+\d),f(x)\big)^q.
\end{split}
\end{equation}
Summing~\eqref{eq:triangle on and off A} over $x\in \Z_{4m}^n$, while keeping $\d\in \{-1,1\}^n$ and $A\subset \n$ fixed and using translation invariance, gives
\begin{equation}\label{eq:delta A}
\sum_{x\in \Z_{4m}^n}d_X\big(f(x+2\d_A),f(x)\big)^q\le 2^{q-1}\sum_{x\in \Z_{4m}^n}d_X\big(f(x),f(x+\d_A-\d_{\n\setminus A})\big)^q+2^{q-1}\sum_{x\in \Z_{4m}^n}d_X\big(f(x+\d),f(x)\big)^q.
\end{equation}
Since for a fixed $A\subset \n$, if $\d\in \{-1,1\}^n$ is equi-distributed over $\{-1,1\}^n$, then so is $\d_A-\d_{\n\setminus A}$, by~\eqref{eq:delta A} we have
$$
\sum_{\d\in \{-1,1\}^n}\sum_{x\in \Z_{4m}^n}d_X\big(f(x+2\d_A),f(x)\big)^q\le 2^q\sum_{\d\in \{-1,1\}^n}\sum_{x\in \Z_{4m}^n}d_X\big(f(x+\d),f(x)\big)^q.
$$
A substitution of this bound into~\eqref{eq:rewrite with restriction} gives that
\begin{equation}\label{eq:3n2n}
\begin{split}
\sum_{\e\in \{-1,0,1\}^n} \sum_{x\in \Z_{4m}^n}d_X\big(f(x+2\e),f(x)\big)^q&\le 2^q \Bigg(\sum_{k=1}^n \frac{\binom{n}{k}}{2^{n-k}}\Bigg) \sum_{\d\in \{-1,1\}^n}\sum_{x\in \Z_{4m}^n}d_X\big(f(x+\d),f(x)\big)^q\\&=\frac{2^q3^n}{2^n} \sum_{\e\in \{-1,1\}^n}\sum_{x\in \Z_{4m}^n}d_X\big(f(x+\e),f(x)\big)^q.
\end{split}
\end{equation}
By substituting~\eqref{eq:3n2n} into~\eqref{eq:averaged cal C}, we thus obtain the second inequality in~\eqref{eq:cotype equivalence}.

\section{Remarks on the barycentric condition}\label{sec:prelim}

 Fix $q,\beta>0$ and suppose that $(X,d_X)$ is a $q$-barycentric metric space with constant $\beta$, with respect to the barycenter map  $\mathfrak{B}:\ms{P}_X^{<\infty}\to X$.  If $a,b\in X$ are distinct points, then two applications of the definition~\eqref{eq:pbarycentric} to the probability measure  $\mu_{a,b}=(\d_a+\d_b)/2$   shows that $d_X(\mathfrak{B}(\mu_{a,b}),a)^q,d_X(\mathfrak{B}(\mu_{a,b}),b)^q\le d_X(a,b)^q/2$; for this, we do not need the second term in the left hand side of~\eqref{eq:pbarycentric}. Hence, $d_X(a,b)\le d_X(\mathfrak{B}(\mu_{a,b}),a)+d_X(\mathfrak{B}(\mu_{a,b}),b)\le 2^{1-1/q} d_X(a,b)$, and therefore $q\ge 1$. This is the reason why we assume $q\ge 1$ throughout, thus permitting the use of convexity in various steps.

We do not know if there exists a $q$-barycentric metric space for some $q\in [1,2)$, but we have the following statement.
\begin{proposition} \label{prop:laakso}
Suppose that $q\in [1,2)$ and $(X,d_X)$ is a non-singleton  $q$-barycentric metric space. Then there is a metric $\rho:X\times X\to [0,\infty)$ and $\theta\in (0,1)$ such that the metric spaces $(X,d_X)$ and $(X,\rho^\theta)$ are bi-Lipschitz equivalent to each other.
\end{proposition}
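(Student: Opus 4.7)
Plan: First, observe that the hypothesis that $(X, d_X)$ is non-singleton $q$-barycentric forces $q > 1$: for $q = 1$, applying \eqref{eq:pbarycentric} to $\mu = \frac{1}{2}(\delta_a + \delta_b)$ with distinct $a, b \in X$ and $x = a$, and using the triangle inequality $d_X(m, a) + d_X(m, b) \geq d_X(a, b)$ where $m = \mathfrak{B}(\mu)$, yields $d_X(m, a) \leq \frac{\beta - 1}{2\beta} d_X(a,b)$ and, symmetrically, $d_X(m, b) \leq \frac{\beta - 1}{2\beta} d_X(a, b)$. Summing and using the triangle inequality again gives $d_X(a, b) \leq \frac{\beta - 1}{\beta} d_X(a, b)$, which is absurd for $a \neq b$. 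So we may assume $q \in (1, 2)$.

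It will suffice to exhibit some $s > 1$ (depending on $q$ and $\beta$) such that the symmetric function $D: X \times X \to [0, \infty)$ defined by $D(x, y) := d_X(x, y)^s$ is bi-Lipschitz equivalent to a genuine metric $\rho$ on $X$, for then with $\theta := 1/s \in (0, 1)$ we have $\rho^\theta \asymp D^\theta = d_X$, as required. The natural candidate is the Frink-type chain infimum
\[
\rho(x, y) := \inf\Bigl\{\sum_{i=1}^{n} d_X(x_{i-1}, x_i)^s : n \in \N,\ x = x_0, x_1, \ldots, x_n = y\Bigr\},
\]
which is automatically a pseudometric satisfying the triangle inequality by construction and the upper bound $\rho \leq D$.

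The crux of the argument is the reverse \emph{chain inequality}
\[
d_X(x_0, x_n)^s \lesssim_{q, \beta} \sum_{i=1}^{n} d_X(x_{i-1}, x_i)^s,
\]
uniformly in the chain length $n$, for some suitable $s > 1$. Such an inequality must fail in any genuinely geodesic metric space: partitioning a geodesic segment of length $d$ into $n$ equal subsegments makes the right-hand side proportional to $d^s/n^{s-1} \to 0$ for $s > 1$. So the argument must exploit the $q$-barycentric hypothesis essentially, leveraging the observation already noted earlier in this paper that a $q$-barycentric space with $q < 2$ contains no geodesic segment.

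My plan for establishing the chain inequality is to construct, from an arbitrary finite chain $x_0, \ldots, x_n$ in $X$, a nonlinear $X$-valued martingale whose successive increments encode the chain edges, and then apply Pisier's martingale cotype inequality (Proposition \ref{prop:pisier}) to extract the desired lower bound on $\sum d_X(x_{i-1}, x_i)^s$ in terms of $d_X(x_0, x_n)^s$. The principal obstacle is to design the filtration and the martingale so that Proposition \ref{prop:pisier} delivers a bound uniform in the chain length, together with a simultaneous choice of $s > 1$ for which the telescoping $s$-sum controls the endpoint distance. A combinatorial alternative to pursue in parallel is to iteratively refine the chain by inserting barycentric midpoints, exploiting the dyadic contraction $\max\{d_X(m, a), d_X(m, b)\} \leq \bigl(\beta^q/(\beta^q + 1)\bigr)^{1/q} d_X(a, b)$ that follows from applying \eqref{eq:pbarycentric} to $\mu = (\delta_a + \delta_b)/2$, and propagating the estimate through the resulting dyadic refinement. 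Once the chain inequality at some $s > 1$ is in hand, $\rho$ is nondegenerate on distinct points and hence a genuine metric bi-Lipschitz equivalent to $D = d_X^s$, and the proposition follows with $\theta = 1/s$.
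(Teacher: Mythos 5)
Your opening observation, that no non-singleton space can be $1$-barycentric, is correct and a pleasant extra remark (the paper only notes $q\ge 1$ in Section~\ref{sec:prelim}). The Frink-type chaining idea for the remaining case $q\in(1,2)$ is also a sensible strategy to try, and you correctly identify that everything hinges on the chain inequality
$d_X(x_0,x_n)^s\lesssim_{q,\beta}\sum_{i=1}^n d_X(x_{i-1},x_i)^s$ at some $s>1$, uniformly in $n$. But that inequality is precisely where the proposal has a genuine gap: you explicitly acknowledge that you have not designed the martingale or propagated the dyadic-midpoint refinement, and neither of the two routes you sketch has an evident path to the conclusion. Pisier's inequality (Proposition~\ref{prop:pisier}) gives an \emph{upper} bound on a $q$-sum of martingale increments in terms of a terminal variance; to lower-bound $\sum d_X(x_{i-1},x_i)^s$ in terms of $d_X(x_0,x_n)^s$ you would need a reverse-direction statement, and moreover you would need to pass from the fixed exponent $q$ of the barycentric hypothesis to an auxiliary exponent $s>1$, an additional degree of freedom that Proposition~\ref{prop:pisier} does not provide. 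The midpoint-insertion alternative faces a similar difficulty: the dyadic contraction factor $\bigl(\beta^q/(\beta^q+1)\bigr)^{1/q}$ tends to $1$ as $\beta\to\infty$, and nothing in the proposal indicates how to turn that (arbitrarily weak) contraction into a scale-invariant chain bound. So what you have is a plan with the crucial estimate missing, not a proof.

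The paper's proof proceeds by an entirely different route that sidesteps the chain inequality: it invokes Laakso's de-snowflaking theorem (Theorem~7.2 in the reference attributed to Tyson--Wu, denoted [TW05] in the paper). That theorem provides a dichotomy: if $(X,d_X)$ is \emph{not} bi-Lipschitz to a snowflake, then for every $\e>0$ one can glue a scaled copy of $X$ to a Euclidean interval $[-1,1]$ so that every point of the interval is $\e$-close to $X$. Feeding such near-interval configurations into the $q$-barycentric inequality~\eqref{eq:pbarycentric} and letting $\e\to0$ yields $2s^q/\beta^q\le(1+s)^q+(1-s)^q-2\lesssim s^2$ for all $s\in(0,1]$, which forces $q\ge2$. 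This is a proof by contradiction whose essential geometric content — the uniform chain inequality you are after — is encapsulated in the black-box appeal to Laakso's theorem. If you wish to pursue your direct construction, you would in effect have to re-derive the relevant implication of Laakso's theorem for this particular class of spaces, which is substantial additional work; as it stands, the proposal does not establish the proposition.
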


It follows in particular that a metric space $(X,d_X)$ as in Proposition~\ref{prop:laakso} cannot contain any rectifiable curve. Thus, any $q$-barycentric metric space which contains a geodesic segment must satisfy $q\ge 2$ (this is so, in particular, for geodesic metric spaces such as Alexandrov spaces). Note that there do exist barycentric metric spaces that do not contain any rectifiable curve, as exhibited by, say, the real line equipped with the metric given by $\sqrt{|s-t|}$ for all $s,t\in \R$, which is $4$-barycentric (since the real line itself is $2$-barycentric).

\begin{proof}[Proof of Proposition~\ref{prop:laakso}]
By a "de-snowflaking" result of Laakso~\cite[Theorem~7.2]{TW05}, if for no metric $\rho:X\times X\to [0,\infty)$ and $\theta\in (0,1)$  the desired conclusion holds, then for each $\e\in (0,1)$ there would be a metric $d_\e$ on the disjoint union $X\sqcup [-1,1]$ and (a scaling factor) $\lambda_\e\in (0,\infty)$ with the following properties. Firstly, $d_\e(s,t)=|s-t|$ for all $s,t\in [-1,1]$. Secondly,  $d_\e(x,y)=\lambda_\e d_X(x,y)$ for all $x,y\in X$. Finally, for all $s\in [-1,1]$ there exists $\sigma(s,\e)\in X$ such that $d_\e(\sigma(s,\e),s)\le \e$.

Suppose that $(X,d_X)$ is $q$-barycentric  with constant $\beta$ with respect to the barycenter map  $\mathfrak{B}:\ms{P}_X^{<\infty}\to X$. Denote
$$
\forall\, s\in [\e,1],\qquad \mu_\e^s=\frac12\d_{\sigma(-s,\e)}+\frac12 \d_{\sigma(s,\e)}\in  \ms{P}_X^{<\infty}.
$$
Since $q\ge 1$, we can use convexity to bound from below the second term in the left hand side of~\eqref{eq:pbarycentric} when $\mu=\mu_\e^s$ by
\begin{align}\label{eq:second term jensen}
\begin{split}
\Bigg(\int_X d_X\big(\mathfrak{B}(\mu_\e^s),y\big)^q \diff\mu_\e^s(y) \Bigg)^{\frac{1}{q}}&\ge \frac{d_X\big(\mathfrak{B}(\mu_\e^s),\sigma(-s,\e)\big)+d_X\big(\mathfrak{B}(\mu_\e^s),\sigma(s,\e)\big)}{2}\ge \frac{d_X\big(\sigma(s,\e),\sigma(-s,\e)\big)}{2}\\
&=\frac{d_\e\big(\sigma(s,\e),\sigma(-s,\e)\big)}{2\lambda_\e}\ge \frac{d_\e(-s,s)-d_\e\big(-s,\sigma(-s,\e)\big)-d_\e\big(s,\sigma(s,\e)\big)}{2\lambda_\e}\ge \frac{s-\e}{\lambda_\e}.
\end{split}
\end{align}
A substitution of~\eqref{eq:second term jensen} into two applications of~\eqref{eq:pbarycentric} gives the estimates
\begin{equation}\label{eq:use bary for -1}
d_X \big(\mathfrak{B}(\mu_\e^s),\sigma(-1,\e)\big)^q+\frac{(s-\e)^q}{(\beta\lambda_\e)^q}
\le \frac{d_X \big(\sigma(-1,\e),\sigma(-s,\e)\big)^q+d_X \big(\sigma(-1,\e),\sigma(s,\e)\big)^q}{2},
\end{equation}
and
\begin{equation}\label{eq:use bary for 1}
d_X \big(\mathfrak{B}(\mu_\e^s),\sigma(1,\e)\big)^q+\frac{(s-\e)^q}{(\beta\lambda_\e)^q}
\le \frac{d_X \big(\sigma(1,\e),\sigma(-s,\e)\big)^q+d_X \big(\sigma(1,\e),\sigma(s,\e)\big)^q}{2}.
\end{equation}
By averaging~\eqref{eq:use bary for -1} and~\eqref{eq:use bary for 1} and using convexity ($q\ge 1$) followed by the triangle inequality, we see that
\begin{align}\label{eq:average the bary uses geo}
\begin{split}
&\!\!\!\!\!\!\!\!\!\!\!\!\!\!\!\!\!\!\!\!\!\!\!\!\!\!\!\frac{d_X \big(\sigma(-1,\e),\sigma(-s,\e)\big)^q+d_X \big(\sigma(-1,\e),\sigma(s,\e)\big)^q+d_X \big(\sigma(1,\e),\sigma(-s,\e)\big)^q+d_X \big(\sigma(1,\e),\sigma(s,\e)\big)^q}{4}\\
&\ge \Bigg(\frac{d_X \big(\mathfrak{B}(\mu_\e^s),\sigma(-1,\e)\big)+d_X \big(\mathfrak{B}(\mu_\e^s),\sigma(1,\e)\big)}{2}\Bigg)^q+\frac{(s-\e)^q}{(\beta\lambda_\e)^q}\ge \frac{d_X \big(\sigma(-1,\e),\sigma(1,\e)\big)^q}{2^q}+\frac{(s-\e)^q}{(\beta\lambda_\e)^q}.
\end{split}
\end{align}
Next, we have
$$
d_X \big(\sigma(-1,\e),\sigma(1,\e)\big)=\frac{d_\e \big(\sigma(-1,\e),\sigma(1,\e)\big)}{\lambda_\e} \ge \frac{d_\e(-1,1) -d_\e \big(-1,\sigma(-1,\e)\big)-d_\e \big(1,\sigma(1,\e)\big)}{\lambda_\e}\ge \frac{2(1-\e)}{\lambda_\e}.
$$
Also,
$$
d_X \big(\sigma(-1,\e),\sigma(-s,\e)\big)=\frac{d_\e \big(\sigma(-1,\e),\sigma(-s,\e)\big)}{\lambda_\e}\le \frac{d_\e(-1,-s)+d_\e\big(-1,\sigma(-1,\e)\big)+d_\e(-s,\sigma(-s,\e)\big)}{\lambda_\e}\le \frac{1-s+2\e}{\lambda_\e}.
$$
Analogously, $d_X \big(\sigma(-1,\e),\sigma(s,\e)\big),d_X \big(\sigma(1,\e),\sigma(-s,\e)\big)\le (1+s+2\e)/\lambda_\e$ and $d_X \big(\sigma(1,\e),\sigma(s,\e)\big)\le (1-s+2\e)/\lambda_\e$. A substitution of these estimates into~\eqref{eq:average the bary uses geo} yields the  bound $2(1-\e)^q+2(s-\e)^q/\beta^q\le (1-s+2\e)^q+(1+s+2\e)^q$, which holds for every $0<\e\le s\le 1$. By taking  $\e\to 0$, we see that $2s^q/\beta^q\le (1+s)^q+(1-s)^q-2\lesssim s^2$. Hence necessarily $q\ge 2$.
\end{proof}

\bibliography{hadamard}
\bibliographystyle{abbrv}

\end{document}